\DeclareDocumentCommand{\najib}{som}{\todo[color=blue!20, \IfBooleanT{#1}{inline}, \IfValueT{#2}{#2}]{N: #3}}
\DeclareDocumentCommand{\erik}{som}{\todo[color=green!20, \IfBooleanT{#1}{inline}, \IfValueT{#2}{#2}]{E: #3}}
\declaretheorem[numberwithin = section]{theorem}
\declaretheorem[sibling = theorem]{proposition}
\declaretheorem[sibling = theorem]{corollary}
\declaretheorem[sibling = theorem]{lemma}
\declaretheorem[sibling = theorem, style = definition]{definition}
\declaretheorem[sibling = theorem, style = definition]{notation}
\declaretheorem[sibling = theorem, style = remark]{example}
\declaretheorem[sibling = theorem, style = remark]{remark}
\newtheorem{thmx}{Theorem}
\NewDocumentCommand{\bcirc}{}{\mathbin{\bar{\circ}}}
\NewDocumentCommand{\hotimes}{}{\mathbin{\hat{\otimes}}}
\NewDocumentCommand{\bZ}{}{\mathbb{Z}}
\NewDocumentCommand{\bN}{}{\mathbb{N}}
\NewDocumentCommand{\bQ}{}{\mathbb{Q}}
\NewDocumentCommand{\sM}{}{\mathsf{M}}
\NewDocumentCommand{\sN}{}{\mathsf{N}}
\NewDocumentCommand{\sP}{}{\mathsf{P}}
\NewDocumentCommand{\sQ}{}{\mathsf{Q}}
\NewDocumentCommand{\sH}{}{\mathsf{H}}
\NewDocumentCommand{\sI}{}{\mathsf{I}}
\NewDocumentCommand{\Com}{}{\mathsf{Com}}
\NewDocumentCommand{\Lie}{}{\mathsf{Lie}}
\NewDocumentCommand{\Ass}{}{\mathsf{Ass}}
\NewDocumentCommand{\Pois}{}{\mathsf{Pois}}
\NewDocumentCommand{\sE}{}{\mathsf{E}}
\NewDocumentCommand{\cH}{}{\mathcal{H}}
\NewDocumentCommand{\cQ}{}{\mathcal{Q}}
\NewDocumentCommand{\cF}{}{\mathcal{F}}
\NewDocumentCommand{\cP}{}{\mathcal{P}}
\NewDocumentCommand{\SymG}{}{\mathfrak{S}}
\NewDocumentCommand{\Mod}{}{\mathrm{Mod}}
\NewDocumentCommand{\ModK}{}{\Mod_{\Bbbk}}
\NewDocumentCommand{\gModK}{}{\mathrm{g}\ModK}
\NewDocumentCommand{\triv}{}{\mathrm{triv}}
\NewDocumentCommand{\sgn}{}{\mathrm{sgn}}
\DeclareMathOperator{\ch}{ch}
\DeclareMathOperator{\chtwo}{ch}
\DeclareMathOperator{\diag}{diag}
\DeclareMathOperator{\Ind}{Ind}
\DeclareMathOperator{\Res}{Res}
\DeclareMathOperator{\Rep}{\mathcal{R}}
\DeclareMathOperator{\End}{End}
\DeclareMathOperator{\Sat}{\mathcal{S}}
\DeclareMathOperator{\Aut}{Aut}
\DeclareMathOperator{\IA}{\mathsf{IA}}
\DeclareMathOperator{\GL}{GL}
\DeclareMathOperator{\Hom}{Hom}
\DeclareMathOperator{\tr}{tr}
\DeclareMathOperator*{\colim}{colim}
\title{Plethysm for characters of relative operads and props}
\author{Najib Idrissi\thanks{Université Paris Cité and Sorbonne Université, CNRS, IMJ-PRG, F-75013 Paris, France.} \and Erik Lindell\thanks{Institut for Matematiske Fag, Københavns Universitet, Universitetsparken 5, 2100
København Ø, Denmark}}
\date{June 2025}
\begin{document}

\maketitle

\begin{abstract}
  We investigate the relationship between symmetric functions and the representation theory of operads, relative operads, and props.
  We extend the classical character map for symmetric sequences to relative bisymmetric sequences and symmetric bimodules.
  We introduce new operations on symmetric functions, the relative plethysm and the (connected) box product, which model via the character map the composition product of relative operads and the box product of prop(erad)s.
  As applications, we include the computation of characters for stable twisted cohomology of automorphism groups of free groups and the Albanese cohomology of the $\IA$-automorphism group.
\end{abstract}

\tableofcontents

\section{Introduction}
An operad is an object which consist of abstract multilinear operations, with some number of inputs and one output, which can be composed together in a natural way.
Each operad has an underlying symmetric sequence, i.e.\ sequences of representations of symmetric groups.
It is well-known that, in characteristic zero, the ring $\Rep(\SymG_*)$ of isomorphism classes of symmetric sequences is isomorphic to the completion of the ring $\Lambda$ of symmetric functions, i.e., symmetric polynomials in infinitely many variables.
The isomorphism, which can be viewed as a kind of decategorification, is realized by the character map $\ch:\Rep(\SymG_*) \to \hat{\Lambda}$.

An operad is a monoid in the category of symmetric sequences under a certain monoidal structure, called the \emph{composition product}.
Under the character map, the composition product on the source corresponds to an operation called the \emph{plethysm} of symmetric functions.
The plethysm is easy to characterize in terms of explicit generators of $\Lambda$, making it practical for calculations.
In Section~\ref{sec:symmetric-functions-operads} below, we review the setup and proof of this correspondence in more detail.
The results of this paper concern similar correspondences for some generalizations of operads, namely \emph{relative operads}, \emph{props}, and properads.

\paragraph{Relative operads}

The first generalizations of operads that we consider are relative operads, also called ``Swiss-Cheese type operads.''
They encode operations with inputs of two kinds, or ``colors'', and one output.
Their underlying objects are relative bisymmetric sequences, that is, pairs $(\bar{\sN},\sN)$, where $\sN$ is a symmetric sequence,
and $\bar{\sN} = (\sN(m,n))_{m,n\in\bN}$ is a bisymmetric sequence, i.e.\ a family of $(\SymG_m\times\SymG_n)$-modules,
which we think of spaces with $m$ inputs of one color and $n$ of a different color.
Just like regular operads, relative operads are monoids for a certain composition product:
\begin{equation*}
  (\bar{\sM},\sM)\bcirc(\bar{\sN},\sN) \coloneqq \bigl(\bar{\sM}\bcirc(\bar{\sN},\sN),\sM\circ \sN\bigr).
\end{equation*}

The decategorification of relative bisymmetric sequences is the ring of relative bisymmetric functions, which is the product ring $\hat{\Lambda}_{x,y}\times\hat{\Lambda}_x$,
The character map
$$\chtwo:\Rep(\SymG_*\times\SymG_*)\to \hat{\Lambda}_{x,y}$$
from the ring of isomorphism classes of bisymmetric sequences is an isomorphism.
In Section~\ref{sec:rel-op} below, we define a ``relative plethysm''
\[\bcirc:\hat{\Lambda}_{x,y}\times(\hat{\Lambda}_{x,y}\times\hat{\Lambda}_x)\to \hat{\Lambda}_{x,y}.\]
Similarly to the classical plethysm, the relative plethysm is easily characterized in terms of explicit generators and therefore practical for calculations.
We prove that it is the character of the relative composition product:

\begin{thmx}[Theorem~\ref{thm:relative-composition-plethysm}]
  Given a bisymmetric sequence $\bar{\sM}$ and a relative bisymmetric sequence $(\bar{\sN}, \sN)$, we have that
  \begin{equation*}
    \chtwo\bigl(\bar{\sM} \bcirc (\bar{\sN}, \sN)\bigr) = \chtwo(\bar{\sM}) \bcirc \bigl( \chtwo(\bar{\sN}), \ch(\sN)\bigr).
  \end{equation*}
\end{thmx}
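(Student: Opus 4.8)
The plan is to imitate the proof of the classical correspondence recalled in Section~\ref{sec:symmetric-functions-operads}, reducing the identity in the first variable $\bar{\sM}$ to a short list of algebra generators. First, both sides are additive and continuous in $\bar{\sM}$: on the right this is immediate from the explicit generator-wise definition of the relative plethysm, and on the left because $-\bcirc(\bar{\sN},\sN)$ is, arity by arity, a direct sum of linear functors of the modules $\bar{\sM}(m,n)$. Second, both sides are multiplicative in $\bar{\sM}$ for the external (Day) tensor product $\otimes$ of bisymmetric sequences. On the right, $F\mapsto F\bcirc\bigl(\chtwo(\bar{\sN}),\ch(\sN)\bigr)$ is a continuous $\bQ$-algebra homomorphism, again by the description of $\bcirc$ on the generators $p_k(x)$, $p_k(y)$ of $\hat{\Lambda}_{x,y}$. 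On the left, reading bisymmetric sequences as bifunctors, $\bar{\sM}\bcirc(\bar{\sN},\sN)$ is the bifunctor $(V,W)\mapsto\bar{\sM}\bigl(\sN(V),\bar{\sN}(V,W)\bigr)$, whence the distributivity law $(\bar{\sM}_1\otimes\bar{\sM}_2)\bcirc(\bar{\sN},\sN)\cong\bigl(\bar{\sM}_1\bcirc(\bar{\sN},\sN)\bigr)\otimes\bigl(\bar{\sM}_2\bcirc(\bar{\sN},\sN)\bigr)$, which together with the monoidality $\chtwo(\bar{\sM}_1\otimes\bar{\sM}_2)=\chtwo(\bar{\sM}_1)\,\chtwo(\bar{\sM}_2)$ from the setup gives the claim. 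Since $\chtwo$ restricts to a continuous ring isomorphism (for the external product and multiplication) from virtual bisymmetric sequences onto $\hat{\Lambda}_{x,y}$, it then suffices to verify the identity when $\bar{\sM}$ is concentrated in bidegree $(m,0)$ with $\chtwo(\bar{\sM})=p_m(x)$, and when $\bar{\sM}$ is concentrated in bidegree $(0,n)$ with $\chtwo(\bar{\sM})=p_n(y)$.

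In the first case $\bar{\sM}$ is supported in the closed colour only, so $\bar{\sM}\bcirc(\bar{\sN},\sN)$ depends only on $\sN$ and coincides, placed in open-arity $0$, with the ordinary composition product of the underlying symmetric sequence of $\bar{\sM}$ with $\sN$; since the restriction of $\bcirc$ to the subalgebra $\hat{\Lambda}_x\subseteq\hat{\Lambda}_{x,y}$ is precisely the classical plethysm by $\ch(\sN)$, the required equality is the classical plethysm theorem of Section~\ref{sec:symmetric-functions-operads}. In the second case $\bar{\sM}$ is supported in the open colour only, and $\bar{\sM}\bcirc(\bar{\sN},\sN)$ is the composition product of a symmetric sequence with the \emph{bisymmetric} sequence $\bar{\sN}$ — equivalently, the image of $\bar{\sN}$ under the Schur functor of $\bar{\sM}$ formed in the $\Bbbk$-linear symmetric monoidal category of symmetric sequences (in the closed arity) rather than in $\ModK$ — and one must identify its character with the plethystic substitution of $\chtwo(\bar{\sN})$ into $p_n(y)$, which is exactly how $\bcirc$ acts on $\hat{\Lambda}_y$.

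This last point is where I expect the real work to lie: it amounts to a version of the plethysm theorem of Section~\ref{sec:symmetric-functions-operads} with coefficients in $\mathrm{sSeq}$ rather than in $\ModK$. I would obtain it either directly from the level of generality at which $\chtwo$ and $\bcirc$ are constructed in Section~\ref{sec:rel-op}, or otherwise by rerunning the classical argument verbatim with $\ch\colon\Rep(\SymG_*)\to\hat{\Lambda}_x$ in the role of a dimension/character: that argument uses only that the target is a $\Bbbk$-linear symmetric monoidal category equipped with an additive, continuous, monoidal character valued in a $\lambda$-ring, all of which $(\mathrm{sSeq},\ch)$ provides. Granting this, the two families of generators are settled, and reassembling via additivity, continuity and the two multiplicativity statements yields the theorem for arbitrary $\bar{\sM}$.
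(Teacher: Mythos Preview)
Your strategy differs from the paper's. The paper gives a single direct proof by diagonalization: fixing $r,s\ge 1$, it diagonalizes the endomorphisms $\cF_{\bar{\sN}}((x),(y))$ and $\cF_{\sN}((y))$, uses the identification $\cF_{\bar{\sM}\bcirc(\bar{\sN},\sN)}=\cF_{\bar{\sM}}(\cF_{\bar{\sN}}(-,-),\cF_{\sN}(-))$, and reads off the trace. No reduction to generators is needed; the argument handles all $\bar{\sM}$ at once in a few lines.

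Your reduction to the generators $p_m(x), p_n(y)$ is a legitimate alternative, but you have the two cases swapped. In the paper's conventions (Definition~\ref{def:rel-op}) the first index of $\bar{\sM}(m,n)$ is fed by $\bar{\sN}$ and the second by $\sN$; correspondingly $p_k(x)\bcirc(\bar g,g)$ depends only on $\bar g$ while $p_k(y)\bcirc(\bar g,g)$ depends only on $g$ (see Equations~\eqref{eq:pn-circ-x} and~\eqref{eq:pn-circ-y}). Hence it is the $p_n(y)$ case that collapses to the classical plethysm theorem (Proposition~\ref{prop:composition-plethysm}), and the $p_m(x)$ case that is the ``plethysm with coefficients in symmetric sequences'' step. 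Once the labels are corrected, your outline is sound.

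That said, the reduction buys you less than you might hope. Your harder case---composing a (virtual) $\SymG_m$-representation with the bisymmetric sequence $\bar{\sN}$ along the first index---still requires exactly the diagonalization/trace argument the paper uses to prove the whole theorem directly; you would simply be running it after an extra reduction step rather than before. The paper's one-shot computation is therefore the shorter route, though your approach has the merit of isolating precisely where the new content beyond Proposition~\ref{prop:composition-plethysm} lives.
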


\paragraph{Props and properads}

A different generalization of operads are \emph{props} (sometimes written ``props''), which encode multilinear operations with several inputs and outputs.
Props were initially introduced by \textcite{MacLane1965}, though we take here a point of view closer to the operadic one~\cite{Vallette2007}.
Properads are a better behaved version of props, as they are amenable to methods such as Koszul duality.
The underlying objects of props are symmetric bimodules, that is families $(\sM(m,n))_{m,n\in\bN}$ such that $\sM(m,n)$, the space of operations with $n$ inputs and $m$ outputs, is endowed with a left $\SymG_m$-action and right $\SymG_n$-action.
Under the character map, the ring of symmetric bimodules is isomorphic to $\hat{\Lambda}_{x,y}$, the completed ring of bisymmetric functions.

In the first part of Section~\ref{sec:propic-char}, we first deal with a technical issue: the objects of props are actually \emph{saturated} symmetric bimodules, that is, symmetric bimodules that are isomorphic to the ``saturation'' of a symmetric bimodule. We introduce a saturation operation on bisymmetric functions (Definition~\ref{def:sat-char}), and we prove that it corresponds to the saturation of symmetric bimodules under the character map.

In the category of saturated bisymmetric sequences, props are (almost) monoids under the \emph{box product} (see \cite{Vallette2007}).
In the second part of Section~\ref{sec:propic-char}, we introduce a box product (Definition~\ref{def:box-prod}):
\[\boxtimes:\hat{\Lambda}_{x,y}\times\hat{\Lambda}_{x,y}\to\hat{\Lambda}_{x,y}.\]
The construction of the box product was inspired by a similar construction for modular operads appearing in \cite{GetzlerKapranov1998}.
The box product of bisymmetric functions is more complicated to calculate than the plethysm and relative plethysm, but can still be expressed in relatively simple terms on generators and is thus also useful for calculations.
Moreover, we introduce a connected version of the box product (Definition~\ref{def:conn-box-prod}), which is suitable for properads and defines an actual monoidal structure on symmetric bimodules.

\begin{thmx}[Theorem~\ref{thm:box-product-character}, Theorem~\ref{thm:conn-box-product-character}]
  Let $\sM$, $\sN$ be symmetric bimodules.
  The character of their (connected) box product satisfies:
  \begin{align*}
    \chtwo(\sM \boxtimes \sN) & = \chtwo(\sM) \boxtimes \chtwo(\sN), & \chtwo(\sM \boxtimes_c \sN) & = \chtwo(\sM) \boxtimes_c \chtwo(\sN).
  \end{align*}
\end{thmx}

\paragraph{Applications}

The plethysm and its generalizations are not only interesting from the perspective of being a ``decategorification'' of interesting operations on symmetric sequences, relative symmetric sequences and symmetric bimodules, but they are also useful in practice.
Many interesting objects appearing naturally in algebra, geometry and topology are equipped with operadic or prop-structures and using symmetric functions is thereby a tractable way to understand their representation theory.
For example, \textcite{GaroufalidisGetzler2017} used this correspondence to study twisted cohomology of mapping class groups and the Torelli Lie algebra. A similar method was used by \textcite[Section 6]{KupersRandal-Williams2020} to study the character of stable cohomology groups of Torelli groups into irreducible representations of symplectic groups,
by relating this decomposition to the character of a certain symmetric sequence of stable twisted cohomology of the mapping class group. In the final section of this paper, we use our generalizations of the plethysm to obtain similar results.

The second author~\cite{Lindell2022-AutFn} computed the stable rational cohomology groups of $\Aut(F_n)$, where $F_n$ is the free group on $n$ generators,
with certain ``bivariant'' twisted coefficients. These cohomology groups naturally form a prop, whose structure was studied by \textcite{KawazumiVespa2023}.
Combining these results, one obtains a simple description of the prop in terms of two generators. As a symmetric bimodule,
it can be expressed as the saturation of a very simple symmetric bimodule, so in Section~\ref{sec:applications}, we apply our results to calculate its character. We also calculate
the character of the closely related prop associated to the (shift of) the operad $\Com$, which encodes commutative algebras. The structure of this prop was studied by \textcite{EHLVZ2024}.

In \cite{Lindell2024IA}, the second author applied the results of \cite{Lindell2022-AutFn} to study the rational cohomology of $\IA_n$, which is the kernel of the natural map
$\Aut(F_n)\to\GL_n(\bZ)$. In the appendix of \cite{Lindell2024IA}, Katada used the results of the paper to calculate the part of the stable cohomology generated by the first cohomology under the cup product, typically known as the ``Albanese cohomology''.
The stable Albanese cohomology had been previously determined up to degree 3, in work of \textcite{Kawazumi2005-MagnusExpansions}, \textcite{Pettet05} and \textcite{Katada2022}.
Similarly to in \cite{KupersRandal-Williams2020}, one of the results of \cite{Lindell2024IA} relates the decomposition of a certain symmetric sub-bimodule of the stable bivariant twisted cohomology of $\Aut(F_n)$
to the decomposition of the Albanese cohomology into irreducible representations of $\GL_n(\bQ)$. This symmetric bimodule may also be described as the saturation of a
symmetric bimodule whose character is easy to write down by hand, thereby allowing us to use our results to calculate the character of the stable Albanese cohomology in arbitrary degree and thereby determining its decomposition into irreducibles in an efficient way.
In Appendix \ref{sec:appendixA}, we include the character, computed by implementing our results in Mathematica, up to degree 5 (at which point it is already quite unwieldy to write down).

\paragraph{Conventions}

We let $\bN = \{0, 1, \dots\}$ be the set of nonnegative integers.
For $n \in \bN$, we write $\SymG_n$ for the $n$th symmetric group and $\SymG_\infty = \bigcup_{n \in \bN} \SymG_n$.
We fix a commutative ring $\Bbbk$.
We work in the categories $\ModK$ of $\Bbbk$-modules and $\gModK$ of (cohomologically) $\bZ$-graded $\Bbbk$-modules.

For $\lambda:=(\lambda_1\ge\lambda_2\ge\cdots\ge\lambda_l\ge 0)$ a partition, we denote by $|\lambda|:=\lambda_1+\cdots+\lambda_l$
its weight and by $l(\lambda)$ its length, i.e.\ the largest $l$ such that $\lambda_l\neq 0$.
We sometimes write $\lambda \, \dashv \, |\lambda|$ to express that $\lambda$ is a partition of $n = |\lambda|$.

\paragraph{Acknowledgements}

The authors thank Frédéric Han, Dan Petersen, and Thomas Willwacher for useful discussions, and Vladimir Dotsenko, Johan Leray, and Robin Stoll for useful comments.

The authors acknowledge support from project ANR-22-CE40-0008 SHoCoS.
N.I.\ also acknowledges support from project ANR-20-CE40-0016 HighAGT
and contributes to the IdEx University of Paris ANR-18-IDEX-0001.
E.L. was supported by the Knut and Alice Wallenberg Foundation through grant
no. 2022.0278. He is also grateful to the Copenhagen Centre for Geometry and Topology for their hospitality during the writing of this paper.

\section{Symmetric functions and operads}\label{sec:symmetric-functions-operads}

In this section, we recall the basics ot the theory of symmetric functions and plethysm, symmetric sequences and composition product, and the link between the two.
We refer to~\cite{Macdonald1995} or~\cite[Chap.~7]{Stanley1999} for symmetric functions and~\cite{LodayVallette2012} for operads.
Characters of (cyclic and modular) operads are studied in~\cite{GinzburgKapranov1994,GetzlerKapranov1998}.

\subsection{Symmetric functions}

For $k\ge 1$, consider the graded polynomial ring $\bZ[x_1,\dots,x_k]$, where each generator has degree one. The symmetric group $\SymG_k$ acts on this graded ring by permuting the generators. A \emph{symmetric polynomial} in $k$ variables is an element of the invariant subring
\[ \Lambda_k:=\bZ[x_1,\dots,x_k]^{\SymG_k}. \]
We write $\Lambda_k^n$ for the degree $n$ part of this graded ring. There is a graded ring homomorphism $\rho_k:\Lambda_{k+1}\to\Lambda_k$, defined by $\rho(x_{k+1}) = 0$ and $\rho(x_i) = x_i$ for  $1 \leq i \leq k$.

\begin{definition}
  The \emph{ring of symmetric functions} is the graded ring
  \[ \Lambda:=\lim(\cdots\to\Lambda_{k+1}\xrightarrow{\rho_k}\Lambda_k\to\cdots\to \Lambda_1). \]
  We denote the degree $n$ part of $\Lambda$ by $\Lambda^n$ and let $\hat{\Lambda}$ denote the completion of $\Lambda$ with respect to the induced filtration.
  As graded $\bZ$-modules, $\Lambda$ and $\hat{\Lambda}$ decompose as:
  \[ \Lambda=\bigoplus_{n\ge 0}\Lambda^n \subsetneq \hat{\Lambda}=\prod_{n\ge 0}\Lambda^n. \]
\end{definition}
Since we will deal with \emph{graded} representations of symmetric groups, we will also consider the rings $\Lambda((\hbar))$ and $\hat{\Lambda}((\hbar))$ of formal Laurent series.
Note that these rings are bigraded, with $\Lambda^n$ in bidegree $(n,0)$ and $\hbar$ in bidegree $(0,1)$.

\begin{notation}\label{not:lambda-x-y}
  Since we will sometimes change the names of the variables used to define symmetric functions, we will call the previous ring $\Lambda_x$ in some places.
  The ring $\Lambda_y$ will be defined similarly except we replace each $x_i$ by $y_i$.
\end{notation}

\begin{notation}\label{not:f-alpha}
  Any symmetric function $f \in \hat{\Lambda}$ can be written uniquely as an infinite sum $f = \sum_\alpha f_\alpha x^\alpha$, where $\alpha = (\alpha_1, \alpha_2, \dots)$ is a multi-index, $\alpha_i \geq 0$ and all but finitely many $\alpha_i$ vanish, each $f_\alpha$ is in $\bZ$, and $x^\alpha = \prod_{i=1}^\infty x_i^{\alpha_i}$.
  Since $f$ is symmetric, we have $f_\alpha = f_{\sigma \cdot \alpha}$ for all $\alpha$ and $\sigma \in \SymG_\infty$.
  Moreover, $f$ belongs to $\Lambda \subset \hat{\Lambda}$ if and only if there exists $d \in \bN$ such that if $|\alpha| \coloneqq \sum_i \alpha_i$ is bigger than $d$, then $f_\alpha$ vanishes.
\end{notation}

Let us introduce some classical elements of $\Lambda$.
For $n \in \bN$, the $n$th \emph{elementary symmetric function} $e_n \in \Lambda^n$, the $n$ \emph{complete symmetric function} $h_n \in \Lambda^n$, and the $n$th \emph{power sum} $p_n \in \Lambda^n$ are given by:
\begin{align}
  e_n
   & \coloneqq \sum_{i_1 < \dots < i_n} x_{i_1} \dots x_{i_n};
   & h_n
   & \coloneqq \sum_{i_1 \leq \dots \leq i_n} x_{i_1} \dots x_{i_n};
   & p_n
   & \coloneqq \sum_i x_i^n = x_1^n + x_2^n + \dots
\end{align}
For example, $e_0 = h_0 = p_0 = 1$, $e_1 = h_1 = p_1 = x_1 + x_2 + \dots$, and and $p_2 = h_2 - e_2$.

Another classical family of elements of $\Lambda$ is given by Schur functions.
These admit several different definitions; let us give a combinatorial definition in terms of Young tableaux.

\begin{definition}
  Given a partition $\lambda$, a \emph{Young tableau} of shape $\lambda$ is obtained by
  filling the boxes of the Young diagram corresponding to $\lambda$ with non-negative integers
  (where the same number may appear several times, or not at all).
  We say that a Young tableau is \emph{semistandard} if its entries weakly increase along each row and increase strictly down each column; see Figure~\ref{fig:y-tab} for an example.
\end{definition}

\begin{figure}[htbp]
  \begin{minipage}[b]{.59\textwidth}
    \caption{A semistandard Young tableau corresponding to the partition $\lambda = (5,3,2)$.}
  \end{minipage}
  \begin{minipage}[b]{.39\textwidth}
    \centering
    \begin{tikzpicture}
      \matrix [matrix of math nodes, row sep=-0.4pt, column sep=-0.4pt, nodes={draw, minimum size=5mm}]
      {
        1 & 1 & 3 & 4 & 4 \\
        3 & 5 & 5         \\
        4 & 6             \\
      };
    \end{tikzpicture}
  \end{minipage}
  \label{fig:y-tab}
\end{figure}

\begin{example}
  Given $n\in\bN$, $\lambda$ a partition of $n$ and $T$ a semistandard Young tableau of shape $\lambda$,
  we can associate to it a multi-index $\alpha_T=(\alpha_1,\alpha_2,\ldots)$ where $\alpha_i$ is the number of occurrences of $i$ in $T$. We define the \emph{Schur function} $s_\lambda\in\Lambda^n$ by
  \[ s_\lambda:=\sum_{T} x^{\alpha_T}, \]
  where the sum is taken over all semi-standard Young tableaux of shape $\lambda$.
\end{example}

\begin{proposition}\label{prop:mult-basis}
  The family $(s_\lambda)_{|\lambda|=n}$ is an additive basis for $\Lambda^n$.
  Both families $(e_n)_{n \geq 1}$ and $(h_n)_{n \geq 1}$ are sets of algebraically independent generators of the ring $\Lambda$.
  Moreover, $(p_n)_{n \geq 1}$ is a set of algebraically independent generators for the ring $\Lambda \otimes \bQ$.
\end{proposition}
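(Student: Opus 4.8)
All three assertions are classical (see \cite[Ch.~I]{Macdonald1995}); I would organize the proof around dominance-triangular change-of-basis matrices. Write $m_\lambda \coloneqq \sum_\alpha x^\alpha$ for the \emph{monomial symmetric function}, the sum running over the distinct multi-indices $\alpha$ whose nonzero entries form a permutation of the parts of $\lambda$. By Notation~\ref{not:f-alpha}, every $f \in \Lambda^n$ is uniquely a finite $\bZ$-linear combination of the $m_\lambda$ with $\lambda \dashv n$, so $(m_\lambda)_{\lambda \dashv n}$ is a $\bZ$-basis of $\Lambda^n$; in particular $\Lambda^n$ is free of rank $p(n) \coloneqq \#\{\lambda \dashv n\}$. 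For the Schur functions, grouping the semistandard tableaux of shape $\lambda$ by content gives $s_\lambda = \sum_{\mu \dashv n} K_{\lambda\mu}\, m_\mu$ with $K_{\lambda\mu} \in \bN$ the number of semistandard tableaux of shape $\lambda$ and content $\mu$; the key point is that $K_{\lambda\mu} = 0$ unless $\mu \trianglelefteq \lambda$ in dominance order, with $K_{\lambda\lambda} = 1$. This holds because in a semistandard filling the cells with entry $\le i$ occupy an initial segment of each column, hence lie in the first $i$ rows, so $\mu_1 + \dots + \mu_i \le \lambda_1 + \dots + \lambda_i$; equality for all $i$ forces every entry in row $j$ to be $j$. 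Thus $(K_{\lambda\mu})$ is unitriangular over $\bZ$ and $(s_\lambda)_{\lambda\dashv n}$ is a $\bZ$-basis of $\Lambda^n$.

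For $(e_n)_{n\ge1}$, write $e_\lambda \coloneqq e_{\lambda_1}e_{\lambda_2}\cdots$ and let $\lambda'$ be the conjugate partition. Multiplying out the product, one checks directly that the dominance-maximal monomial occurring in $e_{\lambda'}$ is $x^\lambda$, with coefficient $1$, so $e_{\lambda'} = m_\lambda + \sum_{\mu} c_{\lambda\mu}\, m_\mu$ where the sum runs over $\mu \trianglelefteq \lambda$ with $\mu \ne \lambda$ and $c_{\lambda\mu} \in \bN$. Hence $(e_{\lambda'})_{\lambda\dashv n}$ — equivalently, as conjugation permutes partitions of $n$, the family $(e_\lambda)_{\lambda\dashv n}$ — is a $\bZ$-basis of $\Lambda^n$; letting $n$ vary, the monomials $e_\lambda$ form a $\bZ$-basis of $\Lambda = \bigoplus_n \Lambda^n$, which says precisely that the $e_n$ are algebraically independent and generate $\Lambda$. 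For $(h_n)_{n\ge1}$, put $E(t) \coloneqq \sum_{i\ge0} e_i t^i = \prod_k(1 + x_k t)$ and $H(t) \coloneqq \sum_{j\ge0} h_j t^j = \prod_k(1 - x_k t)^{-1}$; then $E(-t)H(t) = 1$, which gives $\sum_{i=0}^n (-1)^i e_i h_{n-i} = 0$ for all $n \ge 1$. Solving recursively expresses each $h_n$ as a $\bZ$-polynomial in $e_1,\dots,e_n$, and symmetrically each $e_n$ in terms of the $h_i$, so $\bZ[h_1,h_2,\dots] = \bZ[e_1,e_2,\dots] = \Lambda$. Since this recursion is symmetric, the $\bZ$-algebra endomorphism $\omega$ of $\Lambda$ with $\omega(e_n) = h_n$ (well-defined as the $e_n$ are free generators) satisfies $\omega(h_n) = e_n$ by induction on $n$; hence $\omega^2 = \mathrm{id}$, so $\omega$ is an automorphism and carries the free generating set $(e_n)$ to the free generating set $(h_n)$.

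For $(p_n)_{n\ge1}$ over $\bQ$, logarithmic differentiation of $H(t)$ yields $H'(t) = H(t)\sum_{n\ge1} p_n t^{n-1}$, i.e.\ Newton's identities $n\, h_n = \sum_{i=1}^n p_i\, h_{n-i}$ for $n \ge 1$. Read one way, this writes $p_n$ as a $\bZ$-polynomial in $h_1,\dots,h_n$; read the other way, dividing by $n$, it writes $h_n$ as a $\bQ$-polynomial in $p_1,\dots,p_n$ and $h_1,\dots,h_{n-1}$. So $\bQ[p_1,p_2,\dots] = \Lambda \otimes \bQ$. Finally, consider the graded surjection $\bQ[P_1,P_2,\dots] \twoheadrightarrow \Lambda\otimes\bQ$, $P_n \mapsto p_n$, where $\deg P_n = n$: both sides are graded $\bQ$-vector spaces with Hilbert series $\prod_{n\ge1}(1-t^n)^{-1} = \sum_{n\ge0} p(n)\, t^n$ — the target by the first paragraph — so the map is an isomorphism in each degree, in particular injective, and the $p_n$ are algebraically independent.

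The steps involving generating functions, the involution $\omega$, and the Hilbert-series comparison are routine; the genuine content, and the main obstacle, is the two dominance-triangularity statements: that the Kostka matrix $(K_{\lambda\mu})$ is unitriangular and that $e_{\lambda'}$ has leading term $m_\lambda$. Both come down to the elementary combinatorial observation that, in a semistandard tableau, small entries are confined to the top rows (respectively, that the smallest-index variables are forced into each elementary factor when one tries to build $x^\lambda$), but some care is needed to set up the induction that yields $K_{\lambda\lambda} = 1$ and the unit leading coefficient.
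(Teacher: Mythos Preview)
Your proof is correct and follows the standard route from \cite[Ch.~I]{Macdonald1995}: monomial basis, Kostka-unitriangularity for the Schur functions, Gale--Ryser-type triangularity of $e_{\lambda'}$ against $m_\lambda$, the involution $\omega$ via $E(-t)H(t)=1$, and Newton's identities plus a Hilbert-series count for the power sums. The paper itself does not prove this proposition at all --- it is stated as a classical fact in the background section, with Macdonald and Stanley cited as references --- so there is no ``paper's own proof'' to compare against; your write-up is exactly the kind of argument those references contain.
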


Using this proposition, we can define an involution
\begin{equation}\label{eq:omega}
  \omega : \Lambda \to \Lambda, \quad e_n \mapsto h_n.
\end{equation}
We have $\omega(s_\lambda)=s_{\lambda^T}$, where $\lambda^T$ denotes the transpose of the partition $\lambda$, and $\omega(p_n)=(-1)^{n-1}p_n$.
This involution is useful in our applications below.

Let us now turn our attention to the plethysm.

\begin{definition}
  Given $f,g \in \Lambda_x \coloneqq \Lambda$, the (one-variable) plethysm $f \circ g \in \Lambda_x$ is defined as follows.
  Let $g = \sum_\alpha g_\alpha x^\alpha$ (see Notation \ref{not:f-alpha}) and suppose that $g_\alpha \geq 0$ for all $\alpha$.
  Introduce temporary variables $z_i$ (only well defined up to permutation) such that, for a placeholder variable $t$, one has:
  \begin{equation}
    \prod_i (1 + z_i t) = \prod_\alpha (1 + x^\alpha t)^{g_\alpha};
  \end{equation}
  Then the plethysm is given by:
  \begin{equation}
    (f \circ g)(x_1, x_2, \dots) \coloneqq f(z_1, z_2, \dots).
  \end{equation}
\end{definition}

Concretely, $f \circ g$ is obtained by substituing the monomials that appear in $g$ (with multiplicity) into $f(x_1, x_2, \dots)$.
To fully define the plethysm (i.e., when $g$ has negative coefficients) we use the next propositions.

\begin{proposition}
  The plethysm is an endomorphism of rings in the first variable.
  Moreover, it intertwines with the involution $\omega$, i.e., for $f \in \Lambda^m$ and $g \in \Lambda^n$,
  \begin{equation}\label{eq:omega-plethysm}
    \omega(f \circ g) =
    \begin{cases}
      f \circ \omega(g),         & \text{if } n \text{ is even}; \\
      \omega(f) \circ \omega(g), & \text{otherwise}.
    \end{cases}
  \end{equation}
\end{proposition}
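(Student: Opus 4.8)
The plan is to establish the ring-homomorphism statement first --- which simultaneously completes the definition of the plethysm for $g$ with negative coefficients --- and then to deduce the $\omega$-intertwining formula from it by reducing to power sums. For the first part I would treat $g = \sum_\alpha g_\alpha x^\alpha$ with all $g_\alpha \ge 0$ to begin with. Since $f$ is symmetric, $f(z_1, z_2, \dots)$ depends only on the elementary symmetric functions $e_n(z_1, z_2, \dots)$, and these are precisely the coefficients of $t^n$ in $\prod_\alpha (1 + x^\alpha t)^{g_\alpha} \in \hat{\Lambda}_x[[t]]$; as $\Lambda = \bZ[e_1, e_2, \dots]$ by Proposition~\ref{prop:mult-basis}, the map $f \mapsto f(z_1, z_2, \dots)$ is nothing but the ring homomorphism $\Lambda \to \hat{\Lambda}_x$ determined by $e_n \mapsto e_n(z_1, z_2, \dots)$. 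So $- \circ g$ is a ring homomorphism, $1 \circ g = 1$, and $f \circ g \in \Lambda_x$ whenever $g \in \Lambda$ (a degree estimate on the coefficients of the product). For arbitrary $g$, the point is that $\prod_\alpha (1 + x^\alpha t)^{g_\alpha}$ still makes sense in $\hat{\Lambda}_x[[t]]$, since $(1 + u)^c = \sum_{k \ge 0} \binom{c}{k} u^k$ has integer coefficients for every $c \in \bZ$; I would define $e_n \circ g$ to be its $t^n$-coefficient and extend multiplicatively to a ring endomorphism of $\Lambda$ (resp.\ $\hat{\Lambda}$). This agrees with the previous construction when $g$ has nonnegative coefficients, and it is a ring endomorphism by construction.

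The computational input for the second part is the identity $p_a \circ g = \psi^a(g)$, where $\psi^a \colon \Lambda \to \Lambda$ is the ring endomorphism sending each $x_i$ to $x_i^a$, equivalently $p_b \mapsto p_{ab}$. For $g$ with nonnegative coefficients this drops out of taking logarithms on both sides of $\prod_i (1 + z_i t) = \prod_\alpha (1 + x^\alpha t)^{g_\alpha}$ and comparing coefficients of $t^a$; for general $g$ it follows by applying $- \circ g$ to the classical identity $\sum_n e_n t^n = \exp\bigl(\sum_{a \ge 1} \tfrac{(-1)^{a-1}}{a} p_a t^a\bigr)$ and using $\sum_\alpha g_\alpha \log(1 + x^\alpha t) = \sum_{a \ge 1} \tfrac{(-1)^{a-1}}{a} \psi^a(g) t^a$, which recovers the defining product for $\sum_n (e_n \circ g) t^n$.

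Now I would fix homogeneous $g$ of degree $n$ and write $g = \sum_\mu b_\mu p_\mu$, the sum over partitions $\mu$ of $n$ with $b_\mu \in \bQ$, so that $\omega(g) = \sum_\mu b_\mu \epsilon_\mu p_\mu$ with $\epsilon_\mu = (-1)^{n - \ell(\mu)}$. Using $p_a \circ g = \psi^a(g) = \sum_\mu b_\mu p_{a\mu}$, the relation $\omega(p_c) = (-1)^{c-1} p_c$, and $\ell(a\mu) = \ell(\mu)$, a short computation yields
\begin{gather*}
  \omega(p_a \circ g) = \sum_\mu b_\mu (-1)^{an - \ell(\mu)} p_{a\mu}, \qquad
  p_a \circ \omega(g) = \sum_\mu b_\mu (-1)^{n - \ell(\mu)} p_{a\mu}, \\
  \omega(p_a) \circ \omega(g) = \sum_\mu b_\mu (-1)^{a - 1 + n - \ell(\mu)} p_{a\mu}.
\end{gather*}
Comparing parities, $\omega(p_a \circ g) = p_a \circ \omega(g)$ exactly when $n(a-1)$ is even and $\omega(p_a \circ g) = \omega(p_a) \circ \omega(g)$ exactly when $(a-1)(n-1)$ is even; hence for $n$ even the first identity holds for every $a$, and for $n$ odd the second holds for every $a$. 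To pass from $p_a$ to an arbitrary $f$, I would observe that the three maps $f \mapsto \omega(f \circ g)$, $f \mapsto f \circ \omega(g)$ and $f \mapsto \omega(f) \circ \omega(g)$ are ring endomorphisms of $\Lambda$ (each a composite of the ring homomorphisms $\omega$, $- \circ g$, $- \circ \omega(g)$), hence of $\Lambda \otimes \bQ$, and two ring endomorphisms of $\Lambda \otimes \bQ$ that agree on the polynomial generators $p_1, p_2, \dots$ must coincide.

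I expect the hard part to be purely bookkeeping, of two kinds. First, one must verify that the extension of the plethysm to symmetric functions with negative coefficients is well defined and \emph{integral}: this is where the integrality of the $\binom{c}{k}$ and a (mildly tedious) degree count are used to keep the output in $\Lambda$ rather than only in $\hat{\Lambda}$. Second, one must get the parities in the power-sum computation exactly right, so that the case split is governed by the parity of $n$ alone, and not, say, by the degree of $f$; the exponents $an$ and $a - 1 + n$ appearing above are what make transparent why this is what happens.
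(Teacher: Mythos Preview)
The paper does not actually prove this proposition. It sits in Section~\ref{sec:symmetric-functions-operads}, which is explicitly a review section (``we recall the basics of the theory of symmetric functions and plethysm''), and the statement is given without argument, with Macdonald's book cited at the top of the section as the reference. So there is nothing in the paper to compare your proof against; you are supplying a proof the paper deliberately omits.

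Your argument is correct and follows the standard route one finds in Macdonald: first show that $-\circ g$ is a ring homomorphism, then compute $p_a\circ g=\psi^a(g)$ explicitly, and finally check the $\omega$-identity on the generators $p_a$ by comparing the signs $(-1)^{an-\ell(\mu)}$, $(-1)^{n-\ell(\mu)}$, $(-1)^{a-1+n-\ell(\mu)}$. The parity bookkeeping is done correctly, and the passage from $p_a$ to general $f$ via multiplicativity is sound. One small remark on conventions: the paper extends the plethysm to $g$ with negative coefficients by invoking Proposition~\ref{pro:pn-circ} (i.e., expanding in the $p$-basis and using that $p_n\circ(-)$ is a ring homomorphism), rather than by interpreting the formal product $\prod_\alpha(1+x^\alpha t)^{g_\alpha}$ with negative exponents as you do. Both give the same answer, but if you want to align with the paper's setup you could shorten your first paragraph by simply quoting that proposition instead of redeveloping the extension.
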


\begin{proposition}\label{pro:pn-circ}
  For all $n \geq 0$, the map $g \mapsto p_n \circ g$ is an endomorphism of rings, and:
  \begin{align}
    p_n \circ g & = g(x_1^n, \dots) = g \circ p_n;
                & \forall m \geq 0, \, p_n \circ p_m = p_{mn}.
  \end{align}
\end{proposition}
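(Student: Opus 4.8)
The plan is to compute both sides of each asserted identity explicitly in the range where the combinatorial definition of the plethysm applies — that is, whenever the inner symmetric function has nonnegative coefficients — and then to observe that the general case is purely formal, given the way the plethysm has been extended past that range.

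First I would compute $g \circ p_n$. Since $p_n = \sum_i x_i^n$ has coefficient $1$ on each monomial $x_i^n$ and $0$ on every other monomial, the equation $\prod_i(1 + z_i t) = \prod_\alpha(1 + x^\alpha t)^{g_\alpha}$ that defines the temporary variables becomes $\prod_i(1 + z_i t) = \prod_i(1 + x_i^n t)$, so the unordered family $\{z_i\}$ coincides with $\{x_i^n\}$. Hence $(f \circ p_n)(x_1, x_2, \dots) = f(x_1^n, x_2^n, \dots)$ for every $f \in \Lambda$; in other words $f \mapsto f \circ p_n$ is the substitution ring endomorphism $\psi_n\colon \Lambda \to \Lambda$, $x_i \mapsto x_i^n$, and in particular $p_m \circ p_n = \sum_i x_i^{mn} = p_{mn}$.

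Next I would compute $p_n \circ g$ when $g = \sum_\alpha g_\alpha x^\alpha$ has all $g_\alpha \geq 0$. Applying $\log$ to both sides of $\prod_i(1 + z_i t) = \prod_\alpha(1 + x^\alpha t)^{g_\alpha}$, expanding each logarithm as a power series in $t$, and comparing the coefficients of $t^k$ (which cancels the common factor $(-1)^{k-1}/k$), one obtains
\[ \sum_i z_i^k = \sum_\alpha g_\alpha\, x^{k\alpha} \qquad (k \geq 1), \]
an identity that is legitimate term by term in each fixed multidegree of the $x_i$. Taking $k = n$ and using $(p_n \circ g)(x_1, x_2, \dots) = p_n(z_1, z_2, \dots) = \sum_i z_i^n$ gives $p_n \circ g = \sum_\alpha g_\alpha x^{n\alpha} = g(x_1^n, x_2^n, \dots) = \psi_n(g)$, which is consistent with the first step when $g = p_n$. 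Since the plethysm is extended to inner functions with negative coefficients precisely so that this identity (equivalently: the ring-homomorphism property of the preceding proposition together with the prescribed values on the generators $p_n$) persists, it follows for arbitrary $g \in \Lambda$ that $p_n \circ g = \psi_n(g) = g \circ p_n$, that $g \mapsto p_n \circ g$ equals the ring endomorphism $\psi_n$, and that $p_n \circ p_m = p_{mn}$.

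The only points requiring care are the formal manipulation of the logarithmic generating functions and its degreewise convergence (one may instead invoke the classical Newton-type relation between $\prod_i(1 + z_i t)$ and the power sums $\sum_i z_i^k$), together with pinning down the definitional extension of the plethysm precisely enough that the identification $p_n \circ (-) = \psi_n$ is unambiguous on all of $\Lambda$. Beyond this bookkeeping, the statement is essentially formal, with no genuine difficulty.
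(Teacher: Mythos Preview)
Your proof is correct and follows essentially the same approach as the paper, which does not give a detailed argument but simply remarks that the proposition ``follows immediately from the definition of the plethysm'' and points to Example~\ref{exa:comput-plethysm} as an illustration. Your logarithmic expansion to extract $\sum_i z_i^k = \sum_\alpha g_\alpha x^{k\alpha}$ is a clean way to formalize what the paper leaves implicit (namely that the $z_i$ are, by construction, the monomials of $g$ listed with multiplicity, so $p_n(z_1,z_2,\dots) = \sum_\alpha g_\alpha (x^\alpha)^n$), and your handling of the extension to general $g$ matches the paper's logical order exactly.
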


\begin{example}\label{exa:comput-plethysm}
  Since $h_2(x) = x_1^2 + x_1x_2 + \dots + x_2^2 + \dots$, it follows that
  \begin{equation}
    (p_r \circ h_2)(x) = p_r(x_1^2, x_1x_2, \dots, x_2^2, \dots) = x_1^{2r} + x_1^r x_2^r + \dots + x_2^{2r} + \dots = h_2(x_1^r, x_2^r, \dots).
  \end{equation}
\end{example}

Since $(p_n)_{n \geq 0}$ is a ring basis of $\Lambda \otimes \bQ$, these properties allow to define the plethysm $f \circ g$ even when $g$ has negative coefficients: one can write $g = \sum_\alpha g_\alpha p_\alpha$ and then define $f \circ g \coloneqq \sum_\alpha g_\alpha (f \circ p_\alpha)$.

Moreover, the plethysm extends to an operation $\hat{\Lambda} \times \hat{\Lambda}^{\geq 1} \to \hat{\Lambda}$ (i.e., if the second variable vanishes at $x=0$), by noting that if $f \in \Lambda^k$ and $g \in \Lambda^l$ then $f \circ g$ is in $\Lambda^{kl}$.

Finally, we can extend the plethysm to $\Lambda((\hbar))$ and $\hat{\Lambda}((\hbar))$ as follows: let $g(x,\hbar)=\sum_{k,\alpha}g_{\alpha,k} x^{\alpha}(-\hbar)^{k}$,
where we assume $g_{\alpha,k}\ge 0$ for all $\alpha$ and $k$. We again introduce temporary variables $z_{i,k}$ such that for a placeholder variable $t$, we have
\begin{equation}
  \prod_{i,k}(1+z_{i,k}t)=\prod_{\alpha,k}(1+x^\alpha t)^{g_{\alpha,k}}
\end{equation}
We then define
\begin{equation}
  (f\circ g)(x,\hbar)=f(((-\hbar)^{k}z_{i,k})_{i,k},\hbar).
\end{equation}

It is clear that this extension of the plethysm still satisfies that $(-)\circ g$ is a ring morphism and that $p_k\circ(-)$ is linear. Thus our extension of the plethysm can be characterized by saying that it is $\hbar$-linear in the first variable and setting $f \circ (\hbar g) = \hbar^k(f \circ g)$ for all $f \in \Lambda^k$ and all $g \in \Lambda$.
Note that for $f \circ g$ to be well-defined, we need $g(0)$ to be concentrated in strictly positive $\hbar$-degree~\cite[§2.4.6]{BergstroemDiaconuPetersenWesterland2023}.

In general, computing the plethysm of arbitrary symmetric functions is not an easy task.
Let us note one last property:

\begin{proposition}
  Plethysm is associative: for $f,g,h \in \Lambda$, we have $(f \circ g) \circ h = f \circ (g \circ h)$.
\end{proposition}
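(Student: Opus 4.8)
The plan is to reduce the identity to the power sums, using three facts already available: plethysm is a ring homomorphism in its first variable; for each $n$ the map $p_n \circ (-)$ is a ring homomorphism satisfying $p_n \circ p_m = p_{nm}$ and $p_n \circ f = f(x_1^n, x_2^n, \dots)$ (Proposition~\ref{pro:pn-circ}); and $(p_n)_{n \geq 1}$ is a set of algebra generators of $\Lambda \otimes \bQ$ (Proposition~\ref{prop:mult-basis}). Since $f \circ g$ lies in $\Lambda$ whenever $f, g \in \Lambda$, every plethysm in the statement stays in $\Lambda$, which is torsion-free and hence embeds in $\Lambda \otimes \bQ$; so it suffices to prove the identity after tensoring with $\bQ$.

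First I would fix $g, h \in \Lambda$ and compare the two maps $f \mapsto (f \circ g) \circ h$ and $f \mapsto f \circ (g \circ h)$. The first is the composite of the ring endomorphism $(-) \circ g$ with the ring endomorphism $(-) \circ h$, and the second is the ring endomorphism $(-) \circ (g \circ h)$; both are therefore ring endomorphisms of $\Lambda \otimes \bQ$. Two ring endomorphisms of $\Lambda \otimes \bQ = \bQ[p_1, p_2, \dots]$ that agree on the generators $p_n$ coincide, so it is enough to prove $(p_n \circ g) \circ h = p_n \circ (g \circ h)$ for all $n \geq 1$ (the case $n = 0$ being trivial).

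Next I would fix $n \geq 1$ and $h \in \Lambda$ and regard both sides of this reduced identity as functions of $g$. The map $g \mapsto (p_n \circ g) \circ h$ is the composite of the ring homomorphism $g \mapsto p_n \circ g$ with the ring homomorphism $(-) \circ h$, while $g \mapsto p_n \circ (g \circ h)$ is the composite of $g \mapsto g \circ h$ with $p_n \circ (-)$; both are ring endomorphisms of $\Lambda \otimes \bQ$, so it suffices to check equality on $g = p_m$, $m \geq 1$. Here one computes directly from Proposition~\ref{pro:pn-circ}:
\[
  (p_n \circ p_m) \circ h = p_{nm} \circ h = h(x_1^{nm}, x_2^{nm}, \dots),
\]
and, writing $h'(x_1, x_2, \dots) \coloneqq (p_m \circ h)(x) = h(x_1^m, x_2^m, \dots)$,
\[
  p_n \circ (p_m \circ h) = p_n \circ h' = h'(x_1^n, x_2^n, \dots) = h(x_1^{nm}, x_2^{nm}, \dots).
\]
The two expressions coincide, which closes the double reduction and proves associativity.

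I do not expect a serious obstacle here; the only point requiring care is the bookkeeping in the two successive reductions, namely verifying at each stage that the two maps being compared really are ring endomorphisms of $\Lambda \otimes \bQ$, so that agreement on the $p_n$'s is enough. (One could instead argue directly from the ``substitute the monomials of $g$ with multiplicity'' description of the plethysm when $g$ and $h$ have nonnegative coefficients, and then extend by $\bQ$-linearity in the relevant variable; but the reduction to power sums is cleaner and handles arbitrary coefficients uniformly.)
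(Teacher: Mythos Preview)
Your proof is correct and is the standard reduction-to-power-sums argument. The paper itself does not give a proof of this proposition: it is stated as a known fact in the review section on symmetric functions (with the general reference to \cite{Macdonald1995} and \cite{Stanley1999} at the start of Section~\ref{sec:symmetric-functions-operads}), so there is no proof to compare against. Your argument is exactly the natural one and would fit well as a proof if the paper chose to include one; the later proof of associativity of the \emph{relative} plethysm in Section~\ref{sec:bisym-pairs} follows precisely this pattern (reduce to $p_n(x)$ and $p_n(y)$ in the first variable, then to power sums in the second), so your approach is in the same spirit as what the paper does in the analogous two-variable situation.
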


\subsection{Symmetric sequences and operads}\label{subsec:sym-seq-and-operads}

Next, we turn our attention to the ring of symmetric sequences, which can be viewed as a categorification of the ring of symmetric functions.

\begin{definition}
  A \emph{symmetric sequence} is a family $\sM = \{\sM(n)\}_{n \in \bN}$ of non-negatively graded $\Bbbk$-modules such that each $\sM(n)$ is endowed with an action of $\SymG_n$.
  Such a sequence is said to be of \emph{finite-type} if each $\sM(n)$ is finite-dimensional.
\end{definition}

The (direct) sum $\sM \oplus \sN$ of symmetric sequences $\sM,\sN$ is defined term-wise.
The tensor product $\sM \otimes \sN$ is defined as:
\begin{equation}
  (\sM \otimes \sN)(n) \coloneqq \bigoplus_{k+l=n} \Ind_{\SymG_k \times \SymG_l}^{\SymG_n} \sM(k) \otimes \sN(l).
\end{equation}

The set of isomorphism classes of term-wise finite dimensional symmetric sequences forms a monoid for $\oplus$.
We denote by $\Rep_\Bbbk(\SymG_*)$ (or simply $\Rep(\SymG_*)$ when $\Bbbk = \bZ$) the Grothendieck group of that monoid, which forms a ring for $\otimes$. In addition, this ring has an involution, given by
$(M(n))_{n\in\bN}\mapsto (M(n)\otimes\sgn_n)_{n\bN}$.

Just as the ring of symmetric functions, the ring $\Rep_{\Bbbk}(\SymG_*)$ has an additional product:

\begin{definition}
  The \emph{composition product} of symmetric sequences $\sM,\sN$ is:
  \begin{equation}
    \sM \circ \sN \coloneqq \bigoplus_{r \geq 0} (\sM(r) \otimes \sN^{\otimes r})_{\SymG_r},
  \end{equation}
  where $(-)_{\SymG_r}$ denotes coinvariants under the action of the symmetric group.
  The unit $\sI$ for this operation is given by $\sI(1) = \Bbbk$ and $\sI(n) = 0$ for $n \neq 1$.
\end{definition}

A symmetric sequence $\sM$ induces a (polynomial) functor $\cF_{\sM} : \gModK \to \gModK$ given, for $V \in \gModK$, by:
\begin{equation}
  \cF_{\sM}(V) \coloneqq \bigoplus_{n \in \bN} \bigl( \sM(n) \otimes V^{\otimes n} \bigr)_{\SymG_n},
\end{equation}
We then have a natural isomorphism of endofunctors $\cF_{\sM \circ \sN} \cong \cF_\sM \circ \cF_\sN$.
An \emph{operad} is a monoid in the category of symmetric sequences endowed with the composition product; if $\sM$ is an operad, then $\cF_\sM$ is a monad.

\begin{example}\label{exa:end-x}
  The prototypical example of operad is the \emph{endomorphism operad} $\End_X$ of $X \in \gModK$, defined by $\End_X(n) \coloneqq \Hom(X^n, X)$.
  The monoid structure is given by composition of multivariable maps.
\end{example}

\subsection{Character map}

Finally, let us look at the link between symmetric functions and sequences. This link is provided by the \emph{character map}. To define it, we first make the following definition:

\begin{definition}\label{def:graded-trace}
  Let $V$ be a non-negatively graded $\Bbbk$-vector and $T:V\to V$ a linear operator $T:V\to V$ of degree 0. Its graded \emph{trace} is defined as
  \begin{equation}
    \tr(T) \coloneqq \sum_{d\ge 0}(-\hbar)^d \tilde{\tr}(T_d)\in \Bbbk((\hbar)),
  \end{equation}
  where $T_d:V_d\to V_d$ is the restriction of $T$ to degree $d$ and $\tilde{\tr}(T_d)$ is the usual trace of $T_d$.
\end{definition}

\begin{definition}
  Let $\sM$ be a finite-type symmetric sequence.
  The \emph{character} of $\sM$ is the symmetric function $\ch(\sM)\in\hat{\Lambda}((\hbar))\otimes\Bbbk$ whose projection to $\Lambda_r((\hbar))\otimes\Bbbk$, for $r\ge 1$ is defined as the trace of the linear map $\cF_M(\Bbbk^r)\to\cF_M(\Bbbk^r)$ (with $\Bbbk$ considered as concentrated in degree 0) induced by the diagonal matrix $\diag(x_1,\ldots,x_r)$ acting on $\Bbbk^r$.
\end{definition}

Given a representation $M$ of $\SymG_n$, we can view it as a symmetric sequence by setting $\sM(n) = M$ and $\sM(k) = 0$ for $k \neq n$.
The character of the representation $M$ is then defined as the character of the associated symmetric sequence.
Note that it is homogeneous of polynomial degree $n$ in the $x_i$, and thus belongs to the subspace $\Lambda^n((\hbar)) \otimes \Bbbk \subset \hat{\Lambda}((\hbar)) \otimes \Bbbk$.

\begin{example}\label{ex:char-of-triv-and-sign}
  Let $M = \triv_2$ be the trivial representation of $\SymG_2$.
  Then $\cF_M(\Bbbk^r) \subset (\Bbbk^r)^{\otimes r}$ is spanned by symmetric binary tensors, and $\diag(x_1, \dots, x_r)$ has eigenvalues $\{x_i x_j \mid 1 \leq i, j \leq r\}$ (all with multiplicity one), so that $\ch(\triv_2) = h_2$.
  This generalizes easily: the character of the trivial representation of $\SymG_n$ satisfies $\ch(\triv_n) = h_n$.
  On the other hand, if $\sgn_n$ is the sign representation of $\SymG_n$, then we get $\ch(\sgn_n) = e_n$.
\end{example}

\begin{example}
  Recall that the irreducible representations of $\SymG_n$ are indexed by partitions of weight $n$ (see \cite[Section~4.2]{FultonHarris91}). For a partition $\lambda$, we
  denote the corresponding irreducible representation, called a \emph{Specht module}, by $S^\lambda$. We have $\ch(S^\lambda)=s_\lambda$.
\end{example}

\begin{example}
  Recall the involution $\omega$ from \eqref{eq:omega}.
  Then for a representation $M$ of $\SymG_n$, one has $\omega(\ch(M)) = \ch(M \otimes \sgn_n)$.
\end{example}

\begin{theorem}[{\cite[(7.3)]{Macdonald1995}}]\label{th:iso-char}
  The character map is an isomorphism of rings $\Rep(\SymG_*) \cong \hat{\Lambda}((\hbar))$.
\end{theorem}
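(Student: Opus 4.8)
The plan is to establish two things: that $\ch$ is a homomorphism of rings, and that it is bijective. Both the source and the target decompose compatibly according to polynomial degree: a finite-type symmetric sequence is a family $\{\sM(n)\}_{n\ge 0}$ of graded $\SymG_n$-representations, $\hat\Lambda((\hbar)) = \prod_{n\ge 0}\Lambda^n((\hbar))$, and the character of a $\SymG_n$-representation is homogeneous of polynomial degree $n$, so $\ch$ carries the arity-$n$ summand of $\Rep(\SymG_*)$ into $\Lambda^n((\hbar))$. Bijectivity can therefore be checked one $n$ at a time, whereas the ring structure mixes the degrees in exactly the way multiplication of symmetric functions does.

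For the ring structure, additivity of $\ch$ is immediate, and the unit for $\otimes$ — the symmetric sequence with $\Bbbk$ placed in arity $0$ — has character $1$. For multiplicativity I would use that the polynomial-functor assignment $\sM \mapsto \cF_\sM$ is (strong) monoidal, i.e. that there is a natural isomorphism $\cF_{\sM\otimes\sN}(V) \cong \cF_\sM(V)\otimes\cF_\sN(V)$; this follows from unwinding the definition of the tensor product of symmetric sequences together with the Frobenius-reciprocity identity $\bigl(\Ind_{\SymG_k\times\SymG_l}^{\SymG_n}(W)\otimes V^{\otimes n}\bigr)_{\SymG_n} \cong \bigl(W \otimes V^{\otimes k}\otimes V^{\otimes l}\bigr)_{\SymG_k\times\SymG_l}$. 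Under this isomorphism, the endomorphism of $\cF_{\sM\otimes\sN}(\Bbbk^r)$ induced by $\diag(x_1,\dots,x_r)$ is the tensor product of the endomorphisms induced on $\cF_\sM(\Bbbk^r)$ and $\cF_\sN(\Bbbk^r)$, and since the graded trace of Definition~\ref{def:graded-trace} is multiplicative on tensor products of degree-zero operators — degrees add, so the weights $(-\hbar)^d$ multiply correctly — projecting to $\Lambda_r((\hbar))$ for each $r$ and passing to the inverse limit yields $\ch(\sM\otimes\sN) = \ch(\sM)\ch(\sN)$. (Alternatively one can argue directly from the classical formula for the character of an induced representation.)

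For bijectivity, fix $n$. The degree-$n$ part of $\Lambda$ is free of rank $p(n)$ (the number of partitions of $n$) on the Schur functions $s_\lambda$, $\lambda\dashv n$, by Proposition~\ref{prop:mult-basis}, while the arity-$n$ part of $\Rep(\SymG_*)$ is — up to the ring of $\hbar$-coefficients — free of the same rank on the classes of the Specht modules $S^\lambda$. Since $\ch(S^\lambda) = s_\lambda$, as recorded above, $\ch$ sends a basis to a basis in each degree, hence is an isomorphism of graded abelian groups; combined with the previous paragraph, $\ch$ is an isomorphism of rings. (If one wishes to avoid invoking $\ch(S^\lambda) = s_\lambda$: the image of the ring map $\ch$ contains $h_n = \ch(\triv_n)$ for all $n\ge 1$, and since the $h_n$ are algebraically independent generators of $\Lambda$ by Proposition~\ref{prop:mult-basis}, $\ch$ surjects onto $\Lambda$; in degree $n$ this is a surjection between free $\bZ$-modules of equal finite rank $p(n)$, hence an isomorphism, and one assembles over $n$ and completes.)

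The genuinely non-formal inputs are Proposition~\ref{prop:mult-basis} and the identity $\ch(S^\lambda) = s_\lambda$ (equivalently the Frobenius character formula, or just $\ch(\triv_n) = h_n$), both already available above. The step requiring the most care is the monoidal isomorphism $\cF_{\sM\otimes\sN} \cong \cF_\sM\otimes\cF_\sN$ and the attendant equivariance bookkeeping needed to deduce multiplicativity of $\ch$, together with checking that the graded, $\hbar$-adic bookkeeping matches on the two sides of the claimed isomorphism.
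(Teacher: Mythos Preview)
The paper does not supply its own proof of this theorem; it is stated with a citation to Macdonald and no argument follows. Your proof is the standard one and is correct: multiplicativity of $\ch$ via the natural isomorphism $\cF_{\sM\otimes\sN}\cong\cF_\sM\otimes\cF_\sN$ together with multiplicativity of the graded trace, and bijectivity via the degree-by-degree basis match $S^\lambda\mapsto s_\lambda$ (or your alternative surjectivity-plus-rank-count using $\ch(\triv_n)=h_n$ and Proposition~\ref{prop:mult-basis}). This is essentially the argument sketched in Macdonald's Appendix~A, to which the paper defers.
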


Note that not every symmetric function corresponds to a genuine representation.
For example, $p_2 = h_2 - e_2$ is the character of the virtual representation $\triv_2 \ominus \sgn_2$.

The character map not only preserves the ring structure, but also the additional structure we have defined.
By Example \ref{ex:char-of-triv-and-sign}, we see that under the character map, the involution $\omega$ corresponds precisely to the involution given by tensoring arity-wise by the sign representation, introduced above.
The relationship between plethysm and composition product is given by the following proposition.

\begin{proposition}\label{prop:composition-plethysm}
  For finite-type symmetric sequences $\sM,\sN$ such that $\sN(0)$ is concentrated in positive degree, one has:
  \begin{equation}
    \ch(\sM \circ \sN) = \ch(\sM) \circ \ch(\sN).
  \end{equation}
\end{proposition}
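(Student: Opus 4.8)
The plan is to reduce the claimed identity to a statement about endofunctors of $\gModK$ and the behavior of graded traces under composition of polynomial functors. Recall that the character of a symmetric sequence $\sP$ is, by definition, the collection over $r \ge 1$ of the graded trace of the operator $\diag(x_1,\dots,x_r)$ acting on $\cF_\sP(\Bbbk^r)$. Since everything in sight is $\Lambda_r((\hbar))$-valued for each $r$ and the inverse limit over $r$ recovers $\hat\Lambda((\hbar))$, it suffices to prove the identity after projecting to $\Lambda_r((\hbar)) \otimes \Bbbk$ for each fixed $r$.

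First I would fix $r$ and set $X = \Bbbk^r$ with $D = \diag(x_1,\dots,x_r)$, regarded as an operator on $X$ working over the base ring $\Bbbk[x_1,\dots,x_r]$ (or its localization), and note the natural isomorphism $\cF_{\sM \circ \sN} \cong \cF_\sM \circ \cF_\sN$ from Section~\ref{subsec:sym-seq-and-operads}. The operator $D$ on $X$ induces $\cF_\sN(D)$ on $Y \coloneqq \cF_\sN(X)$, and functoriality gives that the operator on $\cF_\sM(\cF_\sN(X))$ we must trace is $\cF_\sM(\cF_\sN(D))$. The key point is a base-change/eigenvalue argument: $\cF_\sN(D)$ is a degree-$0$ operator on the graded module $Y$, and $\ch(\sN)$ evaluated in $r$ variables is precisely $\tr(\cF_\sN(D))$, which records the (signed, $\hbar$-weighted) multiset of eigenvalues of $\cF_\sN(D)$. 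Then one wants: for any degree-$0$ operator $E$ on a finite-type graded module $Y$ whose graded trace (eigenvalue multiset, with signs) is the symmetric function $g(x_1,x_2,\dots)$ in "variables" given by the eigenvalues, the graded trace of $\cF_\sM(E)$ equals $\ch(\sM)$ evaluated on those eigenvalues, i.e. the plethystic substitution $\ch(\sM) \circ g$. This is exactly the defining recipe for plethysm: $\ch(\sM)$ is a symmetric function, plethysm substitutes the monomials of $g$ (with multiplicity) into it, and the eigenvalues of $\cF_\sM(E)$ are obtained from those of $E$ by the polynomial-functor construction $\bigoplus_n (\sM(n) \otimes (-)^{\otimes n})_{\SymG_n}$, whose effect on eigenvalues is precisely $\ch(\sM)$ applied to the eigenvalue multiset.

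Concretely, to make the last step rigorous I would first treat the ungraded case (forgetting $\hbar$): if $E$ acts diagonalizably on $Y$ with eigenvalues $\zeta_1,\zeta_2,\dots$ (with multiplicity), then for each $n$ the operator induced on $(\sM(n) \otimes Y^{\otimes n})_{\SymG_n}$ has trace equal to $\ch(\sM(n))$ evaluated at $\zeta_1,\zeta_2,\dots$ — this is just the statement that $\ch$ computes the trace on $\cF_{\sM(n)}$, applied with the $x_i$ replaced by the $\zeta_i$, which is legitimate because $\ch(\sM(n))$ is a genuine symmetric polynomial of degree $n$ and the construction is $\Bbbk$-multilinear and natural. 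Summing over $n$ gives $\tr(\cF_\sM(E)) = \ch(\sM)(\zeta_1,\zeta_2,\dots)$. Taking $E = \cF_\sN(D)$, whose eigenvalue multiset is exactly the multiset of monomials appearing in $\ch(\sN)(x_1,\dots,x_r)$, and recalling that $\ch(\sM) \circ \ch(\sN)$ is by definition $\ch(\sM)$ with that multiset of monomials substituted in, we obtain the identity in $r$ variables. The hypothesis that $\sN(0)$ is concentrated in positive degree guarantees that $\cF_\sN(0)$ lives in positive degree, so the substitution is $\hbar$-admissible and the infinite sum over $n$ converges in the $\hbar$-adic (equivalently, polynomial-degree) filtration — this is the same well-definedness condition imposed on the plethysm $\hat\Lambda \times \hat\Lambda^{\ge 1} \to \hat\Lambda$ earlier.

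The graded refinement is handled by carrying along the sign $(-\hbar)^{\deg}$: the graded trace of $E$ is $\sum_d (-\hbar)^d \tilde\tr(E_d)$, so a "negative eigenvalue" of weight $\hbar^d$ behaves formally like the monomial $(-\hbar)^d \zeta$, and the extension of plethysm to $\hat\Lambda((\hbar))$ described in the excerpt (characterized by $\hbar$-linearity in the first slot and $f \circ (\hbar g) = \hbar^k (f \circ g)$ for $f \in \Lambda^k$) is designed precisely so that substituting such $\hbar$-weighted eigenvalues agrees with the graded trace of $\cF_\sM(E)$. The step I expect to be the main obstacle is the bookkeeping in this last paragraph: one must be careful that the graded trace of $\cF_\sM(\cF_\sN(D))$ — which involves Koszul signs from permuting graded tensor factors in $Y^{\otimes n}$ and taking $\SymG_n$-coinvariants — matches the sign conventions built into the $\hbar$-extension of plethysm, rather than, say, differing by an application of $\omega$. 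Reducing to eigenvalues (diagonalizing over a suitable extension of the field of fractions, or working universally with formal eigenvalue variables) and checking the signs on the power-sum generators $p_n$, where plethysm is a ring map and $p_n \circ (\hbar g) = \hbar^n(p_n \circ g)$ is explicit, is the cleanest way to nail this down; once it holds on the $p_n$ it holds everywhere by multiplicativity of $\ch(\sM) \circ (-)$ in its argument and linearity of $p_n \circ (-)$.
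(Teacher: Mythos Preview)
Your proposal is correct and follows essentially the same route as the paper: project to $\Lambda_r((\hbar))$, use $\cF_{\sM\circ\sN}\cong\cF_\sM\circ\cF_\sN$, diagonalize $\cF_\sN(D)$, and recognize the trace of $\cF_\sM$ on the resulting eigenvalues as the plethystic substitution. The paper organizes the graded bookkeeping slightly differently---it reduces $\sM$ to degree~$0$ by linearity in the first slot and then tracks the $\hbar$-grading coming from $\sN$ via an explicit decomposition of $\cF_\sM((y_{i,k}))$ over $k$---whereas you propose to verify the signs on the generators $p_n$; both handle the same Koszul-sign obstacle you correctly flagged.
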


\begin{proof}
  The idea of the proof in the non-graded case is sketched in \cite[Appendix A, Eq. (7.3)]{Macdonald1995}, so let us adapt this idea to the graded case. Note that the definition in~\cite{Macdonald1995} uses invariants rather than coinvariants for the polynomial functors, but as we work over a field of characteristic zero, the two are isomorphic.

  We show the identity by proving that it holds after projection to $\Lambda_r((\hbar))$, for any $r\ge 1$.
  For $\sM$ and $\sN$ symmetric sequences,
  let $\cF_{\sM},\cF_{\sN}:\ModK\to\ModK$ be the respective associated functors. By the linearity of the composition product and plethysm in the first variable, we can assume that $\sM$
  is concentrated in degree 0. Let $(x)$ denote the diagonal endomorphism of $\Bbbk^r$ with eigenvalues $x_1,\ldots,x_r$. Writing $\alpha=(\alpha_1,\ldots,\alpha_r)$ for a multi-index, we have by definition of the character map that
  that
  \[ \ch(\sN)(x)=\sum_{\alpha,k}(-\hbar)^k d_{\alpha,k} x^\alpha, \]
  where $d_{\alpha,k}$ is the dimension of the degree $k$ part of the eigenspace of $\cF_{\sN}((x))$ corresponding to the eigenvalue $x^\alpha$. For each $k\ge 0$, let $s_k:=\dim\cF_{\sN_k}(\Bbbk^r)$ and define variables $y_{1,k},\ldots,y_{s_k,k}$, by
  \[ \prod_{i=1}^{s_k}(1+y_{i,k}t)=\prod_\alpha(1+x^\alpha t)^{d_{\alpha,k}}. \]
  By diagonalizing each $\cF_{\sN_k}((x))$, we can find invertible $\phi_k\in\Hom_{\Bbbk}(\Bbbk^{s_k},\cF_{\sN_k}(\Bbbk^r))$ such that $\phi_k^{-1}\circ \cF_{\sN_k}((x))\circ\phi_k=(y_k):\Bbbk^{s_k}\to\Bbbk^{s_k}$, where we by abuse of notation write $(y_k):=\diag(y_{1,k},\ldots,y_{s_k,k})$.
  Letting $\Bbbk^{\underline{s}}$ denote the graded vector space given by $\Bbbk^{s_k}$ in degree $k$, we define $\phi\in\Hom_{\bQ}(\Bbbk^{\underline{s}},\cF_{\sN}(\Bbbk^r))$ so that its degree $k$ part is $\phi_k$.
  This means that $\phi^{-1}\circ \cF_{\sN}((x))\circ\phi=(y)$, where we again, by abuse of notation, write $(y)$ for the linear map $\Bbbk^{\underline{s}}\to\Bbbk^{\underline{s}}$ is $(y_k)$ when restricted to degree $k$. By definition of the character map, we thus have
  \begin{align*}
    \ch(\sM\circ \sN)(x) & = \tr \bigl( \cF_{\sM}(\cF_{\sN}((x))) \bigr) \\
                         & = \tr \bigl( \cF_{\sM}(\phi\circ(y)\circ\phi^{-1}) \bigr) \\
                         & = \tr \bigl( \cF_{\sM}(\phi)\circ\cF_{\sM}((y))\circ\cF_{\sM}(\phi^{-1}) \bigr) \\
                         & = \tr \bigl( \cF_{\sM}((y)) \bigr) \\
                         & = \sum_{k\ge 0}(-\hbar)^{k} \tr \bigl( \cF_{\sM}((y))_k \bigr) \\
                         & = \ch(\sM) \bigl( ( (-\hbar)^{k }y_{i,k})_{i,k} \bigr) \\
                         & = \bigl( \ch(\sM)\circ\ch(\sN) \bigr)(x).
  \end{align*}
  The first equality is by definition of the composition product and the character map; the last one is by definition of the plethysm.
  All the other steps are straightforward except the second to last.
  To justify it, note that if we forget $\hbar$,
  \begin{align*}
    \ch(\sM) \bigl( (y_{i,j})_{i,j} \bigr)
    & = \tr \bigl( \cF_M((y_{i,j})) \curvearrowright \bigoplus_{n \geq 0} \sM(n) \otimes_{\SymG_n} (\bigoplus_{j} \Bbbk^{s_j})^{\otimes n} \bigr) \\
    & = \tr \Bigl( \cF_M((y_{i,j})) \curvearrowright \bigoplus_{n,k} \bigl( \bigoplus_{j_1+\dots+j_n=k} \sM(n) \otimes \Bbbk^{s_{j_1}} \otimes \dots \otimes \Bbbk^{s_{j_n}} \bigr)/ \SymG_n \Bigr) \\
    & = \sum_k \tr \Bigl( \cF_M((y_{i,j}))_k \curvearrowright \bigoplus_{n} \bigl( \bigoplus_{j_1+\dots+j_n=k} \sM(n) \otimes \Bbbk^{s_{j_1}} \otimes \dots \otimes \Bbbk^{s_{j_n}} \bigr) / \SymG_n \Bigr).
  \end{align*}
  To recover $\hbar$, note that if we substitute $y_{i,j}$ by $(-\hbar)^j y_{i,j}$ in the resulting polynomial, then we multiply each monomial by $(-\hbar)$ to the power $j_1 + \dots + j_n$.
  The $k$th summand is thus multiplied by $(-\hbar)^{k}$, which gives the desired result.
\end{proof}

\begin{example}
  For $d\ge 2$, let $\sE_d$ denote the little $d$-disks operad, which is the topological operad such that $\sE_d(r)$ consists of $r$ unit disks embedded in the unit disk by translations and positive rescalings with disjoint interiors (see Figure \ref{fig:littledisks} for an example of an element).
  The homology $H_*(\sE_d,\bQ)$ is isomorphic to the operad of Poisson $d$-algebras, i.e., algebras equipped with a graded commutative product and a Lie bracket of degree $d-1$.
  As a symmetric sequence, $u\Pois_d \cong u\Com\circ\Lie_d$, where $\Lie_d$ is the $(d-1)$-fold operadic suspension of the Lie operad.

  We have $u\Com(n) = \triv_n$ for $n \geq 0$, so $\ch(u\Com) = \sum_{n \geq 0} h_n = \exp(\sum_{n \geq 1} p_n/n)$.
  On the other hand, $\Lie_d(n) = \Lie(n) \otimes \sgn_n^{\otimes (d-1)}[(n-1)(d-1)]$.
  Thanks to~\cite[Remark~2.3.8]{BergstroemDiaconuPetersenWesterland2023} (see also Witt's formula~\cite[Corollary~5.3.5]{Perrin1997}), we have:
  \begin{equation}
    \ch(\Lie)
    = \sum_{n \geq 1} \frac{-\mu_n}{n} \log(1-p_n)
    = \sum_{n,k \geq 1} \frac{\mu_n}{n} \cdot \frac{p_n^k}{k}
    ,
  \end{equation}
  where $\mu_n$ is the Möbius function.
  We thus get (using $\omega(p_n) = (-1)^{n-1} p_n$):
  \begin{equation}
    \ch(\Lie_d)
    = (-1)^d \hbar^{1-d} \sum_{n \geq 1} \frac{\mu_n}{n} \log(1 + (-1)^d \hbar^{(d-1)n} p_n).
  \end{equation}
  Therefore:
  \begin{equation}
    \begin{aligned}
      \log\ch(\Pois_d)
       & = \Bigl(\sum_{k \geq 1} \frac{p_k}{k} \Bigr) \circ \Bigl( (-1)^d \hbar^{1-d} \sum_{n \geq 1} \frac{\mu_n}{n} \log(1 + (-1)^d \hbar^{(d-1)n} p_n) \Bigr) \\
       & = (-1)^d \sum_{n,k \geq 1} \hbar^{(1-d)k} \frac{\mu_n}{n k} \log(1 + (-1)^d \hbar^{(d-1)nk} p_{n k})                                                    \\
       & = (-1)^d \sum_{m \geq 1} \sum_{n | m} \hbar^{(1-d)m/n} \frac{\mu_n}{m} \log(1 + (-1)^d \hbar^{(d-1)m} p_m).
    \end{aligned}
  \end{equation}
  If $d = 1$, using $\sum_{n | m} \mu_n = 0$ for $m > 1$, then we get $\ch(u\Pois_d) = 1/(1-p_1) = \sum_{n \geq 0} p_1^n = \sum_{n \geq 0} \ch(\bQ[\SymG_n])$, which is consistent with the isomorphism of symmetric sequences $u\Pois_1 \cong u\Ass$ (reinterpretation of the PBW theorem).
  If $d = 2$ and we set $\hbar = 1$, then we recover the computation of~\cite[Proposition~2.3.7]{BergstroemDiaconuPetersenWesterland2023} (witnessing that the Koszul complex of $\Com$ is acyclic).
\end{example}

\begin{figure}[h]
  \centering
  \includegraphics[scale=0.4]{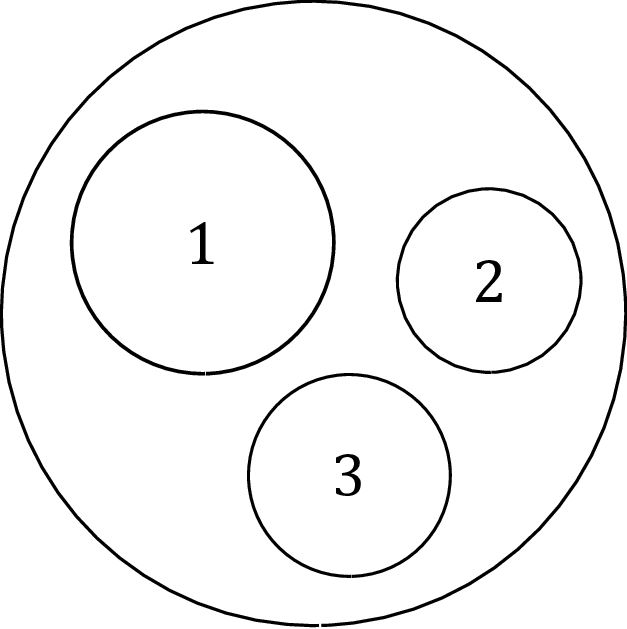}
  \caption{An element of $\sE_2(3)$.}
  \label{fig:littledisks}
\end{figure}

\section{Bisymmetric functions and relative operads}
\label{sec:bisym-operads}

We now introduce relative operads, a special kind of two-colored operads whose best-known representatives are the Swiss-Cheese operads.
We mimic the definitions of the previous sections to define a notion of relative plethysm, which models the composition product of relative operads.
We also compare our definition with a related notion introduced by \textcite{Koike1989}.

\subsection{Bisymmetric pairs and plethysm}
\label{sec:bisym-pairs}

\begin{definition}
  A \emph{bisymmetric function} is an element $\bar{f} = \bar{f}(x,y)$ in the ring $\Lambda_{x,y} \coloneqq \Lambda_x \otimes \Lambda_y$ (see Notation \ref{not:lambda-x-y}).
  We will also consider elements of the completed ring $\hat{\Lambda}_{x,y} \coloneqq \hat{\Lambda}_x \hotimes \hat{\Lambda}_y$ as well as in the rings of formal Laurent series $\Lambda_{x,y}((\hbar))$ and $\hat{\Lambda}_{x,y}((\hbar))$.
\end{definition}

\begin{notation}
  Given $f \in \Lambda$, we write $f(x) \coloneqq f \otimes 1 \in \Lambda_{x,y}$ and $f(y) \coloneqq 1 \otimes f \in \Lambda_{x,y}$.
\end{notation}

It follows from Proposition~\ref{prop:mult-basis} that the families $(e_n(x), \, e_n(y))_{n \geq 1}$ and $(h_n(x), \, h_n(y))_{n \geq 1}$ are each a set of algebraically independent generators of $\Lambda_{x,y}$,
that $(p_n(x), \, p_n(y))_{n \geq 1}$ is a set of algebraically independent generators of $\Lambda_{x,y} \otimes \bQ$ and that $(s_\lambda(x)s_\mu(y))_{\lambda,\mu}$ is an additive basis of $\Lambda_{x,y}$.
Note also that the involution $\omega:\Lambda\to\Lambda$ induces an involution $\omega\otimes\omega$ on $\Lambda_{x,y}$.

\begin{definition}
  A \emph{relative bisymmetric function} is a pair $(\bar{f}, f) \in \Lambda_{x,y} \times \Lambda_y$.
\end{definition}

\begin{definition}
  Given relative bisymmetric functions $(\bar{f}, f), (\bar{g}, g) \in \Lambda_{x,y} \times \Lambda_y$, the \emph{relative plethysm} is defined by:
  \begin{equation}
    (\bar{f}, f) \circ (\bar{g}, g) \coloneqq \bigl(\bar{f} \bcirc (\bar{g}, g), \; f \circ g\bigr).
  \end{equation}
  The symmetric function $f \circ g \in \Lambda_y$ is the usual plethysm of $f$ and $g$.
  The bisymmetric function $\bar{f} \bcirc (\bar{g}, g) \in \Lambda_{x,y}$ is defined as follows.
  Let $g = \sum_\beta g_\beta y^\beta$ and $\bar{g} = \sum_{\alpha, \beta} \bar{g}_{\alpha,\beta} x^\alpha y^\beta$ (Notation~\ref{not:f-alpha}) and assume that $\bar{g}_{\alpha,\beta}, g_\beta \geq 0$ for all $\alpha,\beta$.
  Introduce temporary variables $z_i, w_j$ such that, for a placeholder $t$,
  \begin{align}\label{eq:zi-wj}
    \prod_i (1 + z_i t) & = \prod_{\alpha,\beta} (1 + x^\alpha y^\beta t)^{\bar{g}_{\alpha,\beta}},
                        &
    \prod_j (1 + w_j t) & = \prod_\beta (1+y^\beta t)^{g_\beta};
  \end{align}
  Then we have
  \begin{equation}
    (\bar{f} \bcirc (\bar{g}, g))(x_1, \dots; y_1, \dots) \coloneqq \bar{f}(z_1, \dots; w_1, \dots).
  \end{equation}
  As for the usual plethysm, this definition extends to functions with negative coefficients using the characterizations below, and also extends to the completion of the ring as well as power series.
\end{definition}

\begin{proposition}
  For any pair $(\bar{g}, g) \in \Lambda_{x,y} \times \Lambda_y$, the map $\bar{f} \mapsto \bar{f} \bcirc (\bar{g}, g)$ is an endomorphism of the ring $\Lambda_{x,y}$.
\end{proposition}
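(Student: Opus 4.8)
The plan is to reduce the claim to the corresponding fact for the ordinary plethysm, which we already know is a ring endomorphism in the first variable (stated in the excerpt). First I would observe that it suffices to treat the case where $\bar g$ and $g$ have nonnegative coefficients: once additivity and multiplicativity are established in this case, they extend to arbitrary $(\bar g,g)$ by the same $p_n$-basis argument used for the ordinary plethysm, since the relative plethysm is characterized by being linear in the $p_n(x),p_n(y)$ appearing in $\bar f$ and by how it acts on those generators. So assume $\bar g=\sum \bar g_{\alpha,\beta}x^\alpha y^\beta$ with $\bar g_{\alpha,\beta}\ge 0$ and $g=\sum g_\beta y^\beta$ with $g_\beta\ge 0$, and fix the temporary variables $z_i,w_j$ as in \eqref{eq:zi-wj}.

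The key point is that, with $z_i,w_j$ fixed, the operation $\bar f\mapsto \bar f\bcirc(\bar g,g)$ is literally the substitution homomorphism
\[
  \Lambda_{x,y}\longrightarrow \Lambda_{x,y}, \qquad \bar f(x_1,\dots;y_1,\dots)\longmapsto \bar f(z_1,\dots;w_1,\dots).
\]
Here I would be slightly careful about what this substitution means: the $z_i$ are symmetric functions in the ``alphabet'' of monomials $x^\alpha y^\beta$ taken with multiplicity $\bar g_{\alpha,\beta}$, and the $w_j$ are symmetric functions in the alphabet of monomials $y^\beta$ with multiplicity $g_\beta$, so the combined tuple $(z_i;w_j)$ determines a map $\Lambda_x\otimes\Lambda_y\to\hat\Lambda_{x,y}$ by sending the $x$-alphabet to $(z_i)$ and the $y$-alphabet to $(w_j)$. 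Since a symmetric function evaluated on a (possibly infinite) alphabet is a ring homomorphism out of $\Lambda$ — this is exactly the universal property underlying the ordinary plethysm, and $\Lambda_{x,y}=\Lambda_x\otimes\Lambda_y$ is the coproduct of two copies of $\Lambda$ — the induced map on $\Lambda_{x,y}$ is a ring homomorphism. Concretely: for $\bar f_1,\bar f_2\in\Lambda_{x,y}$ one has $(\bar f_1\bar f_2)(z;w)=\bar f_1(z;w)\,\bar f_2(z;w)$ and $(\bar f_1+\bar f_2)(z;w)=\bar f_1(z;w)+\bar f_2(z;w)$, and the unit $1$ goes to $1$, simply because evaluating a sum or product of symmetric functions on a fixed alphabet commutes with the sum or product. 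This gives additivity and multiplicativity of $\bar f\mapsto \bar f\bcirc(\bar g,g)$ in the nonnegative-coefficient case.

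The main obstacle is making the ``evaluation on an alphabet is a homomorphism'' step rigorous in the completed/graded setting and checking that the extension to negative coefficients is consistent. For the first, I would note that everything is graded with finite-dimensional pieces in each bidegree: fixing the total $x$-degree and $y$-degree, only finitely many terms contribute, so the infinite substitutions converge in $\hat\Lambda_{x,y}((\hbar))$ and the homomorphism property can be checked degree by degree, reducing to the elementary finite-alphabet case. For the extension to negative coefficients, I would invoke Proposition~\ref{pro:pn-circ}-style reasoning: write $\bar g$ and $g$ in the $p_n(x),p_n(y)$ basis over $\bQ$, note that the relative plethysm is defined to be $\bar f\mapsto$ (something linear in these power-sum coordinates of the second argument) with the nonnegative case pinning down its values, and observe that being a ring homomorphism in $\bar f$ is a closed condition preserved under this linear extension — precisely as is done for the ordinary plethysm just above in the excerpt. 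I expect this last bookkeeping to be the only genuinely fiddly part; the core statement is a formal consequence of the universal property of the ring of symmetric functions.
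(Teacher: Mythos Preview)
Your proposal is correct and follows essentially the same approach as the paper: the core observation is that, for fixed nonnegative $(\bar g,g)$, the operation $\bar f\mapsto \bar f\bcirc(\bar g,g)$ is literally the substitution $\bar f(x;y)\mapsto \bar f(z;w)$, and substitution of a fixed alphabet into symmetric functions is a ring homomorphism. The paper's proof is much terser---it says ``This is immediate'' and displays only the additivity computation $(\bar f+\bar f')(z;w)=\bar f(z;w)+\bar f'(z;w)$---so your extra care about convergence in the completed setting and about extending to $(\bar g,g)$ with negative coefficients is more detail than the paper supplies, but the underlying idea is identical.
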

\begin{proof}
  This is immediate.
  For example, let us show that the map is additive.
  Given $\bar{f}, \bar{f}' \in \Lambda_{x,y}$ and $(\bar{g},g) \in \Lambda_{x,y} \times \Lambda_y$, with the notation of \eqref{eq:zi-wj}, we have
  \begin{align*}
    (\bar{f} + \bar{f}') \bcirc (\bar{g}, g)
     & = (\bar{f} + \bar{f}')(z_1, \dots; w_1, \dots) \\
     & = \bar{f}(z_1, \dots; w_1, \dots) + \bar{f}'(z_1, \dots; w_1, \dots) \\
     & = \bar{f} \bcirc (\bar{g}, g) + \bar{f}' \bcirc (\bar{g}, g).
    \qedhere
  \end{align*}
\end{proof}

Note that there are two involutions $\omega_x$, $\omega_y$ on $\Lambda_{x,y}$ analogous to $\omega$ from Equation~\eqref{eq:omega}: one that is defined by $\omega_x(h_n(x)) = e_n(x)$ and $\omega_x(h_n(y)) = h_n(y)$, and another that is defined similarly.
These behave with respect to $\bcirc$ similarly to Equation~\eqref{eq:omega-plethysm}.

\begin{lemma}
  Suppose that $\bar{f} \in \Lambda_{x,y}$ is of the form $\bar{f} = f(y)$ for some $f \in \Lambda_y$.
  Then for any pair $(\bar{g}, g)$, we have that $f(y) \bcirc (\bar{g}, g) = (f \circ g)(y)$.
\end{lemma}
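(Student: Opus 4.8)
The plan is to reduce the identity to the definition of the one-variable plethysm by a direct substitution, and then bootstrap from the positive case to the general case using multiplicativity.

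\textbf{Positive case.} First I would assume that $\bar{g}$ and $g$ have nonnegative coefficients, so that $\bar{f} \bcirc (\bar{g}, g)$ is literally given by the substitution formula. Introduce the temporary variables $z_i$ and $w_j$ as in \eqref{eq:zi-wj}, so that $(\bar{f} \bcirc (\bar{g}, g))(x_1, \dots; y_1, \dots) = \bar{f}(z_1, \dots; w_1, \dots)$. The key point is that $\bar{f} = f(y) = 1 \otimes f$ does not involve the $x$-variables at all, so $\bar{f}(z_1, \dots; w_1, \dots) = f(w_1, w_2, \dots)$, independently of the $z_i$. Now by the second relation in \eqref{eq:zi-wj}, the $w_j$ are precisely the temporary variables defining the one-variable plethysm $f \circ g$ in the $y$-alphabet, namely $\prod_j (1 + w_j t) = \prod_\beta (1 + y^\beta t)^{g_\beta}$. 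Hence $f(w_1, w_2, \dots) = (f \circ g)(y)$, which is the claim in this case.

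\textbf{General case.} Next I would extend to arbitrary pairs $(\bar{g}, g)$ (possibly with negative coefficients, and in the completed rings or in Laurent series). Here I would use that $\bar{f} \mapsto \bar{f} \bcirc (\bar{g}, g)$ is a ring endomorphism of $\Lambda_{x,y}$ (the preceding proposition), that $f \mapsto f \circ g$ is a ring endomorphism of $\Lambda_y$, and that $f \mapsto f(y)$ is a ring homomorphism $\Lambda_y \to \Lambda_{x,y}$. Consequently both $f \mapsto f(y) \bcirc (\bar{g}, g)$ and $f \mapsto (f \circ g)(y)$ are ring homomorphisms $\Lambda_y \to \Lambda_{x,y}$, so it suffices to verify they agree on the algebraically independent generators $h_n(y)$ of $\Lambda_y$ (or on $p_n(y)$ over $\bQ$). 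For these generators the identity is exactly the compatibility of the extensions of $\bcirc$ and $\circ$ with the substitution formula, and — if one invokes the characterization of $\bcirc$ on power sums, which plays the role of the identity $p_n \circ g = g \circ p_n$ from Proposition~\ref{pro:pn-circ} — it collapses to $p_n(y) \bcirc (\bar{g}, g) = (g \circ p_n)(y) = (p_n \circ g)(y)$.

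I expect the only genuine subtlety, and therefore the main obstacle, to lie in this second step: making the passage from the positive case to the general case fully rigorous requires pinning down precisely how $\bcirc$ is extended to data with negative coefficients (which the text defers to ``the characterizations below'') and checking that $f(y) \bcirc (\bar{g}, g)$ still reads off as $f(w_1, w_2, \dots)$ after that extension. Everything in the positive case is a one-line unwinding of definitions, and the extension argument is the standard ``both sides are additive and multiplicative in the relevant slot'' reasoning used for the ordinary plethysm; so once the bookkeeping of the extension is in place the Lemma follows immediately.
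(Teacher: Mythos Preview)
Your positive case is exactly the paper's proof: the paper gives two sentences to the effect that $\bar f$ ignores the $x$-variables, so only the $w_j$ matter, and these coincide with the temporary variables for the one-variable plethysm.

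Your general case, however, is both unnecessary and circular in the paper's logical flow. At the point where this lemma appears, $\bcirc$ has only been \emph{defined} for $(\bar g,g)$ with nonnegative coefficients; the extension to negative coefficients is carried out \emph{after} this lemma, using its Corollary (the formula $p_n(y)\bcirc(\bar g,g)=(g\circ p_n)(y)$) together with the companion proposition for $p_n(x)$ as the \emph{definition} of $\bcirc$ in the general case. So the lemma only needs to be proved in the positive case, and your attempt to bootstrap to general $(\bar g,g)$ by invoking ``the characterization of $\bcirc$ on power sums'' would cite precisely the corollary that is deduced from the lemma you are proving. What you flag as the ``main obstacle'' is therefore not an obstacle at all: it dissolves once you observe where the lemma sits in the paper's order of development.
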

\begin{proof}
  Since $\bar{f}$ does not depend on the variables $x_i$, then $\bar{g}$ is irrelevant for computing $\bar{f} \bcirc (\bar{g},g)$.
  The definition of the $w_j$~\eqref{eq:zi-wj} match those used to define $f \circ g$.
\end{proof}

Thus by Proposition~\ref{pro:pn-circ}:

\begin{corollary}
  For any $n \geq 0$, the map $(\bar{g}, g) \mapsto p_n(y) \bcirc (\bar{g}, g)$ is an endomorphism and:
  \begin{equation}\label{eq:pn-circ-y}
    p_n(y) \bcirc (\bar{g}, g) = g(y_1^n, y_2^n, \dots) = (g \circ p_n)(y).
  \end{equation}
  In particular, $p_k(y) \bcirc (\bar{g}, p_l) = p_{kl}(y)$ for all $k,l \geq 0$ and $\bar{g} \in \Lambda_{x,y}$.
\end{corollary}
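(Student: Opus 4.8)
The plan is to prove the corollary in two parts. First, for the claim that $(\bar g, g) \mapsto p_n(y) \bcirc (\bar g, g)$ is an endomorphism of $\Lambda_{x,y}$: this follows immediately from the preceding lemma, which identifies $p_n(y) \bcirc (\bar g, g)$ with $(p_n \circ g)(y)$, together with the fact from Proposition~\ref{pro:pn-circ} that $g \mapsto p_n \circ g$ is an endomorphism of $\Lambda_y$. Since the map $\Lambda_y \to \Lambda_{x,y}$, $f \mapsto f(y)$, is itself a ring homomorphism, the composite $(\bar g, g) \mapsto (p_n \circ g)(y)$ is a ring homomorphism in the relevant variable, hence an endomorphism of $\Lambda_{x,y}$ (in the sense that $\bar h \mapsto \bar h \bcirc (\bar g, g)$ sends $p_n(y)$ to it compatibly — more precisely, fixing the second argument, the assignment on the first argument is already known to be a ring endomorphism by the displayed Proposition, and evaluating at $p_n(y)$ gives a ring morphism in $(\bar g, g)$).

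Second, for the explicit formula: apply the lemma with $f = p_n \in \Lambda_y$ to get $p_n(y) \bcirc (\bar g, g) = (p_n \circ g)(y)$. Then invoke Proposition~\ref{pro:pn-circ}, which gives $p_n \circ g = g(x_1^n, x_2^n, \dots) = g \circ p_n$; transporting this identity along the renaming $x_i \mapsto y_i$ yields $p_n(y) \bcirc (\bar g, g) = g(y_1^n, y_2^n, \dots) = (g \circ p_n)(y)$, which is exactly \eqref{eq:pn-circ-y}. For the final sentence, specialize $g = p_l$: then $g \circ p_n = p_l \circ p_n = p_{ln}$ by the second identity of Proposition~\ref{pro:pn-circ}, so $p_k(y) \bcirc (\bar g, p_l) = (p_k \circ p_l)(y) = p_{kl}(y)$, using commutativity $p_k \circ p_l = p_{lk} = p_{kl}$.

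There is essentially no obstacle here: the corollary is a formal consequence of the lemma and Proposition~\ref{pro:pn-circ}, and the proof is a two-line bookkeeping argument. The only point requiring a modicum of care is the variable renaming $x \leftrightarrow y$ when transporting statements about $\Lambda = \Lambda_x$ to statements about $\Lambda_y$ (cf.\ Notation~\ref{not:lambda-x-y}), and the observation that $f \mapsto f(y)$ is a ring embedding so that ring-theoretic properties are preserved. Accordingly I would write the proof as: \emph{Apply the previous lemma with $f = p_n$ and then use Proposition~\ref{pro:pn-circ} (after renaming the variables $x_i$ to $y_i$); the ``in particular'' statement follows by taking $g = p_l$ and using $p_k \circ p_l = p_{kl}$.}
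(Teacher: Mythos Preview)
Your proposal is correct and follows exactly the paper's approach: the paper's entire proof is the single line ``Thus by Proposition~\ref{pro:pn-circ}'' placed after the lemma, and you have simply unpacked that sentence. Your discussion of the ``endomorphism'' claim is slightly overwrought (the point is just that by the lemma the expression equals $(p_n\circ g)(y)$, which is a ring homomorphism in $g$ by Proposition~\ref{pro:pn-circ}), but the content is the same.
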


On the other hand, if the first variable only depends on $x$, then we get:

\begin{proposition}
  For any $n \geq 0$, the map $(\bar{g}, g) \mapsto p_n(x) \bcirc (\bar{g}, g)$ is an endomorphism and:
  \begin{equation}\label{eq:pn-circ-x}
    p_n(x) \bcirc (\bar{g}, g) = \bar{g}(x_1^n, x_2^n, \dots; y_1^n, y_2^n, \dots) = \bar{g} \bcirc (p_n(x), p_n(y)).
  \end{equation}
  In particular, $p_k(x) \bcirc (p_l(x), g) = p_{kl}(x)$ and $p_k(x) \bcirc (p_l(y), g) = p_{kl}(y)$ for all $k,l \geq 0$ and $\bar{g} \in \Lambda_{x,y}$.
\end{proposition}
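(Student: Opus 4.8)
The plan is to unwind the definitions in the case where all coefficients are non-negative, and then extend by linearity, following the template of the one-variable plethysm recalled in Section~\ref{sec:symmetric-functions-operads}.

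First I would treat the case $\bar g_{\alpha,\beta}, g_\beta \geq 0$. Let $z_i, w_j$ be the temporary variables of \eqref{eq:zi-wj}. Since $p_n(x) = \sum_i x_i^n$ involves no $y$-variable, its relative plethysm with $(\bar g, g)$ is obtained by substituting the $z_i$ for the $x_i$, so $p_n(x)\bcirc(\bar g, g) = \sum_i z_i^n$. The relation $\prod_i(1+z_i t) = \prod_{\alpha,\beta}(1+x^\alpha y^\beta t)^{\bar g_{\alpha,\beta}}$ pins down every elementary symmetric function of the $z_i$, hence every symmetric function of the $z_i$, and identifies the multiset $\{z_i\}$ with the collection of monomials $x^\alpha y^\beta$, each taken with multiplicity $\bar g_{\alpha,\beta}$ — this is precisely the identification underlying the definition of the plethysm itself. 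Consequently
\[
  p_n(x)\bcirc(\bar g, g) = \sum_i z_i^n = \sum_{\alpha,\beta} \bar g_{\alpha,\beta}\,(x^\alpha y^\beta)^n = \bar g(x_1^n, x_2^n, \dots;\, y_1^n, y_2^n, \dots),
\]
which is the first equality of \eqref{eq:pn-circ-x}. For the second equality I would run the same computation with the pair $(p_n(x), p_n(y))$ in the second slot: since $p_n(x)$ has coefficient $1$ on each monomial $x_i^n$ and $0$ elsewhere, and similarly for $p_n(y)$, the defining relations \eqref{eq:zi-wj} for $\bar g \bcirc (p_n(x), p_n(y))$ read $\prod_i(1+z_i t) = \prod_i(1+x_i^n t)$ and $\prod_j(1+w_j t) = \prod_i(1+y_i^n t)$, so $\{z_i\} = \{x_i^n\}$ and $\{w_j\} = \{y_i^n\}$; hence $\bar g \bcirc (p_n(x), p_n(y)) = \bar g(x_1^n,\dots;\,y_1^n,\dots)$ as well (this one holds for arbitrary $\bar g$, since the second argument $(p_n(x),p_n(y))$ always has non-negative coefficients).

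Next I would dispatch the endomorphism claim and the general case. The substitution $x_i \mapsto x_i^n$, $y_i \mapsto y_i^n$ defines a graded ring endomorphism $\psi_n$ of $\Lambda_{x,y}$, induced by the corresponding endomorphisms of the polynomial rings $\bZ[x_1,\dots,x_k,y_1,\dots,y_k]$, which commute with the $\SymG_k$-actions and with the restriction maps. Hence $(\bar g, g) \mapsto \bar g(x_1^n,\dots;\,y_1^n,\dots)$, being $\psi_n$ precomposed with the projection $\Lambda_{x,y}\times\Lambda_y \to \Lambda_{x,y}$, is a ring homomorphism, which is the asserted ``endomorphism'' property. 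By the previous step it agrees with $\bar g \mapsto \bar g \bcirc (p_n(x), p_n(y))$ — a ring endomorphism of $\Lambda_{x,y}$ by the proposition preceding this one — on all elements with non-negative coefficients, and these additively span $\Lambda_{x,y}$, so the two ring endomorphisms coincide on all of $\Lambda_{x,y}$. The extension of $\bcirc$ to inputs with negative coefficients, to the completions $\hat{\Lambda}_{x,y}$, and to the Laurent series rings is made exactly as for the one-variable plethysm ($\hbar$-linearly and additively in the first variable, via the $p$-basis in the second), and \eqref{eq:pn-circ-x} is preserved because both sides respect these operations, yielding \eqref{eq:pn-circ-x} in general. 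Finally, the ``in particular'' statements follow by substituting $\bar g = p_l(x)$ and $\bar g = p_l(y)$ into \eqref{eq:pn-circ-x} and using $p_l(x_1^k, x_2^k, \dots) = \sum_i x_i^{kl} = p_{kl}(x)$ and likewise for $y$, independently of $g$.

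I do not expect a genuine obstacle: the one delicate point is the identification in the first step of the multiset $\{z_i\}$ of temporary variables from the product relation \eqref{eq:zi-wj} — on which the well-definedness of the plethysm already rests — together with checking that everything is compatible with the passage to negative coefficients and to the completed and Laurent series rings; both are handled verbatim as in the one-variable case of Section~\ref{sec:symmetric-functions-operads}.
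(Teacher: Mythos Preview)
Your proposal is correct and follows exactly the approach the paper intends: the paper's own proof is the single line ``Almost identical to the proof of Proposition~\ref{pro:pn-circ}, which itself follows immediately from the definition of the plethysm,'' and what you have written is a careful unwinding of precisely that argument in the two-variable setting. The identification of the multiset $\{z_i\}$ with the monomials $x^\alpha y^\beta$ (with multiplicity $\bar g_{\alpha,\beta}$), the computation $\sum_i z_i^n = \bar g(x_1^n,\dots;y_1^n,\dots)$, and the observation that the right-hand side is the ring endomorphism given by $x_i\mapsto x_i^n$, $y_j\mapsto y_j^n$ are exactly the ingredients the paper is pointing to via Example~\ref{exa:comput-plethysm} and Proposition~\ref{pro:pn-circ}.
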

\begin{proof}
  Almost identical to the proof of Proposition~\ref{pro:pn-circ}, which itself follows immediately from the definition of the plethysm (see Example~\ref{exa:comput-plethysm} for $n=2$).
\end{proof}

Since the family $(p_n(x), p_n(y))_{n \geq 1}$ forms a set of algebraically independent generators for $\Lambda_{x,y}$, the above properties completely characterize the relative plethysm and allow to define $\bar{f} \bcirc (\bar{g}, g)$ even when $\bar{g}$ or $g$ have negative coefficients, or when they are power series (with respect to $\hbar$), assuming that that $(g,\bar{g}$) vanish at $0$.

Relative plethysm satisfies the following associativity property:
\begin{proposition}\label{prop:rel-plethysm-assoc}
  Let $\bar{f}, \bar{g}, \bar{h} \in \Lambda_{x,y}$ and $g,h \in \Lambda_y$.
  Then we have the equality:
  \begin{equation}
    \bigl( \bar{f} \bcirc (\bar{g}, g) \bigr) \bcirc (\bar{h}, h) = \bar{f} \bcirc \bigl( \bar{g} \bcirc (\bar{h}, h), \, g \circ h \bigr).
  \end{equation}
\end{proposition}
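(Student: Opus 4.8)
The plan is to reduce everything to associativity of the ordinary plethysm (the preceding proposition), the computational rules~\eqref{eq:pn-circ-y} and~\eqref{eq:pn-circ-x}, the lemma that $f(y)\bcirc(\bar g,g)=(f\circ g)(y)$, and the fact that $\bar f\mapsto\bar f\bcirc(\bar g,g)$ is a ring endomorphism for every pair $(\bar g,g)$. First observe that, as functions of $\bar f$, both sides of the claimed identity are ring endomorphisms of $\Lambda_{x,y}$: the left-hand side is the composite of the two ring endomorphisms $\bar f\mapsto\bar f\bcirc(\bar g,g)$ and $(-)\bcirc(\bar h,h)$, while the right-hand side is the ring endomorphism $(-)\bcirc\bigl(\bar g\bcirc(\bar h,h),\,g\circ h\bigr)$ attached to the relative bisymmetric function $\bigl(\bar g\bcirc(\bar h,h),\,g\circ h\bigr)\in\Lambda_{x,y}\times\Lambda_y$. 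Since $\Lambda_{x,y}$ is torsion-free we may extend scalars to $\bQ$, where $(p_n(x),p_n(y))_{n\ge1}$ generates the ring; hence it suffices to verify the identity for $\bar f=p_n(y)$ and for $\bar f=p_n(x)$.

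For $\bar f=p_n(y)$, rule~\eqref{eq:pn-circ-y} gives $p_n(y)\bcirc(\bar g,g)=(g\circ p_n)(y)$, which lies in $\Lambda_y\subset\Lambda_{x,y}$, so the lemma on $f(y)\bcirc(-)$ turns the left-hand side into $\bigl((g\circ p_n)\circ h\bigr)(y)$, while~\eqref{eq:pn-circ-y} applied once more turns the right-hand side into $\bigl((g\circ h)\circ p_n\bigr)(y)$. These agree because, by associativity of the ordinary plethysm and the identity $p_n\circ h=h\circ p_n$ of Proposition~\ref{pro:pn-circ}, we have $(g\circ p_n)\circ h=g\circ(p_n\circ h)=g\circ(h\circ p_n)=(g\circ h)\circ p_n$.

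For $\bar f=p_n(x)$, let $\psi_n$ denote the ring endomorphism of $\Lambda_{x,y}$ sending $x_i\mapsto x_i^n$ and $y_j\mapsto y_j^n$; by~\eqref{eq:pn-circ-x} one has $p_n(x)\bcirc(\bar g,g)=\psi_n(\bar g)$ for every pair $(\bar g,g)$. Thus the two sides become $\psi_n(\bar g)\bcirc(\bar h,h)$ and $\psi_n\bigl(\bar g\bcirc(\bar h,h)\bigr)$, and it remains to prove that $\psi_n$ commutes with $(-)\bcirc(\bar h,h)$. Since $\bar g\mapsto\psi_n(\bar g)\bcirc(\bar h,h)$ and $\bar g\mapsto\psi_n\bigl(\bar g\bcirc(\bar h,h)\bigr)$ are again ring endomorphisms of $\Lambda_{x,y}$, it is enough to check this on $\bar g=p_m(x)$ and $\bar g=p_m(y)$. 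In the first case both sides equal $\psi_{mn}(\bar h)$, using $\psi_n\circ\psi_m=\psi_{mn}$, the equality $p_m(x)\bcirc(\bar h,h)=\psi_m(\bar h)$, and~\eqref{eq:pn-circ-x}. In the second case both sides equal $(h\circ p_{mn})(y)$: indeed $\psi_n(p_m(y))\bcirc(\bar h,h)=p_{mn}(y)\bcirc(\bar h,h)=(h\circ p_{mn})(y)$ by~\eqref{eq:pn-circ-y}, whereas $\psi_n\bigl(p_m(y)\bcirc(\bar h,h)\bigr)=\psi_n\bigl((h\circ p_m)(y)\bigr)=\bigl((h\circ p_m)\circ p_n\bigr)(y)=(h\circ p_{mn})(y)$, using~\eqref{eq:pn-circ-y}, the identity $\psi_n(\phi(y))=(\phi\circ p_n)(y)$, associativity of the ordinary plethysm, and $p_m\circ p_n=p_{mn}$.

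The one step that genuinely requires an extra argument is this middle reduction in the case $\bar f=p_n(x)$: one must first establish that the Adams-type operation $\psi_n$ commutes with the relative plethysm $(-)\bcirc(\bar h,h)$, and this itself has to be reduced to power sums. Everything else follows formally from the ring-endomorphism properties already proved and associativity of the classical plethysm, and there is no convergence subtlety since the statement concerns honest elements of $\Lambda_{x,y}$ and $\Lambda_y$.
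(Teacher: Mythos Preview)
Your proof is correct and follows essentially the same strategy as the paper: reduce to the power-sum generators $p_n(x),p_n(y)$ using that $\bar f\mapsto\bar f\bcirc(\bar g,g)$ is a ring endomorphism, and then verify the remaining cases using \eqref{eq:pn-circ-y}, \eqref{eq:pn-circ-x}, and associativity of the classical plethysm. The paper's own proof is a one-sentence sketch (``it suffices to check the relation for $\bar f=p_n(x)$ or $p_n(y)$, $\bar g=p_m(x)$ or $p_m(y)$, and $g=p_k(y)$, which is straightforward''), whereas you have made the intermediate step---that the Adams-type operation $\psi_n$ commutes with $(-)\bcirc(\bar h,h)$---explicit and carried out the case analysis in full; but the underlying argument is the same.
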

\begin{proof}
  Given the two previous results and the fact that $(p_n(x), p_n(y))_{n \geq 1}$ is a set of algebraically independent generators for $\Lambda_{x,y}$, it suffices to check the relation for $\bar{f} = p_n(x)$ or $p_n(y)$, $\bar{g} = p_m(x)$ or $p_n(y)$, and $g = p_k(y)$, which is straightforward.
  The result then follows from \eqref{eq:pn-circ-y} and \eqref{eq:pn-circ-x}.
\end{proof}

Let us now compare our definition of relative plethysm with a related notion introduced by \textcite{Koike1989} to study the characters of tensor products of representations.

\begin{definition}[{\cite[p.~79]{Koike1989}}]\label{def:koike-plethysm}
  Let $\bar{f}, \bar{g} \in \Lambda_{x,y}$ be bisymmetric functions and write $\bar{g} = \sum_{\alpha,\beta} g_{\alpha \beta} x^\alpha y^\beta$.
  Let $s_i, t_i$ be temporary variables such that, for a placeholder $z$,
  \begin{align}
    \prod_i (1+s_i z)
     & = \prod_{\alpha,\beta} (1+x^\alpha y^\beta z)^{g_{\alpha\beta}},
     & \prod_i (1+t_i z)
     & = \prod_{\alpha,\beta} (1+x^\beta y^\alpha z)^{g_{\alpha\beta}}.
  \end{align}
  Then the \emph{Koike plethysm} is given by:
  \begin{equation}
    (f \circ_K g)(x_1, \dots; y_1, \dots) \coloneqq f(s_1, \dots; t_1, \dots).
  \end{equation}
\end{definition}

The operation $\circ_K$ is clearly different from the operation $\bcirc$ that we introduced above.
While $\circ_K$ takes as input two bisymmetric functions, $\bcirc$ takes as input a bisymmetric function and a relative bisymmetric function.
Moreover, $\circ_K$ satisfies a kind of symmetry between $x$ and $y$, whereas our notion does not (and cannot, since it is supposed to model the relative composition product, for which the two colors play very different roles).
Nevertheless, we have the following equality:

\begin{proposition}
  If $\bar{f} = f(x)$ for some $f \in \Lambda_x$, then we have that
  \begin{equation}
    f(x) \bcirc (\bar{g}, 0) = f(x) \circ_K \bar{g}.
  \end{equation}
\end{proposition}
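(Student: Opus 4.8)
The plan is to reduce both sides to the single case $f = p_n$ and then unwind the definitions. Both maps $\bar f \mapsto \bar f \bcirc (\bar g, 0)$ and $\bar f \mapsto \bar f \circ_K \bar g$ are ring endomorphisms in the first slot — for $\bcirc$ this is the proposition recorded above, and for $\circ_K$ it is immediate from Definition~\ref{def:koike-plethysm}, since the map is just evaluation at a fixed alphabet depending only on $\bar g$ — and $f \mapsto f(x)$ is a ring homomorphism $\Lambda_x \to \Lambda_{x,y}$. Since the power sums $(p_n)_{n \geq 1}$ generate $\Lambda_x \otimes \bQ$, it therefore suffices to prove $p_n(x) \bcirc (\bar g, 0) = p_n(x) \circ_K \bar g$ for every $n \geq 1$. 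This reduction also covers $\bar g$ with negative coefficients (or power series in $\hbar$) for free, since that is precisely how both operations are extended beyond the non-negative case.

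For $f = p_n$, the left-hand side is computed by \eqref{eq:pn-circ-x}: $p_n(x) \bcirc (\bar g, 0) = \bar g(x_1^n, x_2^n, \dots; y_1^n, y_2^n, \dots)$. For the right-hand side, write $\bar g = \sum_{\alpha,\beta} g_{\alpha\beta} x^\alpha y^\beta$; the defining identity $\prod_i (1 + s_i z) = \prod_{\alpha,\beta}(1 + x^\alpha y^\beta z)^{g_{\alpha\beta}}$ says exactly that the temporary alphabet $\{s_i\}$ consists of the monomials $x^\alpha y^\beta$, each repeated $g_{\alpha\beta}$ times. Since $\bar f = p_n(x)$ does not involve the second Koike alphabet $\{t_i\}$, we get $p_n(x) \circ_K \bar g = p_n(s_1, s_2, \dots) = \sum_i s_i^n = \sum_{\alpha,\beta} g_{\alpha\beta} (x^\alpha y^\beta)^n = \bar g(x_1^n, \dots; y_1^n, \dots)$, which matches. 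For intuition one can instead argue directly in the non-negative case, skipping the reduction: there $f(x) \bcirc (\bar g, 0) = f(z_1, z_2, \dots)$, because the temporary variables $w_j$ defined by $\prod_j(1+w_j t) = \prod_\beta(1+y^\beta t)^{0} = 1$ are absent and $f(x)$ ignores the $y$-slot, while the $z_i$ are governed by $\prod_i(1+z_i t) = \prod_{\alpha,\beta}(1+x^\alpha y^\beta t)^{\bar g_{\alpha,\beta}}$, which is the same generating-function identity as the one defining the $s_i$; hence $f(z_1, z_2, \dots) = f(s_1, s_2, \dots) = f(x) \circ_K \bar g$.

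I do not expect any real obstacle here: the statement is close to a tautology once one observes that on the relative side the vanishing second component $g = 0$ removes the $y$-carrying temporary variables $w_j$, while on the Koike side the hypothesis $\bar f = f(x)$ removes the second alphabet $t_i$, and that the surviving temporary variables on the two sides are defined by identical generating-function identities. The only points needing minor care are the bookkeeping that these temporary alphabets genuinely decouple (rather than interacting through some term one has overlooked) and the compatibility of the extension of $\circ_K$ to functions with negative coefficients with the power-sum reduction, which holds because $p_n(x) \circ_K (-) = \bar g \mapsto \bar g(x_1^n,\dots;y_1^n,\dots)$ is a ring homomorphism by the computation above.
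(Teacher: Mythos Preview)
Your proof is correct. The paper in fact states this proposition without proof, treating it as immediate from the definitions; your direct argument in the second paragraph is exactly the observation the authors are relying on: with $g=0$ the temporary variables $w_j$ disappear, with $\bar f = f(x)$ the Koike variables $t_i$ are irrelevant, and the remaining $z_i$ and $s_i$ are defined by identical generating-function identities. The power-sum reduction you lead with is valid but unnecessary here, since the direct comparison of temporary alphabets already handles arbitrary $f$ (and the extension to negative coefficients is inherited from how both operations are defined in that regime).
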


\subsection{Relative operads and relative composition product}
\label{sec:rel-op}

\begin{definition}
  A \emph{bisymmetric sequence} is a family $\bar{\sM} = \{ \bar{\sM}(m,n) \}_{m,n \in \bN}$ of graded $\Bbbk$-modules such that each $\bar{\sM}(m,n)$ is endowed with a right action of $\SymG_m \times \SymG_n$.
\end{definition}

Given bisymmetric sequences $\bar{\sM}, \bar{\sN}$, the direct sum $\bar{\sM} \oplus \bar{\sN}$ is defined term-wise.
The tensor product is defined by:
\begin{equation}
  (\bar{\sM} \otimes \bar{\sN})(m,n) \coloneqq \bigoplus_{k+l = m} \bigoplus_{k'+l'=n} \Ind_{\SymG_k \times \SymG_l \times \SymG_{k'} \times \SymG_{l'}}^{\SymG_m \times \SymG_n} \bar{\sM}(k,k') \otimes \bar{\sN}(l,l').
\end{equation}
We let $\Rep_{\Bbbk}(\SymG_* \times \SymG_*)$, or $\Rep(\SymG_* \times \SymG_*)$ when $\Bbbk = \bZ$, be the Grothendieck group of the monoid (for $\oplus$) of isomorphism classes of term-wise finite dimensional bisymmetric sequences, which forms a ring when endowed with $\otimes$.

\begin{definition}
  A \emph{relative bisymmetric sequence} is a pair $(\bar{\sM}, \sM)$ where $\sM$ is a symmetric sequence and $\bar{\sM}$ is a bisymmetric sequence.
\end{definition}

\begin{definition}\label{def:rel-op}
  Given relative bisymmetric sequences $(\bar{\sM}, \sM)$ and $(\bar{\sN}, \sN)$, the \emph{relative composition product} is defined by:
  \begin{equation}
    (\bar{\sM}, \sM) \circ (\bar{\sN}, \sN) \coloneqq \bigl(\bar{\sM} \bcirc (\bar{\sN}, \sN), \; \sM \circ \sN \bigr).
  \end{equation}
  The symmetric sequence $\sM, \sN$ is the usual composition product of $\sM$ and $\sN$.
  The bisymmetric sequence $\bar{\sM} \bcirc (\bar{\sN}, \sN)$ is (where $\sN$ is viewed as concentrated in bi-arity $(0,\_)$):
  \begin{equation}
    \bar{\sM} \bcirc (\bar{\sN}, \sN) \coloneqq \bigoplus_{m,n \in \bN} \bigl( \bar{\sM}(m,n) \otimes \bar{\sN}^{\otimes m} \otimes \sN^{\otimes n} \bigr)^{\SymG_m \times \SymG_n}.
  \end{equation}
\end{definition}

Let $\bar{\sM}$ be a bisymmetric sequence.
The induced (polynomial) functor $\cF_{\bar{\sM}} : \ModK \times \ModK \to \ModK$ is given, for $V, W \in \ModK$, by:
\begin{equation}
  \cF_{\bar{\sM}}(V,W) \coloneqq
  \bigoplus_{m,n \in \bN} \bigl( \bar{\sM}(m,n) \otimes V^{\otimes m} \otimes W^{\otimes n} \bigr)_{\SymG_m \times \SymG_n}.
\end{equation}
In particular, a relative bisymmetric sequence $(\bar{\sM}, \sM)$ induces an endofunctor:
\begin{align}
  \cF_{\bar{\sM},\sM} & : \ModK \times \ModK \to \ModK \times \ModK,
                      & (V,W)                                        & \mapsto \bigl( \cF_{\bar{\sM}}(V,W), \cF_{\sM}(W) \bigr).
\end{align}
We then have a natural isomorphism of endofunctors $\cF_{\bar{\sM}, \sM} \circ \cF_{\bar{\sN}, \sN} \cong \cF_{\bar{\sM} \bcirc (\bar{\sN}, \sN), \; \sM \circ \sN}$.

A \emph{relative operad} is a monoid in the category of relative bisymmetric sequences for the relative composition product.
If $(\bar{\sM}, \sM)$ is a relative operad, then  $\cF_{\bar{\sM}, \sM}$ is a monad.
The terminology comes from~\cite{Voronov1999}, who defined them as pairs $(\sQ, \sP)$ where $\sP$ is a plain operad and $\sQ$ is an operad in the category of right $\sP$-modules.
Such operads are sometimes called ``Swiss-Cheese type operads,'' owing to their original appearance.
They can also be defined as colored operads $\sQ$ with two colors, e.g., red and blue, such that operations with a red output may only have red inputs, and operations with blue output may have both red and blue inputs.

\begin{example}
  The prototypical example of a relative operad is the \emph{relative endomorphism operad} $(\End_{X,Y}, \End_Y)$ of a pair $(X,Y) \in \ModK \times \ModK$, where $\End_Y$ is from Example~\ref{exa:end-x} and $\End_{X,Y}(m,n) \coloneqq \Hom(X^m \times Y^n, X)$.
\end{example}

\subsection{Character map for bisymmetric sequences}

Let us now relate bisymmetric sequences and functions.

\begin{definition}
  Let $\bar{\sM}$ be a finite-type bisymmetric sequence.
  The \emph{character} of $\bar{\sM}$ is the bisymmetric function:
  \begin{equation}
    \chtwo(\bar{\sM}) \in \hat{\Lambda}_{x,y}((\hbar)) \otimes \Bbbk,
  \end{equation}
  whose projection onto $\Lambda_r \otimes \Lambda_s \otimes \Bbbk$ (which consists in polynomials in $r+s$ variables, symmetric in the first $r$ and symmetric in the last $s$), for $r,s \geq 1$, is the graded trace (see Definition~\ref{def:graded-trace}) of the linear map $\cF_{\bar{\sM_d}}(\Bbbk^r, \Bbbk^s) \to \cF_{\bar{\sM_d}}(\Bbbk^r, \Bbbk^s)$ induced by the pair of diagonal matrices $\bigl(\diag(x_1,\dots,x_r), \diag(y_1, \dots, y_s)\bigr)$.
\end{definition}

\begin{proposition}
  The character map is an isomorphism of rings between $\Rep(\SymG_* \times \SymG_*)$ and $\hat{\Lambda}_{x,y}((\hbar))$.
\end{proposition}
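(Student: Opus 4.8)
The plan is to reduce the statement to the one-variable isomorphism of Theorem~\ref{th:iso-char} via the external tensor product of sequences. First I would check that $\chtwo$ is a ring homomorphism: additivity is immediate from the definition, and multiplicativity, $\chtwo(\bar{\sM} \otimes \bar{\sN}) = \chtwo(\bar{\sM}) \cdot \chtwo(\bar{\sN})$, follows from the same term-wise bookkeeping used for symmetric sequences. Unwinding the definition of $\otimes$ on bisymmetric sequences (a Mackey-type rearrangement of the induced modules, just as in the one-variable case) and of the induced polynomial bifunctor, one gets a natural decomposition of $\cF_{\bar{\sM} \otimes \bar{\sN}}(V,W)$ on which the pair $\bigl(\diag(x),\diag(y)\bigr)$ acts compatibly, so that the graded trace factors as a product. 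Hence $\chtwo$ is a ring homomorphism, and it remains to establish bijectivity.

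For bijectivity I would split both sides by bi-arity. As graded abelian groups, $\Rep(\SymG_* \times \SymG_*) = \prod_{m,n \geq 0} \Rep(\SymG_m \times \SymG_n)$ and $\hat{\Lambda}_{x,y}((\hbar)) = \prod_{m,n \geq 0} (\Lambda_x^m \otimes \Lambda_y^n)((\hbar))$, and $\chtwo$ respects this splitting since the bidegree-$(m,n)$ part of $\chtwo(\bar{\sM})$ depends only on $\bar{\sM}(m,n)$. So it suffices to treat each bidegree. Since all irreducible rational representations of $\SymG_n$ are defined over $\bZ$ (see \cite{FultonHarris91}), the irreducible $\SymG_m \times \SymG_n$-modules are exactly the external products $S^\lambda \boxtimes S^\mu$ with $\lambda \dashv m$ and $\mu \dashv n$, so $\Rep(\SymG_m \times \SymG_n)$ is free on their classes (with the internal $\bZ$-grading encoded by $\hbar$ exactly as in Theorem~\ref{th:iso-char}). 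The key computation is that the polynomial bifunctor of an external product factors, $\cF_{M \boxtimes N}(V,W) \cong \cF_M(V) \otimes \cF_N(W)$, whence by multiplicativity of the graded trace $\chtwo(S^\lambda \boxtimes S^\mu) = \ch(S^\lambda)(x) \cdot \ch(S^\mu)(y) = s_\lambda(x)\, s_\mu(y)$. Since $(s_\lambda(x)s_\mu(y))_{\lambda \dashv m,\, \mu \dashv n}$ is a $\bZ$-basis of $\Lambda_x^m \otimes \Lambda_y^n$ (recorded after Proposition~\ref{prop:mult-basis}), $\chtwo$ carries a basis to a basis bidegree by bidegree, hence is bijective there, and therefore bijective overall. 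Combined with the first step, $\chtwo$ is an isomorphism of rings.

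I expect the main obstacle to be organizational rather than mathematical: aligning the $\hbar$-grading, the finite-type hypotheses, and the completed-tensor/product conventions, all of which are already handled in the one-variable Theorem~\ref{th:iso-char} and simply imported here. The two substantive verifications — that $\cF_{M \boxtimes N}$ is the external tensor product of $\cF_M$ and $\cF_N$, and that graded traces are multiplicative under external tensor products — are both immediate, and the remainder is a comparison of bases. An essentially equivalent packaging, which I would use if it comes out shorter, is to first construct the ring map $\Rep(\SymG_*) \hotimes \Rep(\SymG_*) \to \Rep(\SymG_* \times \SymG_*)$, $\sM \hotimes \sN \mapsto \sM \boxtimes \sN$, show it is an isomorphism (again bi-arity by bi-arity, via the classification of irreducibles of $\SymG_m \times \SymG_n$), and then compose with $\ch \hotimes \ch$ from Theorem~\ref{th:iso-char}.
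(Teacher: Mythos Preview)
Your proposal is correct and takes essentially the same approach as the paper: both reduce to Theorem~\ref{th:iso-char} via the key observation that $\chtwo(M \boxtimes M') = \ch(M)(x)\cdot\ch(M')(y)$ for an external tensor product, which then matches bases bidegree by bidegree. The paper's proof is a one-sentence sketch of exactly this, whereas you have spelled out the ring-homomorphism check and the bi-arity splitting in more detail.
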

\begin{proof}
  This follows from Theorem~\ref{th:iso-char} and the easily checked fact that if a $(\SymG_m \times \SymG_n)$-representation $\bar{M}$ is of the form $M \otimes M'$, where $M$ is a representation of $\SymG_m$ and $M'$ is a representation of $\SymG_n$, then the character of $\bar{M}$ is the product of the characters of $M$ and $M'$, i.e., $\chtwo(\bar{M})(x,y) = \ch(M)(x) \cdot \ch(M')(y)$.
\end{proof}

In order to illustrate this notion, let us compute the following important example.

\begin{example}\label{ex:regular-representation}
  Let $G$ be a finite group and $R:=\bQ[G]$ its regular representation.
  Let $\{V_1,\ldots,V_k\}$ be the irreducible representations of $G$.
  Then by Maschke's theorem~\cite[Theorem~4.1.1]{EGHLSVY2009}, we have
  \begin{equation}
    \bQ[G] \cong \bigoplus_i \End(V_i) = \bigoplus_{i} V_i^* \otimes V_i.
  \end{equation}
  Note that the cited theorem only states that this is an isomorphism of left $G$-modules, but it is clear that the map is an isomorphism of bimodules.
  Moreover, $V_i^*$ viewed as a right module (by the inverse of the transpose) is isomorphic to $V_i$.
  For $G = \SymG_n$, if we apply the character map we thus get the following result:
  \begin{equation}\label{eq:regular-rep}
    \chtwo(\bQ[\SymG_n]) = R_n(x,y) \coloneqq \sum_{\lambda \,\dashv\, n} s_\lambda(x) s_\lambda(y).
  \end{equation}
\end{example}

We now get to the main result of this section.

\begin{theorem}\label{thm:relative-composition-plethysm}
  Given a bisymmetric sequence $\bar{\sM}$ and a relative bisymmetric sequence $(\bar{\sN}, \sN)$, we have that
  \begin{equation}
    \chtwo\bigl(\bar{\sM} \bcirc (\bar{\sN}, \sN)\bigr) = \chtwo(\bar{\sM}) \bcirc \bigl( \chtwo(\bar{\sN}), \ch(\sN)\bigr).
  \end{equation}
\end{theorem}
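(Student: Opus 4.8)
The plan is to reduce the statement to the already-established analogue for ordinary symmetric sequences, Proposition~\ref{prop:composition-plethysm}, by projecting onto finitely many variables and diagonalizing. First I would fix $r,s\geq 1$ and show the identity after projection to $\Lambda_r\otimes\Lambda_s$, so that we may work with honest endofunctors of finite-dimensional graded $\Bbbk$-vector spaces. Since both the relative composition product and the relative plethysm are additive (indeed linear over $\hat\Lambda_{x,y}$) in the ``outer'' variable $\bar{\sM}$, I would reduce to the case where $\bar{\sM}$ is concentrated in a single degree; by linearity one may even take $\bar{\sM} = \bar M$ a single $(\SymG_m\times\SymG_n)$-representation, and then by the product formula for characters of external tensor products (proved just above, in the proof that $\chtwo$ is a ring isomorphism) one may take $\bar M = M\otimes M'$, reducing to the polynomial functor $V^{\otimes m}\otimes W^{\otimes n}$.

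The core computation then mimics the proof of Proposition~\ref{prop:composition-plethysm}. Let $(x)=\diag(x_1,\dots,x_r)$ act on $\Bbbk^r$ and $(y)=\diag(y_1,\dots,y_s)$ on $\Bbbk^s$. Applying $\cF_{\bar{\sN},\sN}$, we get a pair of operators $\bigl(\cF_{\bar{\sN}}((x),(y)),\cF_{\sN}((y))\bigr)$ on $\bigl(\cF_{\bar{\sN}}(\Bbbk^r,\Bbbk^s),\cF_{\sN}(\Bbbk^s)\bigr)$. I would diagonalize both, degree by degree, exactly as in the earlier proof: introduce temporary variables $z_{i,k}$ via $\prod_{i,k}(1+z_{i,k}t)=\prod_{\alpha,\beta,k}(1+x^\alpha y^\beta t)^{\bar n_{\alpha,\beta,k}}$ diagonalizing $\cF_{\bar{\sN}}((x),(y))$, where $\bar n_{\alpha,\beta,k}$ are the coefficients of $\chtwo(\bar{\sN})$, and variables $w_{j,l}$ via $\prod_{j,l}(1+w_{j,l}t)=\prod_{\beta,l}(1+y^\beta t)^{n_{\beta,l}}$ diagonalizing $\cF_{\sN}((y))$, where $n_{\beta,l}$ are the coefficients of $\ch(\sN)$. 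Choose graded isomorphisms $\phi,\psi$ conjugating these operators to the diagonal operators $(z)$ on $\Bbbk^{\underline t}$ and $(w)$ on $\Bbbk^{\underline u}$. Then $\cF_{\bar M}$ applied to the pair $(V,W)$ is $V^{\otimes m}\otimes W^{\otimes n}$ (up to the tensor factors $M,M'$, which only contribute their characters multiplicatively), and using the natural isomorphism $\cF_{\bar{\sM},\sM}\circ\cF_{\bar{\sN},\sN}\cong\cF_{\bar{\sM}\bcirc(\bar{\sN},\sN),\,\sM\circ\sN}$ together with conjugation-invariance and multiplicativity of the graded trace, we obtain
\[
  \chtwo\bigl(\bar{\sM}\bcirc(\bar{\sN},\sN)\bigr)(x,y)
  = \tr\bigl(\cF_{\bar M}((z),(w))\bigr)
  = \chtwo(\bar M)\bigl(((-\hbar)^k z_{i,k})_{i,k};\,((-\hbar)^l w_{j,l})_{j,l}\bigr).
\]
By the definition of the relative plethysm on generators (the bookkeeping of the $z_i$, $w_j$ in Equation~\eqref{eq:zi-wj}), the right-hand side is exactly $\chtwo(\bar{\sM})\bcirc\bigl(\chtwo(\bar{\sN}),\ch(\sN)\bigr)$ evaluated at $(x,y)$, which is what we want.

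The step I expect to be the main obstacle is the careful tracking of the grading variable $\hbar$ through the definition of $\bar{\sM}\bcirc(\bar{\sN},\sN)$, because the two ``colors'' are graded independently: the inner bisymmetric sequence $\bar{\sN}$ contributes tensor powers $\bar{\sN}^{\otimes m}$ and the symmetric sequence $\sN$ contributes $\sN^{\otimes n}$, so a monomial in the image picks up $(-\hbar)$ to the power (sum of the $\bar{\sN}$-degrees) plus (sum of the $\sN$-degrees), and one must check that substituting $z_{i,k}\mapsto(-\hbar)^k z_{i,k}$ and $w_{j,l}\mapsto(-\hbar)^l w_{j,l}$ reproduces precisely this. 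This is the direct analogue of the ``to recover $\hbar$'' paragraph at the end of the proof of Proposition~\ref{prop:composition-plethysm}, applied simultaneously to both sets of variables; I would write it out as a short lemma about the bigraded polynomial functor $(V,W)\mapsto V^{\otimes m}\otimes W^{\otimes n}$ and then invoke it. Everything else — linearity reductions, diagonalization, invariance of trace under conjugation — is routine and parallels the ungraded and single-variable cases verbatim.
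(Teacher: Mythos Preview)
Your proposal is correct and follows essentially the same route as the paper's proof: project to finitely many variables, diagonalize the actions of $\cF_{\bar{\sN}}((x),(y))$ and $\cF_{\sN}((y))$, conjugate by the diagonalizing isomorphisms, and use the identification $\cF_{\bar{\sM}\bcirc(\bar{\sN},\sN)}(V,W)\cong\cF_{\bar{\sM}}(\cF_{\bar{\sN}}(V,W),\cF_{\sN}(W))$ together with conjugation-invariance of the trace. The only minor differences are that the paper reduces both sequences to degree zero up front (deferring the $\hbar$-bookkeeping entirely to the analogue in Proposition~\ref{prop:composition-plethysm}), whereas you keep the grading on $(\bar{\sN},\sN)$ and track $\hbar$ explicitly, and that your additional reduction to $\bar M = M\otimes M'$ is not needed---the paper simply works with $\cF_{\bar{\sM}}$ as a black-box bifunctor.
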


\begin{proof}
  The proof is similar to that of Proposition~\ref{prop:composition-plethysm}.
  We deal with the case where both sequences are concentrated in degree zero; the general case follows (just like how we pulled $(-\hbar)$ out of the polynomials in Proposition~\ref{prop:composition-plethysm}).
  We show that, for any $r,s\ge 1$, the identity holds when projecting to $\Lambda_r\otimes\Lambda_s$. Let $(x):\Bbbk^r\to\Bbbk^r$ and $(y):\Bbbk^s\to\Bbbk^s$ denote
  the diagonal endomorphisms with eigenvalues $x_1,\ldots,x_r$ and $y_1,\ldots,y_s$, respectively. For multi-indices $\alpha,\beta$ we then have
  \[ \chtwo(\bar{\sN})(x,y)=\sum_{\alpha,\beta}d_{\alpha,\beta}\lambda^\alpha\mu^\beta, \]
  where $d_{\alpha,\beta}$ is the dimension of the eigenspace of $\cF_{\bar{\sN}}((x),(y)):\cF_{\bar{\sN}}(\Bbbk^r,\Bbbk^s)\to \cF_{\bar{\sN}}(\Bbbk^r,\Bbbk^s)$ corresponding to the eigenvalue $x^\alpha y^\beta$. As before, we also have that
  \[ \ch(\bar{\sN})(y)=\sum_{\beta}d_\beta y^n, \]
  where $d_\beta$ is the dimension of the eigenvalue of $\cF_{\sN}((y)):\cF_{\sN}(\Bbbk^s)\to\cF_{\sN}(\Bbbk^s)$ corresponding to the eigenvalue $y^\beta$. Letting $t:=\dim\cF_{\bar{\sN}(\Bbbk^r,\Bbbk^s)}$ and $u:=\dim\cF_{\sN}(\Bbbk^s)$, we define sets of variables $z_1,\ldots,z_t$ and $w_1,\ldots,w_u$ by
  \[ \prod_i(1+z_it) = \prod_{\alpha,\beta}(1+x^\alpha y^\beta)^{d_{\alpha,\beta}},\quad \prod_j(1+w_jt)=\prod_{\beta}(1+y^\beta t)^{d_\beta}. \]

  Again, we can diagonalize $\cF_{\bar{\sN}}((x),(y))$ and find isomorphisms $\phi\in\Hom_{\Bbbk}(\Bbbk^t,\cF_{\bar{\sN}}(\Bbbk^r,\Bbbk^s))$ and $\psi\in\Hom_{\Bbbk}(\Bbbk^u,\cF_{\bar{\sN}}(\Bbbk^s))$ such that
  $\phi^{-1}\circ \cF_{\bar{\sN}}((x),(y))\circ\phi=(z)$ and $\psi^{-1}\circ\cF_{\sN}((y))\circ\psi=(w)$. Recall that
  \[ \cF_{(\bar{\sM},0)}\circ\cF_{(\bar{\sN},\sN)}=\cF_{(\bar{\sM}\bcirc (\bar{\sN},\sN),0\circ\sN)}, \]
  which means in particular that $\cF_{\bar{\sM}\bcirc (\bar{\sN},\sN)}$ is the functor given on $(V,W)$ by
  \[ \cF_{\bar{\sM}}(\cF_{\bar{\sN}}(V,W),\cF_{\sN}(W)) \]
  and similarly on morphisms. Applying this, we get
  \begin{align*}
    \chtwo(\bar{\sM}\circ(\bar{\sN},\sN))(x,y)
     & =\tr(\cF_{\bar{\sM}\bcirc (\bar{\sN},\sN)}((x),(y))) \\
     & =\tr(\cF_{\bar{\sM}}(\cF_{\bar{\sN}}((x),(y)),\cF_{\sN}((y)))) \\
     & =\tr(\cF_{\bar{\sM}}(\phi\circ(z)\circ\phi^{-1},\psi\circ (w)\circ\psi^{-1})) \\
     & =\tr( \cF_{\bar{\sM}}(\phi,\psi)\circ\cF_{\bar{\sM}}((z),(w))\circ\cF_{\bar{\sM}}(\phi,\psi)^{-1}) \\
     & =\tr(\cF_{\bar{\sM}}((z),(w))) \\
     & =\chtwo(\bar{\sM})(z,w) \\
     & =\chtwo(\bar{\sM}) \bcirc (\chtwo(\bar{\sN}),\ch(\sN))(x,y),
  \end{align*}
  where we have used the definition of $\bcirc $ in the last step.
\end{proof}

\begin{remark}
  Given bisymmetric sequence $\bar{\sM}$, $\bar{\sN}$, the Koike plethysm (Definition~\ref{def:koike-plethysm}) of $\chtwo(\bar{\sM})$ and $\chtwo(\bar{\sN})$ is the character of the bisymmetric sequence given by:
  \begin{equation}
    \bar{\sM} \circ_K \bar{\sN} \coloneqq \bigoplus_{m,n \in \bN} \Bigl( \bar{\sM}(m,n) \otimes \bar{\sN}^{\otimes m} \otimes (\bar{\sN}^{\mathrm{op}})^{\otimes n} \Bigr)_{\SymG_m \times \SymG_n},
  \end{equation}
  where $\bar{\sN}^{\mathrm{op}}(m,n) \coloneqq \bar{\sN}(n,m)$.
\end{remark}

\begin{remark}
  We only deal with relative operads here as they are the ones that we need for our applications.
  However, it would be rather easy to adapt our results to the case or arbitrarily colored operads.
\end{remark}

\section{Bisymmetric functions and prop(erad)s}
\label{sec:propic-char}

A prop is an object which encodes operations that are allowed to have several inputs and several outputs, which can be composed in an arbitrary way.
Properads are a refinement of props, where operations can only be composed along connected graphs (see below).
The underlying objects of props and properads are symmetric bimodules, i.e., families of representations of the groups $\SymG_m^{\mathrm{op}}\times\SymG_n$.
Properads are monoids under a monoidal structure called the connected box product; props are (almost) monoids under the box product.
In this section, we construct operations on bisymmetric functions which decategorifies the box product and the connected box product under the character map.
However, we must make a little detour, as props must be \emph{saturated} symmetric bimodules.
We first define the notion of saturation for bisymmetric functions and construct the (connected) box product of bisymmetric functions using this notion.

References for claims on props below include~\cite{Vallette2007}.

\subsection{Props}
\label{sec:props}

\begin{definition}
  A \emph{symmetric bimodule} is a family $\sM = \{ \sM(m,n) \}_{n,m \in \bN}$ of graded $\Bbbk$-modules such that each $\sM(m,n)$ is endowed with a left $\SymG_m$-action and a right $\SymG_n$-action. We call such a $\sM(m,n)$ a $(\SymG_m,\SymG_n)$-bimodule.
\end{definition}

Of course, this definition is equivalent to the notion of bisymmetric sequences from Section~\ref{sec:bisym-pairs} since a left action can be turned into a right action and conversely.
However, we will use the term ``symmetric bimodule'' to emphasize a difference of points of view.
Bisymmetric sequences are the underlying objects of relative operads, whose operations have multiple inputs of two colors, and a single output.
On the other hand, symmetric bimodules are the underlying objects of props, whose operations have multiple inputs and multiple outputs, all of a single color.

Any symmetric bimodule has an associated bisymmetric sequence, which we will denote by $\sM$ as well.
That bisymmetric sequence consists of operations with two-colored inputs: the inputs of the first color correspond to outputs of the initial symmetric bimodule, while inputs of the second color correspond to inputs of the initial symmetric bimodule.
Given a symmetric bimodule $\sM$, we can therefore define its character $\chtwo(\sM)$ to be the character of the corresponding bisymmetric sequence.
Note that with our conventions, the $x$-variables of the character of a symmetric bimodule corresponds to the ``outputs'' of the symmetric bimodule, while the $y$-variables of the character correspond to the ``inputs''.

\begin{remark}
  A symmetric sequence $\sM$ can be seen as a symmetric bimodule by setting $\sM(1,n) = \sM(n)$ and $\sM(m,n) = 0$ for $m \neq 1$.
  We then have $\chtwo(\sM)(x,y) = p_1(x) \ch(\sM)(y)$.
\end{remark}

The analogue of the composition product of operads is the box product of props.

\begin{definition}[{\cite[Theorem~1]{Vallette2007}}]\label{def:box-prod}
  Let $\sM, \sN$ be symmetric bimodules.
  Their \emph{box product} $\sM \boxtimes \sN$ is the symmetric bimodule defined by:
  \begin{equation}
    (\sM \boxtimes \sN)(m,n) = \bigoplus_{N \geq 1} \Biggl( \bigoplus_{a,b \geq 1} \bigoplus_{\bar{k},\bar{l},\bar{\imath}, \bar{\jmath}} \Bbbk[\SymG_m] \otimes_{\SymG_{\bar{l}}} \sM(\bar{l}, \bar{k}) \otimes_{\SymG_{\bar{k}}} \Bbbk[\SymG_N] \otimes_{\SymG_{\bar{\jmath}}} \sN(\bar{\jmath}, \bar{\imath}) \otimes_{\SymG_{\bar{\imath}}} \Bbbk[\SymG_n] \Biggr) / {\sim},
  \end{equation}
  where the sum runs over $a$-tuples $\bar{k}, \bar{l}$ and $b$-tuples $\bar{\imath}, \bar{\jmath}$ such that $|\bar{l}| = m$, $|\bar{\imath}| = n$, and $|\bar{k}| = |\bar{\jmath}| = N$.
  We set $\sM(\bar{l}, \bar{k}) = \bigotimes_{p=1}^a \sM(l_p, k_p)$ and $\SymG_{\bar{l}} = \prod_{p=1}^a \SymG_{l_p} \subset \SymG_{|\bar{l}|}$ (and similarly for the others).
  The equivalence relation $\sim$ is defined, for $\theta \in \SymG_m$, $\omega \in \SymG_n$, $\sigma \in \SymG_N$, $\nu \in \SymG_a$, and $\tau \in \SymG_b$:
  \begin{equation}
    \theta \otimes \bigotimes_{q=1}^b m_q \otimes \sigma \otimes \bigotimes_{p=1}^a n_p \otimes \omega \sim
    \theta \tau_{\bar{l}}^{-1} \otimes \bigotimes_{q=1}^b m_{\tau^{-1}(q)} \otimes \tau_{\bar{k}} \sigma \nu_{\bar{\jmath}} \otimes \bigotimes_{p=1}^a n_{\nu(p)} \otimes \nu_{\bar{\imath}}^{-1} \omega,
  \end{equation}
  where $\tau_{\bar{l}}$, $\tau_{\bar{k}}$, $\nu_{\bar{\imath}}$, $\nu_{\bar{\jmath}}$ are the block permutations associated to $\tau,\nu$ and the partitions $\bar{k}, \bar{l}, \bar{\imath}, \bar{\jmath}$.
\end{definition}

While the previous definition is convenient to work with when we are interested in the representations of symmetric groups, it is not necessarily the most intuitive.
Just like the composition product of operads can be reformulated in terms of two-level trees, the box product of props can be reformulated in terms of two-level directed graphs, see Figure~\ref{fig:box-prod}.

\begin{figure}[htbp]
  \centering
  \begin{tikzpicture}[baseline={(0,-1)}]
    \draw [dashed] (-1,0) -- (2.5,0);
    \draw [dashed] (-1,-2) -- (2.5,-2);
    \node[draw, circle, fill=white] (x) at (0,0) {$x$};
    \node[draw, circle, fill=white] (y) at (1.5,0) {$y$};
    \node[draw, circle, fill=white] (x') at (0,-2) {$x'$};
    \node[draw, circle, fill=white] (y') at (1.5,-2) {$y'$};

    \draw[<-] (x) -- ++(-.75,0.75) node[above] {2};
    \draw[<-] (x) -- ++(-.25,0.75) node[above] {1};
    \draw[<-] (x) -- ++(.25,0.75) node[above] {5};
    \draw[<-] (x) -- ++(.75,0.75) node[above] {4};

    \draw[<-] (y) -- ++(-.25,0.75) node[above] {3};
    \draw[<-] (y) -- ++(.25,0.75) node[above] {6};

    \draw[->] (x') -- ++(-.25,-0.75) node[below] {2};
    \draw[->] (x') -- ++(.25,-0.75) node[below] {3};

    \draw[->] (y') -- ++(-.5,-0.75) node[below] {4};
    \draw[->] (y') -- ++(0,-0.75) node[below] {1};
    \draw[->] (y') -- ++(.5,-0.75) node[below] {5};

    \draw[->] (x) to[bend right] (x');
    \draw[->] (x) to (y');
    \draw[->] (y) to[bend left] (x');
    \draw[->] (y) to[bend right] (x');
    \draw[->] (y) to[bend left] (y');
  \end{tikzpicture}
  $\in (\sM \boxtimes \sN)(5,6)$.
  \caption{An element of the box product $\sM \boxtimes \sN$, where $x \in \sM(2, 4)$, $y \in \sM(3,2)$, $x' \in \sN(2,3)$, and $y' \in \sN(3,2)$.}
  \label{fig:box-prod}
\end{figure}

The box product is neither associative nor unital.
Consider the symmetric sequence $\sI$ viewed as a symmetric bimodule.
In general, $\sM \boxtimes \sI$ is not equal to $\sM$.
Moreover, if $\sM$ is a symmetric bimodule concentrated in arity $(2,2)$ with a single element, then $((\sI \boxtimes \sI) \boxtimes \sM)(2,2)$ is of dimension $2$, whereas $(\sI \boxtimes (\sI \boxtimes \sM))(2,2)$ is of dimension $1$~\cite[Section~C.4]{LerayPhD}.

\begin{definition}
  Let $\sM$ be a symmetric bimodule.
  The \emph{saturation} of $\sM$ is the symmetric bimodule
  \begin{equation}
    \Sat(\sM) \coloneqq \bigoplus_{n \geq 1} (\sM^{\otimes n})_{\SymG_n},
  \end{equation}
  where the tensor product is given by horizontal concatenation (i.e., Day convolution, see e.g., \cite[Definition~1.6]{HoffbeckLerayVallette2019}).
\end{definition}

Note that if $\sM$ is \emph{output-reduced}
(i.e., $\sM(m,n) = 0$ if $m = 0$) then $\Sat(\sM) \cong \sI \boxtimes \sM $ and if it is \emph{input-reduced} (i.e., $\sM(m,n) = 0$ if $n = 0$) then similarly $\Sat(M)\cong \sM \boxtimes \sI$.
A symmetric bimodule is called \emph{saturated} if it is isomorphic to the saturation of some symmetric bimodule.

\begin{remark}
  Our definition differs from that of~\cite{Vallette2007}, who called a symmetric bimodule $\sM$ saturated if $\sM = \Sat(\sM)$.
  Such bimodules are necessarily trivial~\cite{StollPriv}.
\end{remark}

The saturation of the identity module, $\Sat(\sI)$, plays a special role.
It is given by:
\begin{equation}
  \Sat(\sI)(n,n) =
  \begin{cases}
    \Bbbk[\SymG_n], & \text{if } m = n; \\
    0,              & \text{otherwise}.
  \end{cases}
\end{equation}
Thanks to Example~\ref{ex:regular-representation}, we thus have that $\Sat(\sI)(n,n) \cong \bigoplus_{\lambda \, \dashv \, n} S^\lambda \otimes S^\lambda$ as a $(\SymG_n \times \SymG_n)$-representation.

A \emph{prop} is a symmetric bimodule $\sP$ equipped with a structure map $\sP \boxtimes \sP \to \sP$ satisfying several axioms (associativity, unitality), which we will not detail here.

\begin{remark}
  Note that these axioms are less straightforward than those of operads, as the box product is neither unital nor associative.
  An alternative definition of props is as symmetric bimodules $\sP$ which are monoids for the horizontal product equipped with a monoid structure map $\sP \boxtimes' \sP \to \sP$, where $\boxtimes'$ is the (symmetric monoidal) product defined like $\boxtimes$ except that graphs only have a single vertex on each level.
  We thank \textcite{StollPriv} for pointing this out to us.
\end{remark}

\begin{example}
  The prototypical example of a prop is the \emph{endomorphism prop} $\End_X$ where $\End_X(m,n) = \Hom(X^n, X^m)$.
\end{example}

\subsection{Properads}
\label{sec:properads}

Props are badly behaved for various reasons: the box product is neither associative nor unital, and Koszul duality does not exist for props.
\Textcite{Vallette2007} introduced the notion of \emph{properads}, which are a refinement of props that fix these issues; see also~\cite{HoffbeckLerayVallette2019} for a more recent account.
Properads are symmetric bimodules $\sP$ equipped with a structure map $\sP \boxtimes_c \sP \to \sP$, where the box product is replaced by the \emph{connected box product}.
Roughly speaking, the connected box product $\sM \boxtimes_c \sN$ is given by two-level \emph{connected} directed graphs, decorated by elements of the two symmetric bimodules $\sM$ and $\sN$.

\begin{definition}[{\cite[Proposition~1.5]{Vallette2007}}]\label{def:conn-box-prod}
  Let $\sM, \sN$ be symmetric bimodules.
  Their \emph{connected box product} $\sM \boxtimes_c \sN$ is the symmetric bimodule defined by:
  \begin{equation}
    (\sM \boxtimes_c \sN)(m,n) = \bigoplus_{N \geq 1} \Biggl( \bigoplus_{a,b \geq 1} \bigoplus_{\bar{k},\bar{l},\bar{\imath}, \bar{\jmath}} \Bbbk[\SymG_m] \otimes_{\SymG_{\bar{l}}} \sM(\bar{l}, \bar{k}) \otimes_{\SymG_{\bar{k}}} \Bbbk[\SymG^c_{\bar{k}, \bar{j}}] \otimes_{\SymG_{\bar{\jmath}}} \sN(\bar{\jmath}, \bar{\imath}) \otimes_{\SymG_{\bar{\imath}}} \Bbbk[\SymG_n] \Biggr) / {\sim},
  \end{equation}
  where everything is exactly as in Definition~\ref{def:box-prod}
  The only difference is that $\SymG^c_{\bar{k}, \bar{j}} \subset \SymG_N$ is the set of ``connected permutations,'' i.e., if we view $\bar{k}$ and $\bar{j}$ as partitions of $N$, then $\sigma \in \SymG_N$ belongs to $\SymG^c_{\bar{k}, \bar{j}}$ if and only if the graphs with vertices given by the blocks of the two partitions and edges given by $([i], [\sigma(i)])$ for $i \in \{1,\dots,N\}$ is connected.
\end{definition}

The connected box product is associative and unital, and the unit is given by $\sI$~\cite[Proposition~1.6]{Vallette2007}.
\emph{Properads} are symmetric bimodules which are monoids for the connected box product.
The most important property of the connected box product that we will need is:

\begin{proposition}[{\cite[Proposition~1.7]{Vallette2007}}]\label{prop:conn-box-prod-sat}
  Let $\sM, \sN$ be symmetric bimodules.
  Then the connected box product $\sM \boxtimes_c \sN$ and the box product $\sM \boxtimes \sN$ are related by:
  \begin{equation}
    \Sat(\sM \boxtimes_c \sN) = \sM \boxtimes \sN.
  \end{equation}
\end{proposition}

\subsection{Character of the saturation and the box product}
\label{sec:char-sat-box}

The following proposition gives the relationship between the saturation of a symmetric bimodule and the relative composition product of Section~\ref{sec:rel-op}.
Let $\Com_c^0$ be the bisymmetric sequence given by $\Com_c^0(n,0) \coloneqq \triv_n$ for $n > 0$ and $\Com_c^0(n,m) = 0$ for $m \neq 0$ or $(m,n) = (0,0)$.
The following proposition is a reformulation of the description from~\cite[p.~4874]{Vallette2007}.

\begin{proposition}
  Let $\sM$ be a symmetric bimodule viewed as a bisymmetric sequence.
  Then $\Sat(\sM) = \Com_c^0 \bcirc (\sM, 0)$ as a bisymmetric sequence.
\end{proposition}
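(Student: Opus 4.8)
The plan is to unwind both sides of the claimed identity and observe that they agree term-by-term. Recall that for a bisymmetric sequence $\bar{\sN}$ (our $\sM$ viewed as such) and the relative composition product of Definition~\ref{def:rel-op}, we have
\[
  \Com_c^0 \bcirc (\sM, 0) = \bigoplus_{m,n \in \bN} \bigl( \Com_c^0(m,n) \otimes \sM^{\otimes m} \otimes 0^{\otimes n} \bigr)^{\SymG_m \times \SymG_n}.
\]
The term for $n \neq 0$ vanishes because $0^{\otimes n} = 0$, and the term $(m,n)=(0,0)$ vanishes because $\Com_c^0(0,0)=0$; for $n=0$ and $m \geq 1$ we have $\Com_c^0(m,0) = \triv_m$, so the right-hand side collapses to
\[
  \bigoplus_{m \geq 1} \bigl( \triv_m \otimes \sM^{\otimes m} \bigr)^{\SymG_m} = \bigoplus_{m \geq 1} (\sM^{\otimes m})^{\SymG_m}.
\]
Over a field of characteristic zero, invariants and coinvariants under a finite group coincide, so $(\sM^{\otimes m})^{\SymG_m} \cong (\sM^{\otimes m})_{\SymG_m}$, which is exactly the $m$-th summand of $\Sat(\sM)$.

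The one point that requires care is matching the tensor product $\sM^{\otimes m}$ appearing here with the horizontal (Day convolution) tensor product used in the definition of $\Sat$. First I would recall that, under the dictionary of Section~\ref{sec:props}, a symmetric bimodule $\sM$ corresponds to the bisymmetric sequence whose $x$-variables record outputs and $y$-variables record inputs; the tensor product of bisymmetric sequences from Section~\ref{sec:rel-op}, applied to the underlying bisymmetric sequences, is precisely the horizontal concatenation of symmetric bimodules, since both are given by inducing up an external tensor product along the inclusion $\SymG_k \times \SymG_l \hookrightarrow \SymG_{k+l}$ in each color separately. Hence $\sM^{\otimes m}$ computed in bisymmetric sequences agrees with $\sM^{\otimes m}$ computed via Day convolution, and the $\SymG_m$-action permuting the $m$ tensor factors matches on both sides.

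With these identifications in place the proof is a short chain of equalities: expand $\Com_c^0 \bcirc (\sM, 0)$ using Definition~\ref{def:rel-op}, kill the summands with $n \neq 0$ or $(m,n)=(0,0)$, substitute $\Com_c^0(m,0) = \triv_m$, pass from invariants to coinvariants using characteristic zero, and recognize the result as $\bigoplus_{m \geq 1}(\sM^{\otimes m})_{\SymG_m} = \Sat(\sM)$. I expect the main (and only) obstacle to be the bookkeeping in the previous paragraph: being careful that the ``$\bcirc$ with $\sN = 0$'' convention genuinely forces $n = 0$, and that the monoidal structure used in $\Sat$ is the same external/horizontal tensor product that enters the relative composition product. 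Everything else is formal.
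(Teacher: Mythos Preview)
Your proof is correct. The paper does not actually give a proof of this proposition: it merely states that the result is a reformulation of the description from \cite[p.~4874]{Vallette2007} and moves on. Your direct unwinding of the relative composition product, together with the identification of the bisymmetric tensor product with horizontal concatenation, is exactly the argument one would write down to justify the claim, and there are no gaps.
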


Note that the character of $\Com_c^0$ is given by:
\begin{equation}
  \chtwo(\Com_c^0) = \sum_{n \geq 1} h_n(x) = \exp \Bigl( \sum_{n \geq 1} \frac{p_n(x)}{n} \Bigr) - 1 \in \hat{\Lambda}_{x,y}.
\end{equation}

\begin{definition}\label{def:sat-char}
  The \emph{saturation} of a bisymmetric function $\bar{f}$ such that $\bar{f}(0,0)$ is concentrated in positive $\hbar$-degree is given by:
  \begin{equation}
    \Sat(\bar{f}) \coloneqq \sum_{n \geq 1} h_n(x) \bcirc (\bar{f}, 0).
  \end{equation}
\end{definition}

Note that $\Sat(\bar{f}) = \mathbf{exp}(\bar{f}) - 1$, where $\mathbf{exp}$ is a 2-variable version of the plethystic exponential~\cite[Definition~2.3.9]{BergstroemDiaconuPetersenWesterland2023}.

\begin{corollary}\label{cor:bichar-of-sat}
  The character of the saturation of a finite-type symmetric bimodule $\sM$ such that $\sM(0,0)$ is concentrated in positive degrees is given by:
  \begin{equation}
    \chtwo(\Sat(\sM)) = \Sat(\chtwo(\sM)).
  \end{equation}
\end{corollary}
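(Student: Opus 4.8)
The plan is to reduce everything to Theorem~\ref{thm:relative-composition-plethysm} via the identity $\Sat(\sM) = \Com_c^0 \bcirc (\sM, 0)$ provided by the preceding proposition (an isomorphism of bisymmetric sequences, hence an equality of characters). First I would verify that the two sides of the asserted equation are defined. On the categorified side: since $\sM$ is finite-type and $\sM(0,0)$ is concentrated in positive degrees, in any fixed bi-arity and any fixed cohomological degree only finitely many of the summands $(\sM^{\otimes k})_{\SymG_k}$ of $\Sat(\sM)$ can contribute — adjoining a tensor factor from $\sM(0,0)$ strictly raises the degree — so $\Sat(\sM)$ is again finite-type and $\chtwo(\Sat(\sM))$ makes sense. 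On the symmetric-function side, the hypothesis on $\sM(0,0)$ translates under $\chtwo$ into the statement that $\chtwo(\sM)(0,0)$ is concentrated in strictly positive $\hbar$-degree, which is precisely the condition under which $\Sat(\chtwo(\sM)) = \sum_{n \geq 1} h_n(x) \bcirc (\chtwo(\sM), 0)$ is defined.

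Granting this, the computation is short. Using the proposition and then Theorem~\ref{thm:relative-composition-plethysm} with $\bar{\sM} = \Com_c^0$ and $(\bar{\sN}, \sN) = (\sM, 0)$, one has
\[
  \chtwo(\Sat(\sM)) = \chtwo\bigl( \Com_c^0 \bcirc (\sM, 0) \bigr) = \chtwo(\Com_c^0) \bcirc \bigl( \chtwo(\sM), \ch(0) \bigr).
\]
Now $\ch(0) = 0$, and $\chtwo(\Com_c^0) = \sum_{n \geq 1} h_n(x)$ was recorded above, so the right-hand side equals $\bigl( \sum_{n \geq 1} h_n(x) \bigr) \bcirc (\chtwo(\sM), 0)$, which is by definition $\Sat(\chtwo(\sM))$.

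I do not expect any serious obstacle; the only points deserving attention are the two well-definedness checks above, which is why I would front-load them, and, if one wants to be scrupulous, confirming that the implicit hypotheses under which Theorem~\ref{thm:relative-composition-plethysm} applies — finite-typeness of the relative composition product $\Com_c^0 \bcirc (\sM,0)$ and convergence of the infinite sum $\sum_{n \geq 1} h_n(x)$ in the completed ring — hold for exactly the same reason (the positivity of the degrees in $\sM(0,0)$). The rest is unwinding definitions: matching $\Sat$ for symmetric bimodules with $\Sat$ for bisymmetric functions, and noting that the symmetric-sequence component $0 \circ 0 = 0$ of the relative composition product contributes nothing.
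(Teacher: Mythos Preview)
Your argument is correct and is exactly the intended one: the paper states this as an immediate corollary (with no written-out proof) of the preceding proposition $\Sat(\sM) = \Com_c^0 \bcirc (\sM,0)$, the computation of $\chtwo(\Com_c^0)$, and Theorem~\ref{thm:relative-composition-plethysm}, which is precisely the chain you spell out. Your additional well-definedness checks are appropriate and match the hypothesis the paper imposes on $\sM(0,0)$.
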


\begin{example}
  We can use this result compute the saturation of the symmetric bimodule $\sI$.
  Its character is simply given by $\chtwo(\sI) = h_1(x) h_1(y) = \sum_{i,j} x_i y_j$.
  We thus have that
  \begin{multline}\label{eq:sat-i}
    \chtwo(\Sat(\sI)) = \sum_{n \geq 1} h_n(x) \bcirc (h_1(x) h_1(y), 0) = \\ = \sum_{n \geq 1} \sum_{\substack{i_1 \leq \dots \leq i_n \\ j_1 \leq \dots \leq j_n}} x_{i_1} y_{j_1} \cdots x_{i_n} y_{j_n} = \prod_{i,j} (1 - x_i y_j)^{-1} - 1.
  \end{multline}
  Comparing this with \eqref{eq:regular-rep}, we recover~\cite[Chap.~I, Eq.~(4.3)]{Macdonald1995}.
\end{example}

Finally, let us provide a formula for the character of the box product of two symmetric bimodules.
Unfortunately, this formula is not as nice as the plethysms defined in the previous sections, but it is still useful computationally.

To define it, we recall that there is a scalar product on $\Lambda_{x,y}$, which is given by
\begin{equation}\label{eq:scalar-product}
  \langle p_{\lambda}(x)p_{\lambda'}(y),\,p_\mu(x)p_{\mu'}(y)\rangle \coloneqq \delta_{\lambda,\mu}\delta_{\lambda'\mu'} z_\lambda z_\mu,
\end{equation}
on the basis $\{p_\lambda(x)p_\mu(y)\}_{\lambda,\mu}$, where $z_\lambda \coloneqq \prod_i i^{m_i} m_i!$ and $m_i$ is the number of occurences of $i$ in the partition $\lambda$ (and similarly for $z_\mu$).
If $T:\Lambda_{x,y}\to\Lambda_{x,y}$ is a linear map, we therefore have a well-defined notion of adjoint map $T^{\perp}:\Lambda_{x,y}\to\Lambda_{x,y}$. If $f\in\Lambda_{x,y}$, we denote by $f^\perp$ the adjoint
of the map given by multiplication by $f$.

\begin{example}
  Note that the scalar product and the adjoint are the canonical extensions of the corresponding notions on $\Lambda$.
  According to \cite[Example 5.3]{Macdonald1995}, if $f = f(p_1, p_2, \dots) \in \Lambda$ is expressed as a polynomial of power sums, then
  \begin{equation}\label{eq:adj-pn}
    p_n^\perp(f) = n \frac{\partial f}{\partial p_n}(p_1, p_2, \dots).
  \end{equation}
  For example, if $f = p_1^2 + p_2$, then $p_1^\perp(f) = 2p_1$ and $p_2^\perp(f) = 1$.
  These formulas allow computing the action of $p_n(x)^\perp$ and $p_n(y)^\perp$ on $\Lambda_{x,y}$ in a similar way.
  Since $f \mapsto f^\perp$ is a ring morphism, this completely characterizes adjoints.
\end{example}

Recall the bisymmetric function $R_n(x,y)$ from \eqref{eq:regular-rep}, \eqref{eq:sat-i}.
With this, we can define the box product of bisymmetric functions as follows:

\begin{definition}
  Let $\bar{f}(x,y), \bar{g}(x,y)$ be bisymmetric functions.
  Their \emph{box product} $\bar{f} \boxtimes \bar{g}$ is the bisymmetric function:
  \begin{equation}
    (\bar{f} \boxtimes \bar{g})(x,y) \coloneqq \Bigl( \sum_{n \geq 1} (R_n(x',y'))^{\perp} \bigl( \Sat(\bar{f})(x,y') \Sat(\bar{g})(x',y) \bigr) \Bigr)|_{x' = y' = 0}.
  \end{equation}
\end{definition}
The idea to use adjoints to model ``joining vertices by an edge'' is inspired by the proof of~\cite[Theorem~8.13]{GetzlerKapranov1998}.
The formula for the box product of bisymmetric functions is quite a bit more complicated than the formulas for the plethysm and relative plethysm above,
but is still computable. In particular, thanks to \cite[Equation (4.1)]{Macdonald1995}, we have
\begin{equation}
  R_n(x,y)^\perp = \sum_{\lambda \, \dashv \, n} z_\lambda^{-1} p_\lambda(x)^\perp p_\mu(y)^\perp,
\end{equation}
where $z_\lambda$ is defined above and $p_\lambda(x)^\perp$, $p_\mu(y)^\perp$ can be computed using Equation~\eqref{eq:adj-pn} and the multiplicativity of $f \mapsto f^\perp$.

\begin{theorem}\label{thm:box-product-character}
  Let $\sM$, $\sN$ be finite-type symmetric bimodules.
  The character of their box product satisfies:
  \begin{equation}
    \chtwo(\sM \boxtimes \sN) = \chtwo(\sM) \boxtimes \chtwo(\sN).
  \end{equation}
\end{theorem}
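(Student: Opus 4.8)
The plan is to reduce the statement to an explicit combinatorial identity by unwinding both sides in terms of the scalar product on $\Lambda_{x,y}$, just as the definition of the box product of bisymmetric functions is engineered to do. First I would observe that both $\chtwo$ and the box product $\boxtimes$ are additive (resp.\ bilinear) in each argument, so it suffices to treat $\sM$ and $\sN$ that are concentrated in a single bi-arity $\sM(\ell,k)$, $\sN(j,i)$; as in the proofs of Proposition~\ref{prop:composition-plethysm} and Theorem~\ref{thm:relative-composition-plethysm}, the $\hbar$-grading can be recovered at the end by the same bookkeeping, so I would work in the ungraded case. The key input is Corollary~\ref{cor:bichar-of-sat}, which tells us that $\chtwo(\Sat(\sM)) = \Sat(\chtwo(\sM))$ and likewise for $\sN$; this matches the appearance of $\Sat(\bar f)$ and $\Sat(\bar g)$ in the definition of $\bar f \boxtimes \bar g$.

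Next I would interpret the adjoint operator $R_n(x',y')^\perp$ representation-theoretically. Since $R_n(x,y) = \chtwo(\Bbbk[\SymG_n]) = \sum_{\lambda\dashv n} s_\lambda(x)s_\lambda(y)$ is the character of the regular representation of $\SymG_n$ viewed as a $(\SymG_n,\SymG_n)$-bimodule, and since under the character isomorphism the scalar product $\langle\,-,-\rangle$ computes $\dim\Hom_{\SymG_n}(-,-)$ (the standard fact $\langle s_\lambda, s_\mu\rangle = \delta_{\lambda\mu}$ extended to $\Lambda_{x,y}$), applying $R_n(x',y')^\perp$ to a product $\Sat(\bar f)(x,y')\Sat(\bar g)(x',y)$ and then setting $x'=y'=0$ is exactly the decategorification of the operation ``take the $y'$-outputs of $\Sat(\sM)$ and the $x'$-inputs of $\Sat(\sN)$, each an $\SymG_N$-representation via the block structure, and form the coinvariants $(-\otimes_{\SymG_N}-)$'' — i.e.\ of gluing along an edge set of size $N$. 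Concretely: $\langle \chtwo(A)(x',\bullet),\, R_N(x',y')\rangle_{x'}$ applied in the right slot realizes $A \otimes_{\SymG_N} \Bbbk[\SymG_N]$, and iterating across both sides produces precisely the middle $\Bbbk[\SymG_N]$ in Definition~\ref{def:box-prod}. I would make this precise by first proving a lemma: for symmetric bimodules $A$ (input-reduced in the variable being glued) and $B$, one has $\chtwo\bigl((A\otimes_{\SymG_N} \Bbbk[\SymG_N]\otimes_{\SymG_N} B)\bigr) = R_N^\perp\bigl(\chtwo(A)(x,y')\chtwo(B)(x',y)\bigr)|_{x'=y'=0}$, using that the coinvariants of a $(\SymG_N\times\SymG_N)$-bimodule $V$ under the diagonal identification with the regular bimodule correspond on characters to pairing with $R_N$.

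With that lemma in hand, the main step is to match the combinatorial sums. On the algebraic side, $\Sat(\chtwo(\sM)) = \sum_{a\ge 1}(\chtwo(\sM)^{\otimes a})_{\SymG_a}$ decategorifies $\Sat(\sM) = \bigoplus_{a\ge1}(\sM^{\otimes a})_{\SymG_a}$ (horizontal Day convolution); expanding the $a$-fold tensor power term by term produces exactly the sum over $a$-tuples $\bar k,\bar l$ with $|\bar l|=m$, $|\bar k|=N$ together with the induction $\Bbbk[\SymG_m]\otimes_{\SymG_{\bar l}}(-)$, which is what the block-permutation equivalence relation $\sim$ on the $\theta$-factor encodes. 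Doing the same for $\sN$ with $b$-tuples $\bar\imath,\bar\jmath$, and then applying the gluing lemma along the $N$ middle strands, reproduces Definition~\ref{def:box-prod} verbatim after taking characters. I expect the main obstacle to be purely bookkeeping: carefully checking that the block-permutation relations $\tau_{\bar l},\tau_{\bar k},\nu_{\bar\imath},\nu_{\bar\jmath}$ in the definition of $\sim$ correspond, on the level of characters, exactly to the $\SymG_a$- and $\SymG_b$-coinvariants built into the plethystic exponential $\Sat$, and that the $\SymG_N$-coinvariants in the middle are faithfully captured by a single application of $R_N^\perp$ rather than requiring a sum over finer data — but this is forced once one notes that $\Bbbk[\SymG_N]\otimes_{\SymG_{\bar\jmath}}(-)$ absorbs the choice of how the $N$ strands are partitioned into the $\bar k$- and $\bar\jmath$-blocks. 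Assembling these identifications, summing over $N\ge1$, $a,b\ge1$, and all tuples, and finally reinstating the $\hbar$-grading as in Proposition~\ref{prop:composition-plethysm} completes the proof.
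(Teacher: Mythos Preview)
Your approach is essentially the paper's: prove a lemma identifying $R_N(x',y')^\perp$ with the representation-theoretic operation of gluing along $\Bbbk[\SymG_N]$ (the paper states this as Lemma~\ref{lem:char-of-adjoint}, citing Getzler--Kapranov, combined with self-duality of the regular representation), then recognize $(\sM\boxtimes\sN)(m,n)$ as $\bigoplus_{N\ge1}\Sat(\sM)(m,N)\otimes_{\SymG_N}\Bbbk[\SymG_N]\otimes_{\SymG_N}\Sat(\sN)(N,n)$ and apply Corollary~\ref{cor:bichar-of-sat}. The paper packages this last identification slightly more cleanly by first observing that one may assume $\sM$ input-reduced and $\sN$ output-reduced (the other bi-arities do not contribute to $\sM\boxtimes\sN$), so that $\Sat(\sM)=\sM\boxtimes\sI$ and $\Sat(\sN)=\sI\boxtimes\sN$; your ``bookkeeping'' paragraph is doing the same verification by hand.

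One genuine slip: your opening reduction is wrong. The box product is \emph{not} bilinear in its arguments---Definition~\ref{def:box-prod} involves $\sM(\bar l,\bar k)=\bigotimes_p\sM(l_p,k_p)$ for all $a\ge1$, and likewise on the function side $\Sat(\bar f)$ is a plethystic exponential, not linear in $\bar f$. So you cannot reduce to $\sM,\sN$ concentrated in a single bi-arity by additivity. Fortunately the rest of your argument never actually uses this reduction: once you expand $\Sat(\sM)$ as $\bigoplus_{a\ge1}(\sM^{\otimes a})_{\SymG_a}$ and match with the box-product formula, you are handling all bi-arities at once. Just delete the first reduction step.
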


For the proof, we will need this representation theoretic lemma:

\begin{lemma}\label{lem:char-of-adjoint}
  Let $V$ be a $(\SymG_m,\SymG_n)$-bimodule and $W$ a $(\SymG_p,\SymG_q)$-bimodule, for $m\le p$ and $n\le q$. Then
  $$\ch(V)^\perp \ch(W)=\ch\left(\Hom_{\SymG_m\times\SymG_n}(V,\Res_{(\SymG_{m}\times\SymG_{p-m})\times(\SymG_{n}\times\SymG_{q-n})}^{\SymG_{p}\times\SymG_q}W)\right).$$
\end{lemma}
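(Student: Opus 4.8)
The plan is to reduce the statement to the standard adjunction between $\Res$ and a $\Hom$-functor, combined with the description of the scalar product on $\Lambda_{x,y}$ in terms of characters. First I would recall the classical fact (valid over a field of characteristic zero) that for $\SymG_k$-modules $A$, $B$, the scalar product of characters computes the dimension of the space of equivariant maps: $\langle \ch(A), \ch(B) \rangle = \dim \Hom_{\SymG_k}(A, B)$, and more refined: if $A$ is a $(\SymG_m,\SymG_n)$-bimodule and $C$ is a $(\SymG_m, \SymG_n)$-bimodule, then $\langle \chtwo(A), \chtwo(C) \rangle_{\Lambda_{x,y}} = \dim \Hom_{\SymG_m \times \SymG_n}(A, C)$, using the orthonormality (up to the $z_\lambda z_\mu$ normalization) of the $p_\lambda(x) p_\mu(y)$ basis. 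The graded/$\hbar$ version follows by tracking degrees as in the proof of Proposition~\ref{prop:composition-plethysm}.

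Next, the key identity to establish is that, for a $(\SymG_m, \SymG_n)$-bimodule $V$, the operator $\ch(V)^\perp$ on $\Lambda_{x,y}$ is the character-level shadow of the functor sending a $(\SymG_p, \SymG_q)$-bimodule $W$ (with $m \le p$, $n \le q$) to $\Hom_{\SymG_m \times \SymG_n}(V, \Res\, W)$, where $\Res$ is restriction to $(\SymG_m \times \SymG_{p-m}) \times (\SymG_n \times \SymG_{q-n})$ and the $\SymG_m \times \SymG_n$-action used for the Hom is the "internal" one on the first factors, leaving a residual $(\SymG_{p-m}, \SymG_{q-n})$-bimodule. I would verify this by checking it on a basis: it suffices to take $V$ and $W$ each of the form (Specht module) $\otimes$ (Specht module), so that $\chtwo(V) = s_\lambda(x) s_\mu(y)$ and $\chtwo(W) = s_{\lambda'}(x) s_{\mu'}(y)$. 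On the $x$-side, $s_\lambda(x)^\perp s_{\lambda'}(x) = \sum_\nu c^{\lambda'}_{\lambda \nu} s_\nu(x)$ where $c^{\lambda'}_{\lambda\nu}$ are Littlewood–Richardson coefficients, and by the branching rule (Pieri/Littlewood–Richardson for $\Ind$-$\Res$), $c^{\lambda'}_{\lambda\nu} = \dim \Hom_{\SymG_m \times \SymG_{p-m}}(S^\lambda \otimes S^\nu, \Res S^{\lambda'})$, so $\sum_\nu c^{\lambda'}_{\lambda\nu} S^\nu \cong \Hom_{\SymG_m}(S^\lambda, \Res^{\SymG_p}_{\SymG_m \times \SymG_{p-m}} S^{\lambda'})$ as $\SymG_{p-m}$-modules. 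Taking the product of the $x$- and $y$-sides (which is exactly what the definition \eqref{eq:scalar-product} of the scalar product does, factoring through the tensor product $\Lambda_x \otimes \Lambda_y$) and using that $\Hom$ over $\SymG_m \times \SymG_n$ of an external tensor product factors as the external tensor product of the $\Hom$'s, yields precisely $\chtwo$ of the claimed bimodule.

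Finally I would assemble: since characters are multiplicative for external tensor products and the general $V$, $W$ are virtual sums of such product-type bimodules, bilinearity of $\perp$ (it is a ring morphism in the first slot and $\Bbbk$-linear in the second) and additivity of $\Hom$ and $\Res$ extend the identity from the basis to all of $\Rep(\SymG_m \times \SymG_n)$ and $\Rep(\SymG_p \times \SymG_q)$; the grading bookkeeping with $\hbar$ is identical to that in the proof of Proposition~\ref{prop:composition-plethysm}, replacing each module by its graded pieces and multiplying by $(-\hbar)^d$. The main obstacle I anticipate is bookkeeping the residual group actions correctly: one must be careful that $\ch(V)^\perp$ extracts exactly the "first $m$ boxes / first $n$ boxes" copy of $V$ inside $W$ and leaves a genuine $(\SymG_{p-m}, \SymG_{q-n})$-bimodule, and that the left-versus-right action conventions (since symmetric bimodules carry a left $\SymG_m$ and right $\SymG_n$ action, whereas bisymmetric sequences carry two right actions) are handled consistently with the dualization $V_i^* \cong V_i$ used in Example~\ref{ex:regular-representation}; once the branching rule is invoked in the right form, the rest is the standard character-theoretic dictionary.
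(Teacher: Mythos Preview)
Your proposal is correct and follows essentially the same route as the paper: reduce to the one-variable statement via the tensor factorization $\Lambda_{x,y} = \Lambda_x \otimes \Lambda_y$ and the fact that external tensor products of Specht modules span. The paper's own proof is a single sentence citing the one-variable analogue \cite[Proposition~8.10]{GetzlerKapranov1998}; you have additionally sketched why that one-variable case holds (via $s_\lambda^\perp s_{\lambda'} = s_{\lambda'/\lambda}$ and the branching rule), which is more than the paper does but not a different strategy.
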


\begin{proof}
  This follows immediately from the analogous statement for one-variable symmetric functions, see~\cite[Proposition~8.10]{GetzlerKapranov1998} (where the adjoint $f^\perp$ is denoted $D(f)$).
\end{proof}

\begin{proof}[Proof of Theorem \ref{thm:box-product-character}]
  Let us start by introducing some auxiliary notation. If $\bar{f}\in\Lambda_{x,y}$, let us write $\bar{f}^{m,n}$ for the part of bidegree $(m,n)$, $\bar{f}^{m,-}$ for the part of degree $m$ in the $x$-variable and similarly $\bar{f}^{-,n}$ for the part of degree $n$ in the $y$-variable.
  If $\bar{f},\bar{g}\in\Lambda_{x,y}$, then in this notation, an equivalent way of writing the definition of their box product is
  \begin{align*}
    (\bar{f}\boxtimes\bar{g})(x,y)=\sum_{N\ge 1}R_N(x',y')^\perp\left(\Sat(\bar{f})^{-,N}(x,y')\Sat(\bar{g})^{N,-}(x',y)\right).
  \end{align*}
  We will use this alternative definition in the proof.

  Now note that if either $M(m,0)\neq 0$ or $N(0,n)\neq 0$, then these biarities do not contribute to the box product, so we can, without loss of generality, assume that $M$ is input-reduced and $N$ is output-reduced.
  This implies that $\Sat(\sM)=\sM\boxtimes\sI$ and $\Sat(\sN)=\sI\boxtimes\sN$, so since $\sI$ is concentrated in biarities $(n,n)$, we have
  \begin{align*}
    \Sat(\sM)(m,n) & =\left(\bigoplus_{a\ge 1}\bigoplus_{\bar{k},\bar{l}}\Bbbk[\SymG_m]\otimes_{\SymG_{\bar{l}}}\sM(\bar{l},\bar{k})\otimes_{\SymG_{\bar{l}}}\Bbbk[\SymG_n]\otimes\Bbbk[\SymG_n]\right)/\sim, \\
                   & \cong\left(\bigoplus_{a\ge 1}\bigoplus_{\bar{k},\bar{l}}\Bbbk[\SymG_m]\otimes_{\SymG_{\bar{l}}}\sM(\bar{l},\bar{k})\otimes_{\SymG_{\bar{l}}}\Bbbk[\SymG_n]\right)/\sim'
  \end{align*}
  where the equivalence relation $\sim'$ is defined, for $\theta\in\SymG_m$, $\omega\in\SymG_n$ and $\tau\in\SymG_a$, by
  $$\theta\otimes m_1\otimes\cdots\otimes m_a\otimes \omega\sim' \theta\tau_{\bar{l}}^{-1}\otimes m_{\tau^{-1}(1)}\otimes\cdots\otimes m_{\tau^{-1}(a)}\otimes\tau_{\bar{k}}\omega.$$
  Similarly, we obtain
  \begin{align*}
    \Sat(\sM)(m,n)\cong\left(\bigoplus_{b\ge 1}\bigoplus_{\bar{i},\bar{j}}\Bbbk[\SymG_m]\otimes_{\SymG_{\bar{j}}}\sN(\bar{j},\bar{i})\otimes_{\SymG_{\bar{j}}}\Bbbk[\SymG_n]\right)/\sim'.
  \end{align*}
  We see that for any $N\ge 1$, we have
  \begin{align*}
          & \Sat(\sM)(m,N)\otimes_{\SymG_N}\Bbbk[\SymG_N]\otimes_{\SymG_N}\Sat(\sN)(N,n) \\
    \cong & \left(\bigoplus_{a,b\ge 1}\bigoplus_{\bar{k},\bar{l},\bar{i},\bar{j}}\Bbbk[\SymG_m]\otimes_{\SymG_{\bar{l}}}\sM(\bar{l},\bar{k})\otimes_{\SymG_{\bar{l}}}\Bbbk[\SymG_N]\otimes_{\SymG_{\bar{j}}}\sN(\bar{j},\bar{i})\otimes_{\SymG_{\bar{j}}}\Bbbk[\SymG_n]\right)/\sim'' \\
  \end{align*}
  where $\sim''$ is the equivalence relation generated by the equivalence relations in the two tensor factors. It is easily verified that this is the same equivalence relation as in the definition of the box product, so we obtain
  \begin{align*}
    (\sM\boxtimes\sN)(m,n) & \cong\bigoplus_{N\ge 1}\Sat(\sM)(m,N)\otimes_{\SymG_N}\Bbbk[\SymG_N]\otimes_{\SymG_N}\Sat(\sN)(N,n) \\
                           & \cong\bigoplus_{N\ge 1}\Hom_{\SymG_N\times\SymG_N}\left(\Bbbk[\SymG_N],\Sat(\sM)(m,N)\otimes\Sat(\sN)(N,n)\right)
  \end{align*}
  where in the second step we used the self-duality of the regular representation $\Bbbk[\SymG_N]$. Applying Lemma \ref{lem:char-of-adjoint} and Corollary \ref{cor:bichar-of-sat}, we obtain
  \begin{align*}
    \ch(\sM\boxtimes\sN) & =\sum_{N\ge 1}R_n(x',y')^\perp\left( \ch\left(\Sat(\sM)\right)^{-,N}(x,y')\ch\left(\Sat(\sN)\right)^{N,-}(x',y)\right) \\
                         & =\ch(\sM)\boxtimes\ch(\sN).\qedhere
  \end{align*}
\end{proof}

Finally, thanks to Proposition~\ref{prop:conn-box-prod-sat}, we can handle the connected box product as well.

\begin{definition}
  Define two elements of $E, L \in \Lambda_{x}$ by:
  \begin{align}
    E & \coloneqq \sum_{r \geq 0} h_r(x) = \exp \bigl( \sum_{r \geq 1} \frac{p_r(x)}{r} \bigr) - 1,
    & L & \coloneqq \sum_{k \geq 1} \frac{\mu_k}{k} \log(1 + p_k(x)).
  \end{align}
\end{definition}
These two elements satisfy the relation $(E-1) \circ L = h_1 = L \circ (E-1)$~\cite[Proposition~2.3.7]{BergstroemDiaconuPetersenWesterland2023}.
Note that for $f \in \Lambda_{x,y}$, we have $(E - 1) \bcirc f = \Sat(f)$ (Definition~\ref{def:sat-char}).

\begin{definition}
  Let $\bar{f}, \bar{g} \in \Lambda_{x,y}$ be bisymmetric functions.
  Their \emph{connected box product} $\bar{f} \boxtimes_c \bar{g}$ is the bisymmetric function:
  \begin{equation}
    \bar{f} \boxtimes_c \bar{g} \coloneqq L \bcirc (\bar{f} \boxtimes \bar{g}).
  \end{equation}
\end{definition}

The following theorem is the analogue of Theorem~\ref{thm:box-product-character} for the connected box product and follows immediately from Proposition~\ref{prop:conn-box-prod-sat}, Theorem~\ref{thm:box-product-character}, the definition of the connected box product, and associativity of $\bcirc$ (Proposition~\ref{prop:rel-plethysm-assoc}).
\begin{theorem}\label{thm:conn-box-product-character}
  Let $\sM$, $\sN$ be finite-type symmetric bimodules.
  The character of their connected box product satisfies:
  \begin{equation}
    \ch(\sM \boxtimes_c \sN) = \ch(\sM) \boxtimes_c \ch(\sN).
  \end{equation}
\end{theorem}

\section{Applications}\label{sec:applications}

\subsection{Stable twisted cohomology of automorphism groups of free groups} For $n\ge 1$, let $F_n$ denote the free group on $n$ generators and let $H(n):=H_1(F_n,\bQ)$. For any $p,q\ge 0$, we consider the
$\Aut(F_n)$-representation
\[ B_n(q,p):=\Hom_{\bQ}(H(n)^{\otimes q},H(n)^{\otimes p}). \]
These representations assemble into a prop, inducing a prop-structure on the collection of cohomology groups
\begin{equation}
  \cH_n(q,p):=H^*(\Aut(F_n),B_{q,p}(n)).
\end{equation}
\begin{remark}
  As shown in~\cite{KawazumiVespa2023}, these cohomology groups actually have more structure: they form a so-called \emph{wheeled} prop,
  where the wheeled structure is induced by the duality pairing map $B_n(1,1)\to\bQ$.
\end{remark}

These cohomology groups have been studied by the second author~\cite{Lindell2022-AutFn} and several others (see for example \cite{DjamentVespa15,Djament19,RW18}). However, the only part of the cohomology which is well-understood is the \emph{stable} part, as we now recall.
There is a group homomorphism
\[ s_n:\Aut(F_n)\to\Aut(F_{n+1}) \]
given by extending
automorphisms to act trivially on the new generator. Furthermore, the standard inclusion $F_n\hookrightarrow F_{n+1}$ and projection $F_{n+1}\cong F_n*\bZ\to F_n$ induce an $\Aut(F_n)$-equivariant map
\[ \sigma_n:B_{n+1}(q,p)\to B_{n}(q,p), \]
where the source is considered an $\Aut(F_n)$-representation via the map $s_n$.
We thus get an induced map $(s_n,\sigma_n)^*$ in cohomology, and it follows from \cite{RW-Wahl17} that for $n$ sufficiently large in comparison to the degree, $p$ and $q$, this
map is an isomorphism. For a given $n$, the stable part of the cohomology is thus the cohomology in those degrees which lie in the stable range. Furthermore, we define the \emph{stable} cohomology of $\Aut(F_n)$ with the coefficients $B_n(p,q)$ as the limit
\begin{equation}
  H^*(\Aut(F_\infty),B_\infty(q,p)) \coloneqq \lim(\cdots\to \cH_{n+1}(q,p) \xrightarrow{(s_n,\sigma_n)^*} \cH_n(q,p)\to\cdots \to \cH_1(q,p)).
\end{equation}
Let us write $\cH = \cH_\infty$ for the (wheeled) prop formed by the stable cohomology groups. It turns out that it has a remarkably simple description,
obtained by combining the main result of \cite{Lindell2022-AutFn} with \cite[Theorem 4]{KawazumiVespa2023}:

\begin{theorem}
  The wheeled prop $\cH$ is isomorphic to the wheeled prop associated to $\Com[1]$, the shift of the commutative operad.
\end{theorem}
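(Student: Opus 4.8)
The plan is to deduce the statement by combining two existing computations. On the one hand, \cite{Lindell2022-AutFn} determines the stable cohomology groups $\cH(q,p)=H^*(\Aut(F_\infty);B_\infty(q,p))$ completely, as bigraded $(\SymG_p\times\SymG_q)$-representations; this pins down the underlying graded symmetric bimodule of the wheeled PROP $\cH$. On the other hand, \cite[Theorem~4]{KawazumiVespa2023} identifies the wheeled-PROP structure, exhibiting $\cH$ as generated by low-arity classes --- most importantly the degree-one product $\mu\in\cH(2,1)=H^1(\Aut(F_\infty);\Hom(H^{\otimes2},H))$, together with the wheeled contractions coming from the evaluation pairing $B_n(1,1)\to\bQ$ --- subject only to graded-commutativity and associativity. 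The theorem then amounts to recognizing this data as a presentation of the wheeled PROP generated by the shifted commutative operad $\Com[1]$.

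Concretely, I would first make precise the functor $\cP$ sending an operad to the wheeled PROP it generates and record the presentation of $\cP(\Com[1])$: one binary generator in the appropriate cohomological degree, modulo graded-commutativity and associativity, with freely adjoined wheeled contractions. Next I would define the comparison morphism of wheeled PROPs $\cP(\Com[1])\to\cH$ by sending the binary generator to $\mu$; well-definedness is exactly the relations recorded in \cite[Theorem~4]{KawazumiVespa2023} (equivalently, a direct check that $\mu$ is graded-commutative and associative in $\cH$ and interacts correctly with the contractions). Finally, to upgrade this to an isomorphism I would compare the two sides arity by arity: the target is fixed by \cite{Lindell2022-AutFn}, while the source has an explicit combinatorial spanning set (contracted corollas built from $\mu$), so that checking bijectivity in each bidegree and cohomological degree is a finite dimension count in each bi-arity.

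The main obstacle is bookkeeping rather than conceptual. One must make the degree and sign conventions hidden in the operadic suspension ``$[1]$'' match the cohomological grading of $\cH$ --- in particular the Koszul signs produced when the symmetric-group actions meet odd-degree classes --- and simultaneously check that the wheeled structure on $\cH$ induced by the duality pairing corresponds to the canonical wheeled structure on $\cP(\Com[1])$ rather than to a sign-twisted variant. A related subtlety is that ``generated'' in \cite{KawazumiVespa2023} must be read as ``presented with no further relations,'' which is precisely why the full computation of \cite{Lindell2022-AutFn} enters as an exact count and not merely as a consistency check.
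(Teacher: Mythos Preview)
Your proposal is correct and matches the paper's own treatment: the theorem is not proved in the paper but is stated as the combination of the main result of \cite{Lindell2022-AutFn} (which computes the underlying symmetric bimodule) with \cite[Theorem~4]{KawazumiVespa2023} (which identifies the wheeled-PROP structure as generated by the single degree-one binary class). One minor notational mismatch: in the paper's conventions the generating class lies in $\cH(1,2)$ rather than $\cH(2,1)$, since $B_n(q,p)=\Hom(H(n)^{\otimes q},H(n)^{\otimes p})$ and the class is denoted $h_1$ there.
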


\begin{remark}
  The binary operation generating $\Com[1]$ corresponds to a class typically denoted $h_1\in \cH(1,2)$, which was introduced by \textcite{Kawazumi2005-MagnusExpansions}.
\end{remark}

A natural question to ask is how, for any $p,q\ge 0$, $\cH(q,p)$ decomposes into irreducible representations of $\SymG_q\times\SymG_p$.

\begin{theorem}[{\cite[Theorem A]{Lindell2022-AutFn}}]
  For $p,q \geq 0$, let $\sP(q,p)$ be the set of partitions of $\{1,\ldots,p\}$ with at least $q$ parts, $q$ of which are labeled $1,\ldots,q$ and the remaining parts unlabeled, with the natural action of $\SymG_p \times \SymG_q$.
  Moreover, let $\cP(q,p) = \bQ\{\sP(q,p)\}[p-q]$ be the symmetric bimodule defined by this set of partitions
  Then $\cH \cong \omega(\cP)$.
\end{theorem}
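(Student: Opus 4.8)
The statement is, as indicated, a restatement of \cite[Theorem~A]{Lindell2022-AutFn}, so one option is simply to cite it; what follows is a sketch of how it can be recovered within the framework of this paper, which also sets up the subsequent character computation.

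First I would use the structural result recalled just above: $\cH$ is isomorphic, as a wheeled PROP, to the one freely generated by the operad $\Com[1]$, and we only need the induced isomorphism of underlying symmetric bimodules. The plan is to produce a basis of that symmetric bimodule indexed by $\sP(q,p)$. Concretely, every element of the free wheeled PROP on $\Com[1]$ in the bi-arity with $q$ inputs and $p$ outputs can be brought to a normal form using coassociativity and cocommutativity of the generating binary operation together with the wheel relations: the $p$ outputs are partitioned into blocks; $q$ of the blocks are labeled bijectively by the $q$ inputs, a labeled block of size $k$ being an iterated (co)binary operation $V \to V^{\otimes k}$, canonical up to the sign coming from the $\sgn_k$-factor of $\Com[1](k)$; the remaining blocks are unlabeled, a block of size $k$ being the same iterated operation precomposed with the degree-$1$ ``covacuum'' $\mu \in \cH(0,1)$ obtained by contracting one output of the (co)binary operation into its input. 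This is exactly the combinatorial data of $\sP(q,p)$, the $\SymG_p\times\SymG_q$-action matches the evident one, and a count of how many degree-$1$ generators enter shows that a basis element attached to a partition with labeled blocks of sizes $a_i$ and unlabeled blocks of sizes $b_j$ sits in cohomological degree $\sum_i(a_i-1)+\sum_j b_j = p-q$, matching the shift $[p-q]$ in $\cP(q,p)$.

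It then remains to account for the signs. The only difference between the basis above and $\sP(q,p)$ with trivial signs is the collection of $\pm 1$'s coming from the $\sgn_k$-factors in $\Com[1](k) = \triv_k\otimes\sgn_k[\,\cdot\,]$; by the same mechanism as in Example~\ref{ex:char-of-triv-and-sign}, twisting arity-wise by these sign representations is precisely the involution $\omega$, so the symmetric bimodule underlying the free wheeled PROP on $\Com[1]$ is $\omega(\cP)$, giving $\cH\cong\omega(\cP)$. Equivalently --- and this is the form most useful for the subsequent character computation --- the same normal form exhibits $\cH$ as $\Sat(\sM_0)$ for the ``very simple'' symmetric bimodule $\sM_0$ with $\sM_0(k,1)$ and $\sM_0(k,0)$ one-dimensional (the $k$-output iterated (co)binary operation, respectively its $\mu$-capped version) placed in the appropriate degree and with the $\sgn_k$-action. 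Corollary~\ref{cor:bichar-of-sat} then gives $\chtwo(\cH)=\Sat(\chtwo(\sM_0))=\mathbf{exp}(\chtwo(\sM_0))-1$ with $\chtwo(\sM_0)=\sum_{k\ge 1}\bigl(e_k(x)\,p_1(y)+e_k(x)\bigr)$ up to $\hbar$-shifts, which one evaluates in closed form on the power-sum generators via the characterization of $\bcirc$; comparing with $(\omega\otimes\omega)$ applied to the orbit-sum of $\sP(q,p)$ over $\SymG_p\times\SymG_q$ confirms the identification, since $\chtwo$ is an isomorphism of rings and both bimodules are genuine and of finite type over $\bQ$.

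The step I expect to be the main obstacle is the normal-form and spanning argument: showing that, after imposing coassociativity, cocommutativity and the wheel relations, the labeled set partitions in $\sP(q,p)$ really index a \emph{basis} --- that there are no further relations and that the iterated images of the covacuum $\mu$ account for exactly the unlabeled blocks and nothing else. In the character-theoretic variant the corresponding difficulty is pinning down the correct $\sM_0$ and then matching the closed form of $\mathbf{exp}(\chtwo(\sM_0))$ with the generating function obtained by decomposing the $\SymG_p\times\SymG_q$-sets $\sP(q,p)$ into orbits, keeping track of stabilizers and of the degree shift $[p-q]$; both are bookkeeping-heavy but not conceptually deep, and in any case the independent argument of \cite{Lindell2022-AutFn} can be invoked.
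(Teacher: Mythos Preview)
The paper does not prove this statement; it simply cites \cite[Theorem~A]{Lindell2022-AutFn}, exactly as you note in your first sentence. So your proposal already matches the paper's ``proof'', and the sketch you add goes beyond what the paper does.

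That said, your sketch has a systematic input/output swap relative to the paper's conventions. In this paper, $\sM(m,n)$ has $m$ outputs and $n$ inputs, so $\cH(q,p)$ has $q$ outputs and $p$ inputs; the generator $h_1\in\cH(1,2)$ corresponds to the binary operation of $\Com[1]$ with two inputs and one output, not a comultiplication $V\to V^{\otimes 2}$. A labeled block of size $k$ in $\sP(q,p)$ thus corresponds to an element of $\Com[1](k)$ with $k$ inputs and one output, and an unlabeled block to its trace under the wheel structure (an operation with $k$ inputs and no output). Accordingly, your ``simple'' bimodule should have the nonzero pieces in biarities $(1,k)$ and $(0,k)$ --- this is precisely the paper's $\cQ$, with $\chtwo(\cQ)=\sum_{p\ge 1}(h_p(y)+h_p(y)h_1(x))$ --- rather than $(k,1)$ and $(k,0)$, and your character formula $\sum_{k\ge 1}(e_k(x)p_1(y)+e_k(x))$ has the variables reversed. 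Once this is corrected your sketch aligns with the saturation computation that follows in the paper and is a reasonable outline of why the cited result holds; the normal-form argument you flag as the main obstacle is indeed the substantive content of \cite{Lindell2022-AutFn}.
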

Consider the ungraded symmetric bimodule $\cQ$, defined by:
\begin{align}
  \cQ(q,p)
   & = \begin{cases}
         \triv_p & \text{ if }q \in \{0,1\},\ p\ge 1, \\
         0       & \text{ otherwise.}
       \end{cases}
   & \chtwo(\cQ)=\sum_{p\ge 1}(h_p(y)+ h_p(y)h_1(x)).
\end{align}
Then $\cP(q,p) = \bigl(\Sat(\cQ)\bigr)(q,p)[p-q]$.
If we define a ``regrading'' morphism $\Psi$ from $\hat{\Lambda}_{x,y}$ to its ring of Laurent series by:
\begin{equation}\label{eq:map-Psi}
  \begin{aligned}
    \Psi : \hat{\Lambda}_{x,y} & \to \hat{\Lambda}_{x,y}((\hbar)), \\
    h_q(x)                     & \mapsto (-\hbar)^{-q} h_q(x),     \\
    h_p(y)                     & \mapsto (-\hbar)^{p} h_p(y) .
  \end{aligned}
\end{equation}
In other words,
\begin{equation}
  \Psi(f(x_1,\dots; y_1,\dots)) = f(-\hbar^{-1} x_1,\dots; -\hbar y_1,\dots) = f \bcirc (-\hbar^{-1} h_1(x), -\hbar h_1(y)).
\end{equation}
Then we have by Corollary \ref{cor:bichar-of-sat} that:
\begin{equation}
  \chtwo(\sH) = \omega\Psi(\chtwo(\Sat(\cQ))) = \omega\Psi\Biggl(\sum_{n\ge 1} h_n(x) \bcirc \Bigl( \sum_{p\ge 1} \bigl(h_p(y) + h_p(y)h_1(x) \bigr) \Bigr) \Biggr).
\end{equation}

The sub-prop of $\cH$ generated by the binary operation $h_1\in\cH(1,2)$ has been studied by \textcite{EHLVZ2024}.
Let us denote it by $\tilde{\cH}$. If we let $\tilde{\cP}$ denote the symmetric sub-bimodule of $\sP$ generated by partitions with no unlabeled parts, and $\cP = \{ \sP(q,p)[p-q] \}$ its graded version, then we have $\tilde{\cH} = \omega(\tilde{\cP})$.
Let us also define $\tilde{\cQ}$ by:
\begin{align}
  \tilde{\cQ}(q,p) & = \begin{cases}
                         \triv_p & \text{ if }q=1,\ p\ge 1, \\
                         0       & \text{ otherwise}.
                       \end{cases}
                   & \chtwo(\tilde{\cQ})                   & = \sum_{p\ge 1} h_p(y)h_1(x).
\end{align}
We have again that $\tilde{\cP}$ is a regraded version of $\Sat(\tilde{\cQ})$, so by Corollary~\ref{cor:bichar-of-sat}, we have
\begin{equation}
  \chtwo(\tilde{\cH}) = \omega\Psi(\ch(\Sat(\tilde{\cQ})))=\omega\Psi\Biggl(\sum_{n\ge 1}h_n(x)\bcirc\Bigl(\sum_{p\ge 1} h_p(y)h_1(x)\Bigr)\Biggr).
\end{equation}
Note that this sum is infinite even in a fixed degree; one has to consider a fixed pair (arity, degree) to get a finite sum.
The result of this computation in low degree and low arity is included in Section~\ref{sec:appendixA}.

\subsection{Stable algebraic cohomology of the \texorpdfstring{$\IA$}{IA}-automorphism group}
The $\IA$-automorphism group of $F_n$, which we
denote by $\IA_n$, is the kernel of the action of $\Aut(F_n)$ on the abelianization $H_1(F_n,\bZ)$. The stable rational cohomology of this group has been studied by the second author~\cite{Lindell2024IA}
and in a similar way by \textcite{HabiroKatada2023}. The homomorphism $s_n:\Aut(F_n)\to\Aut(F_{n+1})$ restricts to a homomorphism
\[ s_n:\IA_n\to\IA_{n+1}, \]
which we denote by the same symbol, for simplicity. As above, we may define
$H^*(\IA_\infty,\bQ)$. However, it should be noted that the cohomology does not stabilize in the same sense as above, i.e.\ $s_n^*$ is \emph{not} an isomorphism in any range $n\gg *$.
Instead, the short exact sequence
\[ 1\to\IA_n\to\Aut(F_n)\to\GL_n(\bZ)\to 1, \]
induces an action by $\GL_n(\bZ)$ on the cohomology $H^*(\IA_n,\bQ)$ and is conjectured that with respect to this action, the cohomology stabilizes
in the sense of representation stability~\cite[Section 6.2]{ChurchFarb2013}. The first piece of evidence for this is that
\[ H^1(\IA_n,\bQ)\cong \Hom_\bQ(\Lambda^2 H(n),H(n)), \]
as representations of $\GL_n(\bZ)$, which was proven by \textcite{Kawazumi2005-MagnusExpansions}. In particular, $H^1(\IA_n,\bQ)$ is isomorphic to a
representation whose $\GL_n(\bZ)$-action is the restriction of a $\GL_n(\bQ)$-action. In other words, it is an \emph{algebraic} $\GL_n(\bZ)$-representation.
As part of the conjecture of representation stability, it is conjectured that for $n\gg *$, $H^*(\IA_n,\bQ)$ is an algebraic representation.

A second step in trying to understand the stable cohomology groups is to study
\[ H^*_A(\IA_n,\bQ):=\mathrm{Im}(\Lambda^* H^1(\IA_n,\bQ) \xrightarrow{\smile} H^*(\IA_n,\bQ)), \]
which is called the \emph{Albanese cohomology} of $\IA_n$. By the result of Kawazumi, this is a quotient of the exterior algebra on $\Hom_\bQ(\Lambda^2 H(n),H(n))$
and in particular an algebraic representation. Katada~\cite{Lindell2024IA} proved that for $n\gg *$, we have
\[ H^*_A(\IA_n,\bQ)\cong\Lambda^*\left(\Hom_\bQ(\Lambda^2 H(n),H(n))\right)/I, \]
where $I$ is an explicit quadratic ideal (as the specific description will not be relevant here, we refer to \cite{Lindell2024IA} for details). Below, we
apply the results of this paper to decompose this graded $\GL_n(\bQ)$-representation into irreducibles.
Such a decomposition can be obtained in degree 1 using the result of \textcite{Kawazumi2005-MagnusExpansions} above, and the decompositions where computed by \textcite{Pettet05} in degree 2 and by \textcite{Katada2022} in degree 3.

A next step in trying to understand $H^*(\IA_n,\bQ)$ is to study its \emph{algebraic} part. For any $\GL_n(\bZ)$-representation $V$, we define its algebraic part as
\[ V^{\mathrm{alg}} \coloneqq \Bigl( \colim_{\substack{W\subseteq V\\ W\text{ algebraic}}} W \Bigr) \subseteq V. \]
We may also define $H^*(\IA_\infty,\bQ)^{\mathrm{alg}}$ in a similar way as above.
If $H^*(\IA_n,\bQ)$ is finite dimensional, in a range $n\gg *$, then the we have~\cite{Lindell2024IA}:
\begin{equation}\label{eq:alg-cohomology-of-IA}H^*(\IA_\infty,\bQ)^{\mathrm{alg}}\cong \bQ[y_4,y_8,\ldots]\otimes H_A^*(\IA_\infty,\bQ),\end{equation}
where $y_i$ is a class of degree $i$ which is invariant under the action of $\GL_\infty(\bZ):=\mathrm{colim}_{n}\GL_n(\bZ)$ (see also \cite{HabiroKatada2023} for a similar, but slightly weaker, result). In particular, this tells us that if the cohomology is stably finite dimensional and we want to decompose $H^*(\IA_\infty)^{\mathrm{alg}}$ into irreducible representations,
we only need to decompose $H^*_A(\IA_\infty,\bQ)$. This is thus the goal of this section.

To see how the results of this paper are related, let us recall that the irreducilbe representations of $\GL_n(\bQ)$ are indexed by bipartitions, i.e.\ pairs $(\lambda,\mu)$, where
$\lambda:=(\lambda_1\ge\lambda_2\ge\cdots\ge\lambda_k\ge 0)$ and $\mu=(\mu_1\ge\mu_2\ge\cdots\ge\mu_l\ge 0)$ are partitions. More specifically, if we let $H(n):=\bQ^n$ denote the standard representation of $\GL_n(\bQ)$ and $H^\vee(n)$ its dual and $\mu$ and $\lambda$ are partitions
of $p$ and $q$ respectively, we define
\[ V_{\lambda,\mu}(n):=(S^{\lambda}\otimes S^{\mu})\otimes_{\Sigma_p\times\Sigma_q}(H(n)^{\otimes p}\otimes H^\vee(n)^{\otimes q}). \]
We define a map
\[ V_{\lambda,\mu}(n+1)\to V_{\lambda,\mu}(n), \]
which is induced by the standard projection $H(n+1)=\bQ^{n+1}\to\bQ^n=H(n)$ and the dual of the standard inclusion $H(n)\hookrightarrow H(n+1)$.
This map is $\GL_n(\bQ)$-equivariant, where the source is considered a $\GL_n(\bQ)$-representation via the standard inclusion $\GL_n(\bQ)\subseteq\GL_{n+1}(\bQ)$.
We define
\[ V_{\lambda,\mu}:=\lim (\cdots \to V_{\lambda,\mu}(n+1)\to V_{\lambda,\mu}(n)\to\cdots \to V_{\lambda,\mu}(0)). \]

Let us call ``non-unital prop'' objects defined like props except we drop the requirement that they have units in arity $(1,1)$.
\begin{proposition}
  For each $p,q\ge 0$ and each bipartition $(\lambda,\mu)$ with $|\lambda|=p$ and $|\mu|=q$, the multiplicity of $V_{\lambda,\mu}$ in the decomposition of $H^*_A(\IA_\infty,\bQ)$ into irreducibles of $\GL_\infty(\bQ)$ agrees with the multiplicity
  of $S^\lambda\otimes S^\mu$ in the decomposition of $\cH'(q,p)$, where $\cH'$ is the maximal non-unital sub-prop of $\cH$.
\end{proposition}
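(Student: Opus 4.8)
The plan is to deduce the statement from two inputs: a Schur--Weyl type dictionary between algebraic representations of $\GL_\infty(\bQ)$ and symmetric bimodules, and the comparison theorem of \cite{Lindell2024IA} (in the spirit of \cite[Section~6]{KupersRandal-Williams2020}, see also \cite{HabiroKatada2023}) which expresses the decomposition of $H^*_A(\IA_\infty,\bQ)$ in terms of a symmetric sub-bimodule of the stable bivariant twisted cohomology $\cH$ of $\Aut(F_\infty)$. Granting these, the only thing left to prove is that the relevant sub-bimodule is exactly the maximal non-unital sub-PROP $\cH'$.

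First I would set up the dictionary. Writing $H=H(\infty)$ for the standard representation of $\GL_\infty(\bQ)$, every algebraic representation $A$ that is a union of finite-length algebraic representations decomposes as $A\cong\bigoplus_{\lambda,\mu}V_{\lambda,\mu}^{\oplus m_{\lambda,\mu}(A)}$, and one attaches to it the symmetric bimodule $\Phi(A)$ with
\[
  \Phi(A)(q,p)\coloneqq\bigoplus_{\lambda\,\dashv\,p,\ \mu\,\dashv\,q}(S^\lambda\otimes S^\mu)\otimes\Hom_{\GL_\infty(\bQ)}(V_{\lambda,\mu},A),
\]
so that the multiplicity of $S^\lambda\otimes S^\mu$ in $\Phi(A)(q,p)$ is $m_{\lambda,\mu}(A)$ by construction. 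The subtle point, which is responsible for the word ``non-unital'' in the statement, is that $\Phi(A)(q,p)$ is \emph{not} $\Hom_{\GL_\infty(\bQ)}(H^{\otimes p}\otimes(H^\vee)^{\otimes q},A)$: the latter also detects $V_{\lambda,\mu}$ with $|\lambda|<p$ through the contraction (evaluation) maps $H\otimes H^\vee\to\bQ$. Thus $\Phi(A)$ is the ``contraction-reduced'' part of the naive bimodule attached to $A$; this is precisely the phenomenon that the saturation formalism of Section~\ref{sec:propic-char} is designed to handle, and in the opposite direction a symmetric bimodule $\sM$ produces the algebraic representation $\bigoplus_{q,p}\sM(q,p)\otimes_{\SymG_q\times\SymG_p}(H^{\otimes p}\otimes(H^\vee)^{\otimes q})$, which only recovers $\Sat(\sM)$ upon passing back through $\Phi$.

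Next I would invoke \cite{Lindell2024IA}: running the Hochschild--Serre spectral sequence of $1\to\IA_n\to\Aut(F_n)\to\GL_n(\bZ)\to1$ against the computation of \cite{Lindell2022-AutFn} and Galatius's vanishing theorem for the stable cohomology of $\Aut(F_n)$, one obtains $\Phi\bigl(H^*_A(\IA_\infty,\bQ)\bigr)\cong\sN$ for an explicit symmetric sub-bimodule $\sN\subseteq\cH$, compatibly with the bipartition/bi-arity bookkeeping above; concretely $\sN$ is $\cH$ with the contributions of the $\GL$-invariant (Brauer-algebra) summands of the coefficient modules $\Hom(H^{\otimes q},H^{\otimes p})$ deleted. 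Under the dictionary these are exactly the copies of the regular representation $\bigoplus_{\lambda\,\dashv\,n}S^\lambda\otimes S^\lambda$ sitting in bi-arities $(n,n)$ in cohomological degree zero, i.e.\ the image of the PROP unit $\Sat(\sI)$ (Example~\ref{ex:regular-representation}); this matches on the cohomology side the fact that $H^*_A(\IA_\infty,\bQ)$, being generated by the degree-one class $H^1$ under cup product, contains the trivial representation $V_{\emptyset,\emptyset}$ only in degree $0$. It then remains to identify $\sN=\cH\ominus\operatorname{im}(\Sat(\sI))$ with $\cH'$. Using the description $\cH\cong\omega(\cP)$ with $\cP(q,p)=\Sat(\cQ)(q,p)[p-q]$ recalled above (from \cite{Lindell2022-AutFn}, \cite{KawazumiVespa2023} and \cite{EHLVZ2024}), the unit map embeds $\Sat(\sI)$ as the sub-bimodule concentrated in the diagonal bi-arities in cohomological degree zero, so $\cH'\coloneqq\cH\ominus\operatorname{im}(\Sat(\sI))$ is a saturated sub-bimodule; a direct check from Definition~\ref{def:box-prod} shows $\cH'$ is closed under $\boxtimes$ (a two-level graph with matching output and input profiles and total degree zero has both levels of that shape), hence is a non-unital sub-PROP. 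Conversely any non-unital sub-PROP of $\cH$ must meet the one-dimensional span of the unit in bi-arity $(1,1)$ trivially, and by closure under $\boxtimes$ this forces it to be disjoint from all of $\operatorname{im}(\Sat(\sI))$, hence contained in $\cH'$. Thus $\cH'$ is the maximal non-unital sub-PROP, $\sN=\cH'$, and the multiplicity comparison follows.

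The main obstacle is twofold. First, making the Schur--Weyl dictionary and the comparison of \cite{Lindell2024IA} precise enough to pin $\sN$ down on the nose: this is where the contraction corrections enter and where one must be attentive to the algebraicity and finiteness hypotheses under which the results of \cite{Lindell2024IA} are stated. Second, the bookkeeping identification $\sN=\cH'$, whose content is exactly that the corrections forced by the contraction maps in the $\GL_\infty(\bQ)$-picture amount precisely to deleting the unit $\Sat(\sI)$ from the PROP $\cH$; matching the two combinatorial descriptions (labelled set partitions on one side, two-level directed graphs modulo the unit on the other) is routine once this principle is isolated, but keeping track of conventions is delicate.
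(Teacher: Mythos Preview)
The paper does not actually prove this proposition; it is stated as a consequence of \cite{Lindell2024IA}, and the paper immediately follows it with the explicit description $\cH'\cong\omega(\cP')$, where $\cP'\subset\cP$ is spanned by those labelled set partitions having \emph{no labelled part of size one}. Your overall strategy---Schur--Weyl for $\GL_\infty(\bQ)$ combined with the comparison of \cite{Lindell2024IA}---is the intended one, but your identification of $\cH'$ is wrong, and the error is not just bookkeeping.

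You set $\cH'=\cH\ominus\operatorname{im}(\Sat(\sI))$, i.e.\ you only delete the degree-zero diagonal part. The paper's $\cH'=\omega(\cP')$ instead deletes every basis partition containing \emph{any} labelled singleton (any ``identity strand''), which is strictly more. Already in bi-arity $(1,2)$ the two disagree: $\Sat(\sI)(1,2)=0$, so your candidate equals $\cH(1,2)$, which is $3$-dimensional; but $\cP'(1,2)$ contains only the partition $\{\{1,2\}\}$ with its unique block labelled, so the paper's $\cH'(1,2)$ is $1$-dimensional. Comparing with the known $H^1_A(\IA_\infty,\bQ)\cong V_{1,1^2}\oplus V_{\emptyset,1}$ shows that the multiplicity in bi-arity $(1,2)$ must be $1$, confirming the paper's description and ruling out yours.

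The source of the error is your handling of the contraction correction. Passing from the mixed tensor $H^{\otimes p}\otimes(H^\vee)^{\otimes q}$ to the genuine irreducible $V_{\lambda,\mu}$ does not merely kill the fully contracted ($\GL$-invariant) summand: it kills every summand reachable by \emph{at least one} contraction $H\otimes H^\vee\to\bQ$. In the PROP picture a single contraction is a single identity strand, i.e.\ a single labelled singleton; so the bimodule recording multiplicities of irreducibles is obtained by deleting all labelled singletons, not just $\Sat(\sI)$. Relatedly, your maximality argument fails on two counts: $\cH\ominus\Sat(\sI)$ is not saturated (e.g.\ $\mathrm{id}_1\otimes\mu_2\in\cH(2,3)$ lies in it but its only horizontal factorisation in $\cH$ uses $\mathrm{id}_1\notin\cH\ominus\Sat(\sI)$), and ``disjoint from the unit in $(1,1)$'' does not force ``disjoint from $\Sat(\sI)$'' under $\boxtimes$. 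You should read the phrase ``maximal non-unital sub-PROP'' in the statement through the paper's explicit description $\omega(\cP')$ that follows it.
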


If we let $\cP'\subset\cP$ be the symmetric sub-bimodule of $\cP$, generated by partitions with no labeled parts of size one, we have $\cH'\cong\omega(\cP')$.
Defining a symmetric bimodule $\cQ'$ by
\begin{align*}
  \cQ'(q,p) & = \begin{cases}
                  \triv_p & \text{ if } q=0,\ p\ge 1, \\
                  \triv_p & \text{ if } q=1,\ p\ge 2, \\
                  0       & \text{ otherwise},
                \end{cases}
            & \chtwo(\cQ') = \sum_{p\ge 1} h_p(y) + \sum_{p\ge 2} h_p(y)h_1(x).
\end{align*}
then $\cP'$ is a regraded version of $\Sat(\cQ')$, so again by Corollary~\ref{cor:bichar-of-sat} (where $\omega$ is the involution from \eqref{eq:omega} and $\Psi$ is the map from \eqref{eq:map-Psi}), we have:
\begin{equation}
  \chtwo(\cH')=\omega\Psi\bigl(\chtwo(\Sat(\cQ'))\bigr) = \omega\Psi\Biggl(\sum_{n \geq 1} h_n(x) \bcirc \Bigl( \sum_{p\ge 1} h_p(y) + \sum_{p\ge 2} h_p(y)h_1(x) \Bigr) \Biggr).
\end{equation}
The result of this computation in low degree is included in Appendix~\ref{sec:appendixA}.

\appendix
\section{Code and computations}\label{sec:appendixA}

We have implemented most of the operations described in this paper with Mathematica (using characters tables for the symmetric groups produced by GAP) and published the code~\cite{ILCode}.

\paragraph{Stable twisted cohomology of $\Aut(F_n)$}

The characters of the symmetric bimodules $\cH^d = \{ H^d(\Aut(F_\infty), B_{q,p}(\infty)) \}_{p,q \in \bN}$ decomposes as follows in terms of irreducible representations of symmetric groups.
Since $\cH^d$ is infinite dimensional for all $d \geq 0$, we must truncate their characters; we arbitrarily decided to truncate up to arity $p \leq 4$.

\begin{equation}
  \cH^0 =
  \parbox[t]{.75\textwidth}{\raggedright$
      S_{4,4}\oplus S_{31,31}\oplus S_{3,3}\oplus S_{21^2,21^2}\oplus S_{21,21}\oplus S_{2^2,2^2}\oplus S_{2,2}\oplus S_{1^4,1^4}\oplus S_{1^3,1^3}\oplus S_{1^2,1^2}\oplus S_{1,1}\oplus S_{\emptyset ,\emptyset }
      \oplus \dots
    $}
\end{equation}
Note that $\cH^0 = \bigoplus_{\lambda} S_\lambda \otimes S_\lambda$ has character $\prod_{i,j} (1-x_i y_j)^{-1}$.
It would be interesting to find such a ``compact'' expression for the characters of the higher degree terms.
\begingroup
\allowdisplaybreaks
\begin{align}
  \cH^1 & =
  \parbox[t]{.75\textwidth}{\raggedright$
      S_{3,4}\oplus S_{3,31}^{\oplus 2}\oplus S_{3,21^2}\oplus S_{21,31}^{\oplus 2}\oplus S_{21,21^2}^{\oplus 3}\oplus S_{21,2^2}^{\oplus 2}\oplus S_{21,1^4}\oplus S_{2,3}\oplus S_{2,21}^{\oplus 2}\oplus S_{2,1^3}\oplus S_{1^3,21^2}^{\oplus 2}\oplus S_{1^3,2^2}\oplus S_{1^3,1^4}^{\oplus 2}\oplus S_{1^2,21}^{\oplus 2}\oplus S_{1^2,1^3}^{\oplus 2}\oplus S_{1,2}\oplus S_{1,1^2}^{\oplus 2}\oplus S_{\emptyset ,1}
      \oplus \dots
  $} \\
  \cH^2 & =
  \parbox[t]{.75\textwidth}{\raggedright$
      S_{2,31}^{\oplus 3}\oplus S_{2,21^2}^{\oplus 6}\oplus S_{2,2^2}\oplus S_{2,1^4}^{\oplus 2}\oplus S_{1^2,31}\oplus S_{1^2,21^2}^{\oplus 5}\oplus S_{1^2,2^2}^{\oplus 4}\oplus S_{1^2,1^4}^{\oplus 5}\oplus S_{1,21}^{\oplus 3}\oplus S_{1,1^3}^{\oplus 4}\oplus S_{\emptyset ,1^2}^{\oplus 2}
      \oplus \dots
  $} \\
  \cH^3 & =
  \parbox[t]{.75\textwidth}{\raggedright$
      S_{1,31}\oplus S_{1,21^2}^{\oplus 7}\oplus S_{1,2^2}^{\oplus 3}\oplus S_{1,1^4}^{\oplus 7}\oplus S_{\emptyset ,21}\oplus S_{\emptyset ,1^3}^{\oplus 3}
      \oplus \dots
  $} \\
  \cH^4 & =
  \parbox[t]{.75\textwidth}{\raggedright$
      S_{\emptyset ,21^2}^{\oplus 2}\oplus S_{\emptyset ,2^2}^{\oplus 2}\oplus S_{\emptyset ,1^4}^{\oplus 5}
      \oplus \dots
    $}
\end{align}
\endgroup

\paragraph{Albanese cohomology of $\IA_n$}

We obtain the following decompositions for the Albanese cohomology $H^d_A(\IA_\infty,\bQ)$.
We stop at $d \leq 5$ for presentation reasons; Table~\ref{tab:irr-comp-hdaIA} gives the number of irreducible sub-representations of the Albanese cohomology for $d \leq 10$.
Note that our computations match with those of~\cite{Kawazumi2005-MagnusExpansions,Pettet05,Katada2022} for $d \leq 3$.

\begin{table}[htbp]
  \centering
  \begin{tabular}{r|cccccccccc}
    $d$        & 1 & 2 & 3  & 4   & 5   & 6            & 7             & 8              & 9              & 10 \\
    \#irr      & 2 & 6 & 21 & 69  & 219 & 663          & \text{1,915}  & \text{5,182}   & \text{13,330}  & \text{32,876} \\
    $\sum$mult & 2 & 8 & 34 & 152 & 720 & \text{3,634} & \text{19,266} & \text{107,018} & \text{619,606} & \text{3,727,224} \\
  \end{tabular}
  \caption{The number of irreducible sub-representations of $H^d_A(\IA_\infty, \bQ)$ and the sum of their multiplicities.}
  \label{tab:irr-comp-hdaIA}
\end{table}

\begingroup
\allowdisplaybreaks
\begin{align}
  H^1_A(\IA_\infty, \bQ) & =
  \parbox[t]{.75\textwidth}{\raggedright$
      V_{1,1^2}\oplus V_{\emptyset ,1}
  $} \\
  H^2_A(\IA_\infty, \bQ) & =
  \parbox[t]{.75\textwidth}{\raggedright$
      V_{2,21^2}\oplus V_{1^2,2^2}\oplus V_{1^2,1^4}\oplus V_{1,21}\oplus V_{1,1^3}^{\oplus 2}\oplus V_{\emptyset ,1^2}^{\oplus 2}
  $} \\
  H^3_A(\IA_\infty, \bQ) & =
  \parbox[t]{.75\textwidth}{\raggedright$
      V_{3,31^3}\oplus V_{3,2^3}\oplus V_{21,321}\oplus V_{21,2^21^2}\oplus V_{21,21^4}\oplus V_{2,31^2}\oplus V_{2,2^21}^{\oplus 2}\oplus V_{2,21^3}^{\oplus 2}\oplus V_{2,1^5}\oplus V_{1^3,3^2}\oplus V_{1^3,2^21^2}\oplus V_{1^3,1^6}\oplus V_{1^2,32}\oplus V_{1^2,2^21}^{\oplus 2}\oplus V_{1^2,21^3}^{\oplus 2}\oplus V_{1^2,1^5}^{\oplus 2}\oplus V_{1,21^2}^{\oplus 3}\oplus V_{1,2^2}^{\oplus 2}\oplus V_{1,1^4}^{\oplus 4}\oplus V_{\emptyset ,21}\oplus V_{\emptyset ,1^3}^{\oplus 3}
  $} \\
  H^4_A(\IA_\infty, \bQ) & =
  \parbox[t]{.75\textwidth}{\raggedright$
      V_{4,41^4}\oplus V_{4,32^21}\oplus V_{31,421^2}\oplus V_{31,3^22}\oplus V_{31,32^21}\oplus V_{31,321^3}\oplus V_{31,31^5}\oplus V_{31,2^31^2}\oplus V_{3,41^3}\oplus V_{3,32^2}^{\oplus 2}\oplus V_{3,321^2}^{\oplus 2}\oplus V_{3,31^4}^{\oplus 2}\oplus V_{3,2^31}^{\oplus 2}\oplus V_{3,2^21^3}\oplus V_{3,21^5}\oplus V_{21^2,431}\oplus V_{21^2,3^21^2}\oplus V_{21^2,32^21}\oplus V_{21^2,321^3}\oplus V_{21^2,2^31^2}\oplus V_{21^2,2^21^4}\oplus V_{21^2,21^6}\oplus V_{21,421}\oplus V_{21,3^21}^{\oplus 2}\oplus V_{21,32^2}^{\oplus 2}\oplus V_{21,321^2}^{\oplus 4}\oplus V_{21,31^4}^{\oplus 2}\oplus V_{21,2^31}^{\oplus 3}\oplus V_{21,2^21^3}^{\oplus 5}\oplus V_{21,21^5}^{\oplus 3}\oplus V_{21,1^7}\oplus V_{2^2,42^2}\oplus V_{2^2,3^21^2}\oplus V_{2^2,321^3}\oplus V_{2^2,2^21^4}\oplus V_{2^2,2^4}\oplus V_{2,321}^{\oplus 3}\oplus V_{2,31^3}^{\oplus 3}\oplus V_{2,2^21^2}^{\oplus 5}\oplus V_{2,21^4}^{\oplus 6}\oplus V_{2,2^3}^{\oplus 4}\oplus V_{2,1^6}^{\oplus 2}\oplus V_{1^4,4^2}\oplus V_{1^4,3^21^2}\oplus V_{1^4,2^21^4}\oplus V_{1^4,2^4}\oplus V_{1^4,1^8}\oplus V_{1^3,43}\oplus V_{1^3,3^21}^{\oplus 2}\oplus V_{1^3,321^2}^{\oplus 2}\oplus V_{1^3,2^31}^{\oplus 2}\oplus V_{1^3,2^21^3}^{\oplus 3}\oplus V_{1^3,21^5}^{\oplus 2}\oplus V_{1^3,1^7}^{\oplus 2}\oplus V_{1^2,321}^{\oplus 3}\oplus V_{1^2,31^3}\oplus V_{1^2,3^2}^{\oplus 2}\oplus V_{1^2,2^21^2}^{\oplus 8}\oplus V_{1^2,21^4}^{\oplus 5}\oplus V_{1^2,2^3}\oplus V_{1^2,1^6}^{\oplus 5}\oplus V_{1,32}\oplus V_{1,31^2}\oplus V_{1,2^21}^{\oplus 6}\oplus V_{1,21^3}^{\oplus 7}\oplus V_{1,1^5}^{\oplus 7}\oplus V_{\emptyset ,21^2}^{\oplus 2}\oplus V_{\emptyset ,2^2}^{\oplus 2}\oplus V_{\emptyset ,1^4}^{\oplus 5}
  $} \\
  H^5_A(\IA_\infty, \bQ) & =
  \parbox[t]{.75\textwidth}{\raggedright$
    V_{5,51^5}\oplus V_{5,42^21^2}\oplus V_{5,3^22^2}\oplus V_{41,521^3}\oplus V_{41,4321}\oplus V_{41,42^3}\oplus V_{41,42^21^2}\oplus V_{41,421^4}\oplus V_{41,41^6}\oplus V_{41,3^31}\oplus V_{41,3^221^2}\oplus V_{41,32^31}\oplus V_{41,32^21^3}\oplus V_{4,51^4}\oplus V_{4,42^21}^{\oplus 2}\oplus V_{4,421^3}^{\oplus 2}\oplus V_{4,41^5}^{\oplus 2}\oplus V_{4,3^221}^{\oplus 2}\oplus V_{4,32^3}^{\oplus 2}\oplus V_{4,32^21^2}^{\oplus 3}\oplus V_{4,321^4}\oplus V_{4,31^6}\oplus V_{4,3^3}\oplus V_{4,2^31^3}\oplus V_{32,52^21}\oplus V_{32,43^2}\oplus V_{32,4321}\oplus V_{32,431^3}\oplus V_{32,42^21^2}\oplus V_{32,421^4}\oplus V_{32,3^22^2}\oplus V_{32,3^221^2}\oplus V_{32,3^21^4}\oplus V_{32,32^31}\oplus V_{32,32^21^3}\oplus V_{32,321^5}\oplus V_{32,2^41^2}\oplus V_{31^2,531^2}\oplus V_{31^2,4^22}\oplus V_{31^2,4321}\oplus V_{31^2,431^3}\oplus V_{31^2,42^3}\oplus V_{31^2,42^21^2}\oplus V_{31^2,421^4}\oplus V_{31^2,3^31}\oplus V_{31^2,3^221^2}^{\oplus 2}\oplus V_{31^2,32^31}\oplus V_{31^2,32^21^3}^{\oplus 2}\oplus V_{31^2,321^5}\oplus V_{31^2,31^7}\oplus V_{31^2,2^31^4}\oplus V_{31^2,2^5}\oplus V_{31,521^2}\oplus V_{31,432}^{\oplus 2}\oplus V_{31,431^2}^{\oplus 2}\oplus V_{31,42^21}^{\oplus 4}\oplus V_{31,421^3}^{\oplus 4}\oplus V_{31,41^5}^{\oplus 2}\oplus V_{31,3^221}^{\oplus 5}\oplus V_{31,3^21^3}^{\oplus 3}\oplus V_{31,32^3}^{\oplus 3}\oplus V_{31,32^21^2}^{\oplus 8}\oplus V_{31,321^4}^{\oplus 6}\oplus V_{31,31^6}^{\oplus 3}\oplus V_{31,3^3}^{\oplus 2}\oplus V_{31,2^41}^{\oplus 3}\oplus V_{31,2^31^3}^{\oplus 4}\oplus V_{31,2^21^5}^{\oplus 2}\oplus V_{31,21^7}\oplus V_{3,42^2}\oplus V_{3,421^2}^{\oplus 3}\oplus V_{3,41^4}^{\oplus 3}\oplus V_{3,3^22}^{\oplus 4}\oplus V_{3,3^21^2}\oplus V_{3,32^21}^{\oplus 9}\oplus V_{3,321^3}^{\oplus 7}\oplus V_{3,31^5}^{\oplus 6}\oplus V_{3,2^31^2}^{\oplus 7}\oplus V_{3,2^21^4}^{\oplus 4}\oplus V_{3,21^6}^{\oplus 3}\oplus V_{3,2^4}\oplus V_{2^21,532}\oplus V_{2^21,4^21^2}\oplus V_{2^21,4321}\oplus V_{2^21,431^3}\oplus V_{2^21,42^21^2}\oplus V_{2^21,3^22^2}\oplus V_{2^21,3^221^2}\oplus V_{2^21,3^21^4}^{\oplus 2}\oplus V_{2^21,32^31}\oplus V_{2^21,32^21^3}\oplus V_{2^21,321^5}\oplus V_{2^21,2^41^2}\oplus V_{2^21,2^31^4}\oplus V_{2^21,2^21^6}\oplus V_{21^3,541}\oplus V_{21^3,4^21^2}\oplus V_{21^3,4321}\oplus V_{21^3,431^3}\oplus V_{21^3,3^22^2}\oplus V_{21^3,3^221^2}\oplus V_{21^3,3^21^4}\oplus V_{21^3,32^31}\oplus V_{21^3,32^21^3}\oplus V_{21^3,321^5}\oplus V_{21^3,2^41^2}\oplus V_{21^3,2^31^4}\oplus V_{21^3,2^21^6}\oplus V_{21^3,21^8}\oplus V_{21^2,531}\oplus V_{21^2,4^21}^{\oplus 2}\oplus V_{21^2,432}^{\oplus 2}\oplus V_{21^2,431^2}^{\oplus 4}\oplus V_{21^2,42^21}^{\oplus 2}\oplus V_{21^2,421^3}^{\oplus 2}\oplus V_{21^2,3^221}^{\oplus 5}\oplus V_{21^2,3^21^3}^{\oplus 6}\oplus V_{21^2,32^3}^{\oplus 3}\oplus V_{21^2,32^21^2}^{\oplus 7}\oplus V_{21^2,321^4}^{\oplus 6}\oplus V_{21^2,31^6}^{\oplus 2}\oplus V_{21^2,2^41}^{\oplus 4}\oplus V_{21^2,2^31^3}^{\oplus 6}\oplus V_{21^2,2^21^5}^{\oplus 6}\oplus V_{21^2,21^7}^{\oplus 3}\oplus V_{21^2,1^9}\oplus V_{21,431}^{\oplus 3}\oplus V_{21,42^2}^{\oplus 3}\oplus V_{21,421^2}^{\oplus 4}\oplus V_{21,41^4}\oplus V_{21,3^22}^{\oplus 5}\oplus V_{21,3^21^2}^{\oplus 9}\oplus V_{21,32^21}^{\oplus 12}\oplus V_{21,321^3}^{\oplus 16}\oplus V_{21,31^5}^{\oplus 7}\oplus V_{21,2^31^2}^{\oplus 14}\oplus V_{21,2^21^4}^{\oplus 15}\oplus V_{21,21^6}^{\oplus 9}\oplus V_{21,2^4}^{\oplus 6}\oplus V_{21,1^8}^{\oplus 3}\oplus V_{2^2,52^2}\oplus V_{2^2,432}^{\oplus 2}\oplus V_{2^2,431^2}^{\oplus 2}\oplus V_{2^2,42^21}^{\oplus 2}\oplus V_{2^2,421^3}^{\oplus 2}\oplus V_{2^2,3^221}^{\oplus 3}\oplus V_{2^2,3^21^3}^{\oplus 4}\oplus V_{2^2,32^3}^{\oplus 2}\oplus V_{2^2,32^21^2}^{\oplus 4}\oplus V_{2^2,321^4}^{\oplus 5}\oplus V_{2^2,31^6}\oplus V_{2^2,2^41}^{\oplus 3}\oplus V_{2^2,2^31^3}^{\oplus 3}\oplus V_{2^2,2^21^5}^{\oplus 3}\oplus V_{2^2,21^7}\oplus V_{2,421}\oplus V_{2,41^3}\oplus V_{2,3^21}^{\oplus 3}\oplus V_{2,32^2}^{\oplus 7}\oplus V_{2,321^2}^{\oplus 10}\oplus V_{2,31^4}^{\oplus 8}\oplus V_{2,2^31}^{\oplus 11}\oplus V_{2,2^21^3}^{\oplus 15}\oplus V_{2,21^5}^{\oplus 12}\oplus V_{2,1^7}^{\oplus 5}\oplus V_{1^5,5^2}\oplus V_{1^5,4^21^2}\oplus V_{1^5,3^22^2}\oplus V_{1^5,3^21^4}\oplus V_{1^5,2^41^2}\oplus V_{1^5,2^21^6}\oplus V_{1^5,1^{10}}\oplus V_{1^4,54}\oplus V_{1^4,4^21}^{\oplus 2}\oplus V_{1^4,431^2}^{\oplus 2}\oplus V_{1^4,3^221}^{\oplus 2}\oplus V_{1^4,3^21^3}^{\oplus 3}\oplus V_{1^4,32^3}^{\oplus 2}\oplus V_{1^4,32^21^2}\oplus V_{1^4,321^4}^{\oplus 2}\oplus V_{1^4,2^41}^{\oplus 2}\oplus V_{1^4,2^31^3}^{\oplus 3}\oplus V_{1^4,2^21^5}^{\oplus 3}\oplus V_{1^4,21^7}^{\oplus 2}\oplus V_{1^4,1^9}^{\oplus 2}\oplus V_{1^3,431}^{\oplus 3}\oplus V_{1^3,421^2}\oplus V_{1^3,4^2}^{\oplus 2}\oplus V_{1^3,3^22}\oplus V_{1^3,3^21^2}^{\oplus 8}\oplus V_{1^3,32^21}^{\oplus 5}\oplus V_{1^3,321^3}^{\oplus 7}\oplus V_{1^3,31^5}\oplus V_{1^3,2^31^2}^{\oplus 7}\oplus V_{1^3,2^21^4}^{\oplus 11}\oplus V_{1^3,21^6}^{\oplus 6}\oplus V_{1^3,2^4}^{\oplus 5}\oplus V_{1^3,1^8}^{\oplus 5}\oplus V_{1^2,43}\oplus V_{1^2,421}\oplus V_{1^2,3^21}^{\oplus 6}\oplus V_{1^2,32^2}^{\oplus 3}\oplus V_{1^2,321^2}^{\oplus 11}\oplus V_{1^2,31^4}^{\oplus 4}\oplus V_{1^2,2^31}^{\oplus 11}\oplus V_{1^2,2^21^3}^{\oplus 18}\oplus V_{1^2,21^5}^{\oplus 13}\oplus V_{1^2,1^7}^{\oplus 9}\oplus V_{1,321}^{\oplus 5}\oplus V_{1,31^3}^{\oplus 3}\oplus V_{1,3^2}^{\oplus 2}\oplus V_{1,2^21^2}^{\oplus 15}\oplus V_{1,21^4}^{\oplus 14}\oplus V_{1,2^3}^{\oplus 5}\oplus V_{1,1^6}^{\oplus 12}\oplus V_{\emptyset ,32}\oplus V_{\emptyset ,2^21}^{\oplus 4}\oplus V_{\emptyset ,21^3}^{\oplus 5}\oplus V_{\emptyset ,1^5}^{\oplus 7}
  $}
\end{align}
\endgroup

\printbibliography
\end{document}